\DeclareMathAlphabet{\curly}{U}{rsfs}{m}{n}  
\newtheorem{theorem}{Theorem}[section]
\newtheorem{lemma}[theorem]{Lemma}
\newtheorem{proposition}[theorem]{Proposition}
\newtheorem{corollary}[theorem]{Corollary}
\definecolor{red}{rgb}{1,0,0}
\definecolor{orange}{rgb}{0.7,0.3,0}
\definecolor{blue}{rgb}{.2,.6,.75}
\definecolor{green}{rgb}{.0,.6,.2}
\theoremstyle{definition}
\theoremstyle{remark}
\newtheorem{remark}{Remark}[section]
\newcommand{\be}{\begin{equation}}
\newcommand{\ee}{\end{equation}}
\renewcommand{\leq}{\leqslant}
\renewcommand{\geq}{\geqslant}
\def\R{\mathbb{R}}
\def\Z{\mathbb{Z}}
\def\E{\mathbb{E}}
\def\P{\mathbb{P}}
\def\N{\mathbb{N}}
\def\X{\mathbf{X}}
\def\a{\mathbf{a}}
\def\b{\mathbf{b}}
\def\c{\mathbf{c}}
\def\d{\mathbf{d}}
\def\k{\mathbf{k}}
\def\DD{\curly D}
\newcommand{\CF}{\mathcal{F}}
\newcommand{\CG}{\mathcal{G}}
\newcommand{\CH}{\mathcal{H}}
\newcommand{\CP}{\mathcal{P}}
\newcommand{\CQ}{\mathcal{Q}}
\newcommand{\CS}{\mathcal{S}}
\def\eps{\varepsilon}
\def\tsum{{\textstyle\sum}}
\newcommand{\x}{\mathbf{x}}
\def\G{\mathcal{G}}
\def\B{\mathcal{B}}
\def\Vol{\textup{Vol}}
\def\bxi{\boldsymbol\xi}
\newcommand{\bs}\boldsymbol{}
\newcommand{\eq}[2]{ \begin{equation}\label{#1}\begin{split} #2 \end{split} \end{equation} }
\newcommand{\al}[1]{\begin{align} #1 \end{align} }
\newcommand{\als}[1]{\begin{align*} #1 \end{align*} }
\newcommand{\nn}{\nonumber \\}
\newcommand{\dee}{\mathrm{d}}
\newcommand{\lam}{\lambda}
\renewcommand{\pmod}[1]{\allowbreak\mkern7mu({\operator@font mod}\,\,#1)}
\newcommand{\ssum}[1]{\sum_{\substack{#1}}}  
\renewcommand{\(}{\left(}
\renewcommand{\)}{\right)}
\newcommand{\pfrac}[2]{\left(\frac{#1}{#2}\right)}  
\newcommand{\order}{\asymp}      
\newcommand{\fl}[1]{{\ensuremath{\left\lfloor {#1} \right\rfloor}}}
\renewcommand{\le}{\leqslant}
\renewcommand{\ge}{\geqslant}
\newcommand{\cS}{\mathcal{S}}  
\newcommand{\cA}{\mathcal{A}}
\newcommand{\sL}{\mathscr{L}}
\newcommand{\mtr}[1]{ \left( \begin{matrix} #1 \end{matrix} \right) }
\newenvironment{romenumerate}{\begin{enumerate}  

 }{\end{enumerate}}
\numberwithin{equation}{section}
\begin{document}

\begin{frontmatter}[classification=text]


\author[eber]{Sean Eberhard}
\author[ford]{Kevin Ford\thanks{Supported by National Science Foundation Grant DMS-1501982.}}
\author[kouk]{Dimitris Koukoulopoulos\thanks{Supported by the National Science and Engineering Research Council of Canada and by the Fonds de recherche du Qu\'ebec -- Nature et technologies.}}

\begin{abstract}
In the first paper in this series we estimated the probability that a random permutation $\pi\in\cS_n$ has a fixed set of a given size. In this paper, we elaborate on the same method to estimate the probability that $\pi$ has $m$ disjoint fixed sets of prescribed sizes $k_1,\dots,k_m$, where $k_1+\cdots+k_m=n$. We deduce an estimate for the proportion of permutations contained in a transitive subgroup other than $\cS_n$ or $\cA_n$. This theorem consists of two parts: an estimate for the proportion of permutations contained in an imprimitive transitive subgroup, and an estimate for the proportion of permutations contained in a primitive subgroup other than $\cS_n$ or $\cA_n$. 
\end{abstract}
\end{frontmatter}

%
%
%
%
%
%
%
%
%

\tableofcontents

\section{Introduction}

In the first paper~\cite{EFG1} in this series we showed that the proportion $i(n,k)$ of permutations $\pi\in\cS_n$ having some fixed set of size $k$ is of order $k^{-\delta} (1+\log k)^{-3/2}$ uniformly for $1\leq k \leq n/2$, where $\delta = 1 - 1/(\log 2) - (\log\log 2)/(\log 2)$. If $n$ is even, it follows that the proportion of $\pi\in\cS_n$ contained in a transitive subgroup other than $\cS_n$ or $\cA_n$ is at least $cn^{-\delta}(\log n)^{-3/2}$ for some constant $c>0$. In that paper we stated our belief that a matching upper bound holds, and that stronger upper bounds hold for odd $n$. The purpose of the present paper is to prove this. Specifically, we prove the following theorem.

Here and throughout the paper the notation $X\order Y$ means that $c_1 Y \leq X \leq c_2 Y$ for some constants $c_1,c_2>0$. We will also use $X\ll Y$ to mean $X \leq cY$ for some constant $c$, as well as standard $O(\cdot)$ and $o(\cdot)$ notation.


\begin{theorem}\label{trans}
Let $T(n)$ be the proportion of $\pi\in\cS_n$ contained in a transitive subgroup other than $\cS_n$ or $\cA_n$, and let $p$ be the smallest prime factor of $n$. Then
\[
  T(n) \order
  \begin{cases}
    n^{-\delta_2}(\log n)^{-3/2} & \text{if}~p=2,\\
    n^{-\delta_3}(\log n)^{-3/2} & \text{if}~p=3,\\
    n^{-1+1/(p-1)} &\text{if $5\le p\ll1$},\\
    n^{-1+o(1)} & \text{if $p\to\infty$},
  \end{cases}
\]
where 
\als{
\delta_m 
	&= \int_1^{(m-1)/\log m} (\log t) \dee t \\
	&= 1 - \frac{m-1}{\log m} + \frac{(m-1)\log(m-1)}{\log m} - \frac{(m-1)\log\log m}{\log m}.
}\end{theorem}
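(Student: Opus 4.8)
The plan for proving Theorem~\ref{trans} is to bound $T(n)$ above by $T_{\mathrm{imp}}(n)+T_{\mathrm{prim}}(n)$ and below by $\max(T_{\mathrm{imp}}(n),T_{\mathrm{prim}}(n))$, where $T_{\mathrm{imp}}(n)$ (resp.\ $T_{\mathrm{prim}}(n)$) denotes the proportion of $\pi\in\cS_n$ lying in some imprimitive (resp.\ primitive) transitive subgroup other than $\cS_n$ or $\cA_n$. The bulk of the work is the imprimitive part, which I would first turn into a purely combinatorial problem: a permutation preserving a partition of $\{1,\dots,n\}$ into $m$ equal blocks lies in the transitive group $\cS_{n/m}\wr\cS_m$, and conversely a transitive imprimitive group preserves such a partition, so $\pi$ lies in an imprimitive transitive proper subgroup iff $\pi$ preserves some partition into $m$ equal blocks for a divisor $m\mid n$ with $1<m<n$. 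This depends only on the cycle type of $\pi$, and one checks (the only non-obvious direction being handled by distributing each cycle of $\pi$ cyclically among the blocks of the relevant orbit of the block permutation) that it is equivalent to: for some $m\mid n$ with $1<m<n$ and some partition $(\ell_1,\dots,\ell_r)$ of $m$, the set $\{1,\dots,n\}$ can be split into $\pi$-invariant sets $V_1,\dots,V_r$ with $|V_i|=\ell_i n/m$ and with every cycle of $\pi$ inside $V_i$ of length divisible by $\ell_i$ (the $\ell_i$ being the cycle lengths of the induced block permutation).

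Hence $T_{\mathrm{imp}}(n)\ll\sum_{m\mid n}\sum_{(\ell_1,\dots,\ell_r)\vdash m}\P(V_1,\dots,V_r\text{ as above exist})$, with each summand also bounding $T(n)$ from below. To evaluate a summand I would combine two inputs. First, the elementary fact that the proportion of permutations of $N$ points all of whose cycle lengths are divisible by $e$ is $[x^N](1-x^e)^{-1/e}\order N^{-1+1/e}$ when $e\mid N$, which controls the ``divisibility'' pieces. Second --- and this is the technical heart of the paper, elaborating the single--fixed-set analysis of~\cite{EFG1} --- the sharp, uniform estimate for the probability that $\pi$ possesses disjoint fixed sets of prescribed sizes, which for $m$ disjoint fixed sets of the common size $n/m$ is $\order n^{-\delta_m}(\log n)^{-3/2}$. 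Optimising the double sum (only $n^{o(1)}$ divisors occur, and it is dominated by one leading term after routine estimates), the winning configuration is $m=p$ with induced block permutation the identity --- i.e.\ $p$ disjoint fixed sets of size $n/p$, contributing $n^{-\delta_p}(\log n)^{-3/2}$ --- when $p\in\{2,3\}$, and $m=p$ with induced block permutation a $(p-1)$-cycle fixing one block --- i.e.\ a fixed set of size $n/p$ whose complement has all cycle lengths divisible by $p-1$, contributing $n^{-1+1/(p-1)}$ --- when $p\ge5$; at $p=2$ these two coincide, and the crossover reflects $\delta_3<\tfrac12$ versus $\delta_p>\tfrac{p-2}{p-1}$ for $p\ge5$, while for $p\to\infty$ both are $n^{-1+o(1)}$. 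The matching lower bounds are, for $p\in\{2,3\}$, exactly the lower half of the fixed-set estimate (equivalently~\cite{EFG1} when $p=2$), and for $p\ge5$ a second-moment computation: the number of fixed sets $S$ with $|S|=n/p$ and all cycles of $\pi$ off $S$ divisible by $p-1$ has expectation $\order n^{-1+1/(p-1)}$ and, one shows, is not heavily concentrated, so it is positive with probability of the same order.

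For the primitive part I would invoke the O'Nan--Scott theorem together with Mar\'oti's bounds on orders of primitive groups: a primitive $G\ne\cS_n,\cA_n$ is either of bounded type with $|G|\le n^{O(\log n)}$ (negligible, since such $\pi$ realise only few cycle types and one counts permutations of each type), or of affine or product-action type. In the affine case $\pi$ is conjugate into $\mathrm{AGL}(a,q)$ with $n=q^a$ a prime power, which forces $\pi$ into one of very few highly restricted cycle types (cycle lengths of shape $p^{\,j}d$ with $d\mid q^{\,b}-1$), and the product-action case reduces to these; the outcome, refining {\L}uczak--Pyber, is $T_{\mathrm{prim}}(n)\le n^{-1+o(1)}$ (with equality for $n$ prime, via $n$-cycles in $\mathrm{AGL}(1,n)$). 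This is absorbed into $T_{\mathrm{imp}}(n)$ when $p\ll1$ and has the same order when $p\to\infty$, closing all four cases.

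The hard part is the sharp estimate for the probability of $m$ disjoint fixed sets of prescribed sizes --- specifically, getting the correct power $(\log n)^{-3/2}$ in the upper bound and enough uniformity in $m$ and in the size vector to carry out the sum over divisors and partitions; this is a substantial upgrade of the already delicate analysis of~\cite{EFG1}. Lesser obstacles are the CFSG bookkeeping behind the primitive bound and the concentration input for the $p\ge5$ lower bound.
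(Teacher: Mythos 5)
Your treatment of the imprimitive part is essentially the paper's: the same reduction from block systems to disjoint invariant sets with divisibility constraints on the cycle lengths (the quantities $i(n,\k,\d)$), the same two key inputs (the elementary $\asymp N^{-1+1/e}$ estimate and the sharp estimate $\asymp n^{-\delta_m}(\log n)^{-3/2}$ for $m$ disjoint fixed sets, which is indeed the technical heart), the same identification of the dominant configurations (all parts equal to $1$ for $p\in\{2,3\}$, the partition $(p-1,1)$ for $p\ge 5$), and the same second-moment/Cauchy--Schwarz lower bound for the $(p-1,1)$ configuration (Lemma~\ref{partition-d-1}). Be aware, though, that ``routine estimates'' understates the work for unbounded $\nu$: the paper needs a uniform-in-$\nu$ summation over all partitions of $\nu$ up to $\nu\le n/\log^2 n$, and for $\nu=n^{1-o(1)}$ its tools fail and it imports a generating-function bound from \cite{dfg08}; these ranges must be controlled before the divisor sum can be declared dominated by the $\nu=p$ term.

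The genuine gap is in your primitive paragraph. First, the trichotomy you invoke is misstated: in Mar\'oti-type bounds the affine groups are \emph{inside} the class $|G|\le n^{O(\log n)}$, and the exceptional large primitive groups (besides $\cS_n,\cA_n$ and Mathieu) are the subset-action and product-action groups built from $\cS_m/\cA_m$; your sketch never addresses the subset-action case at all. Second, for the small-order class the argument ``such $\pi$ realise only few cycle types and one counts permutations of each type'' does not deliver $n^{-1+o(1)}$: a group of order $n^{O(\log n)}$ can realise far more than $n^{o(1)}$ cycle types, a single realisable type (e.g.\ $n$-cycles) already has probability $\asymp 1/n$, and even granting few types one must show every realisable type has probability $n^{-1+o(1)}$ -- which is the whole difficulty and is not supplied. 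The paper avoids this entirely by using Bovey's minimal-degree estimate (Theorem~\ref{bovey}) together with Liebeck--Saxl (Corollary~\ref{liebecksaxlcor}), so that only the subset- and product-action cases survive. Third, ``the product-action case reduces to these [affine]'' is false, and that case is precisely the critical one: for $\pi\in\cS_m\wr\cS_r$ acting on $\{1,\dots,m\}^r$ with trivial top component, the paper observes that $\pi$ preserves a system of $n^{1/r}$ blocks and then inserts the new bound $I(n,n^{1/r})\ll n^{-1}$ from Theorem~\ref{imprimitive} (nontrivial top components are handled by the cycle-counting Lemma~\ref{caseiii}). This insertion is exactly what improves the $n^{-2/3+o(1)}$ of \cite{dfg08} to $n^{-1+o(1)}$, and it is absent from your plan; as written, your primitive bound would not reach $n^{-1+o(1)}$, and hence the $p\to\infty$ case (and the upper bounds in all cases) would not close.
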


\noindent
We record here the first few values of the sequence $\delta_m$ for easy reference:
\[
  \delta_2 = 0.08607\dots,  
  \quad \delta_3 = 0.27017\dots ,
  \quad
  \delta_4 = 0.50655\dots, 
  \quad \delta_5 = 0.77733\dots.
\]


The theorem that $T(n)\to0$ as $n\to\infty$ is due to {\L}uczak and Pyber~\cite{lp93}, whose method can be used to prove $T(n) = O(n^{-c})$ for some small $c>0$. This theorem has been widely hailed in the literature and has seen several applications: see for example Cameron and Kantor~\cite{camkan} for an application to the group generated by the first two rows of a random Latin square, Babai and Hayes~\cite{babhay} for an application to generating the symmetric group with one random and one fixed generator, Diaconis, Fulman, and Guralnick~\cite{dfg08} for an application to counting derangements in arbitrary actions of the symmetric group, and Kowalski and Zywina~\cite{kowzyw} and Eberhard, Green, and Ford~\cite{EFG2} for applications to invariable generation. The rate of decay of $T(n)$ had remained somewhat of a mystery, however, and this question was emphasized by Cameron and Kantor as well as by Babai and Hayes. Theorem~\ref{trans} therefore fills a rather large gap in our understanding of the subgroup structure of the symmetric group.

Theorem~\ref{trans} is actually a composite of two theorems, one about imprimitive transitive subgroups and one about primitive subgroups. Recall that a subgroup $H\leq \cS_n$ is called \emph{imprimitive} if it preserves some nontrivial partition of $\{1,\dots,n\}$ into blocks. If $H$ is transitive, then the blocks of such a partition must all have the same size. Therefore, if $I(n)$ is the proportion of $\pi\in\cS_n$ contained in an imprimitive transitive subgroup, and $I(n,\nu)$ is the proportion of $\pi\in\cS_n$ preserving some partition of $\{1,\dots,n\}$ into $\nu$ blocks of size $n/\nu$, then
\[
  I(n) \leq \ssum{\nu\mid n \\ 1<\nu<n} I(n,\nu).
\]
On the other hand, if $H$ does not preserve a nontrivial partition of $\{1,\dots,n\}$, then $H$ is called \emph{primitive}. Let $P(n)$ be the proportion of $\pi\in\cS_n$ contained in a primitive subgroup other than $\cS_n$ or $\cA_n$. We prove the following estimates for $I(n)$ and $P(n)$.


\begin{theorem}\label{imprimitive}
Let $\nu$ be a divisor of $n$. Then
\[
	I(n,\nu)\order
	\begin{cases}
	  n^{-\delta_\nu}(\log n)^{-3/2} & \text{if}~1<\nu\leq 4,\\
	  n^{-1+1/(\nu-1)} & \text{if}~5\leq \nu \leq \log n,\\
	  n^{-1} & \text{if}~\log n \leq \nu \leq n/\log n,\\
	  n^{-1+\nu/n} & \text{if}~n/\log n \leq \nu<n .
	\end{cases}
\]
Thus, if $n$ is composite and $p$ is the smallest prime factor of $n$, 
then $I(n)\order I(n,p) + n^{-1+O(1/\log\log n)}$, with $I(n,p)$ as above. 
\end{theorem}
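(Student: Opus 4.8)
My starting point is that ``$\pi$ preserves some partition of $\{1,\dots,n\}$ into $\nu$ blocks of size $k:=n/\nu$'' is invariant under conjugation, so $I(n,\nu)$ is the probability that the cycle type of a uniform $\pi\in\cS_n$ lies in a certain combinatorial class, which I would first make explicit. If $\pi$ preserves such a partition and permutes the blocks according to $\sigma\in\cS_\nu$, then each length-$\ell$ cycle of $\sigma$ gives a $\pi$-invariant union of $\ell$ blocks --- a set of size $\ell k$ all of whose inner $\pi$-cycles have length divisible by $\ell$ --- and conversely any family of disjoint $\pi$-invariant sets $U_1,\dots,U_r$ of sizes $\ell_1 k,\dots,\ell_r k$ (with $\sum\ell_i=\nu$) all of whose inner $\pi$-cycles have lengths divisible by the corresponding $\ell_i$ can be cut into blocks (each length-$\ell_i m$ cycle in $U_i$ breaks into $\ell_i$ ``stride-$m$'' arcs, which become the blocks). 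Hence $\pi$ preserves a $\nu$-block partition iff its cycle lengths can be grouped so that the $i$-th group admits an integer $s_i\ge1$ dividing every length in it, the group's lengths summing to $s_i k$; the $s_i$ form the cycle type $\lambda\vdash\nu$ of the block permutation.

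\textbf{Upper bound.} I would union-bound over $\lambda\vdash\nu$. Writing $j$ for the number of parts of $\lambda$ equal to $1$, split $\{1,\dots,n\}$ into the union $P$ of the parts of size $\ge2$ (total size $(\nu-j)k$) and its complement $Q$ (size $jk$); union-bounding over the choice of $P$, the binomial coefficient counting placements cancels against $\Pr[P\text{ is }\pi\text{-invariant}]$. The condition on $Q$ is then controlled by the multiple--fixed-set estimate of this paper, $\Pr_{\cS_{jk}}[j\text{ disjoint fixed }k\text{-sets}]\order n^{-\delta_j}(\log n)^{-3/2}$ for $j\ge2$ (and $\order1$ for $j\le1$); the condition on $P$ factors, after conditioning on which cycles of $\pi$ lie over which part, into $\prod_{\ell_i\ge2}\Pr_{\cS_{\ell_i k}}[\text{all cycles divisible by }\ell_i]$, each factor being the classical coefficient $[x^{\ell_i k}](1-x^{\ell_i})^{-1/\ell_i}\order(\ell_i k)^{-1+1/\ell_i}$ (uniformly for $\ell_i\le\log n$). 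Thus $\lambda$ contributes $\order n^{-\delta_j}(\log n)^{-3/2}\prod_{\ell_i\ge2}n^{-1+1/\ell_i}$. Optimising over $\lambda$: $(1^\nu)$ wins for $\nu\le4$ (since $\delta_\nu<1-1/(\nu-1)$ there, by the values $\delta_2=0.086\dots,\delta_3=0.270\dots,\delta_4=0.506\dots$), giving $n^{-\delta_\nu}(\log n)^{-3/2}$; $(\nu-1,1)$ wins for $5\le\nu\le\log n$ (since $\delta_5=0.777\dots>3/4$, and $\tfrac1{(\nu-2)(\nu-1)}<\delta_2$ for $\nu\ge5$ together with the analogous checks rule out all competing $\lambda$), giving $n^{-1+1/(\nu-1)}$; and for $\nu\ge\log n$ the divisibility-and-sum constraints become restrictive enough that the total collapses to $\order n^{-1}$ for $\nu\le n/\log n$ and to $\order n^{-1+\nu/n}$ for $\nu\ge n/\log n$ (the least divisor available for the $\gcd$-constraints being essentially $k$ in the last range). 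Since $\#\{\lambda\vdash\nu\}=n^{o(1)}$ for $\nu\le\log n$ the union bound is lossless there; for larger $\nu$ one must argue directly from the combinatorial condition rather than enumerate block-permutation types.

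\textbf{Lower bounds, and the main obstacle.} For each regime I would exhibit one explicit family. For $\nu\le4$: permutations with $\nu$ disjoint fixed $k$-sets, a $\order n^{-\delta_\nu}(\log n)^{-3/2}$ proportion by the multiple--fixed-set estimate again. For $\log n\le\nu\le n/\log n$: permutations all of whose cycle lengths are divisible by $\nu$ --- each preserves a $\nu$-block partition by the stride construction --- a $\order n^{-1+1/\nu}\order n^{-1}$ proportion since $n^{1/\nu}\order1$ here. For $n/\log n\le\nu<n$: permutations all of whose cycle lengths are divisible by $k$, a $\order n^{-1+1/k}=n^{-1+\nu/n}$ proportion (uniformly for $k\le\log n$). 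The remaining range $5\le\nu\le\log n$ is the delicate one: here one wants $\pi$ carrying an invariant set of size $(\nu-1)k$ with all inner cycles divisible by $\nu-1$ (the block permutation being a $(\nu-1)$-cycle together with a fixed point). The union bound over such invariant sets already yields the matching upper bound $n^{-1+1/(\nu-1)}$, so the point is to show these events are non-overlapping enough to force a lower bound of the same order --- equivalently, to run the generating-function/Poisson machinery of the paper directly on this event. I expect this to be the main obstacle: one must either isolate a near-disjoint subfamily achieving $n^{-1+1/(\nu-1)}$ or estimate the event exactly, all the while confirming --- with the numerical margins that the values $\delta_m$ supply --- that no other block-permutation type contributes as much.

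\textbf{Deduction of the bound for $I(n)$.} This follows from the displayed inequality $I(n)\le\sum_{\nu\mid n,\,1<\nu<n}I(n,\nu)$. The least divisor of $n$ exceeding $1$ is its least prime factor $p$; since $\delta_\nu$ is increasing and the quantities $n^{-1+1/(\nu-1)}$ decay (geometrically while $\nu$ is bounded, and are each $\ll n^{-1}$ up to an $O(1)$ factor once $\nu$ approaches $\log n$), the terms with $1<\nu\le\log n$ sum to $\order I(n,p)$. Each divisor with $\log n\le\nu\le n/\log n$ contributes $\order n^{-1}$ and each with $n/\log n\le\nu<n$ contributes $\le n^{-1+\nu/n}\le n^{-1/2}$ (as then $\nu\le n/p\le n/2$), and there are only $d(n)=n^{O(1/\log\log n)}$ divisors in all, so together these contribute $n^{-1+O(1/\log\log n)}$; combined with $I(n)\ge I(n,p)$ and $I(n)\ge I(n,\nu)\order n^{-1}$ for a divisor $\nu\in[\log n,n/\log n]$ (which exists whenever $p>\log n$, while if $p\le\log n$ then $I(n,p)$ already dominates the error term), this gives $I(n)\order I(n,p)+n^{-1+O(1/\log\log n)}$.
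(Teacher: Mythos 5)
Your skeleton is the same as the paper's: reduce to the cycle type $\lambda$ of the induced block permutation, which is exactly the relation \eqref{Innubound}; bound each type by the product of a $d$-divisibility factor $\order (\ell k)^{-1+1/\ell}$ for each part $\ell\ge2$ and the multiple--fixed-set estimate of Theorem~\ref{main} for the singleton parts; identify $(1,\dots,1)$ as dominant for $\nu\le4$ and $(\nu-1,1)$ for $\nu\ge5$ (the paper's Proposition~\ref{dominantpart}, including the delicate comparison $\delta_4=0.506\ldots<\tfrac12+\delta_2=0.508\ldots$); and deduce the $I(n)$ statement from the divisor bound. However, there are two genuine gaps, and they sit exactly where the paper has to do real work. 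First, the lower bound in the range $5\le\nu\le\log n$ (and already for each fixed $\nu\ge5$): you correctly identify that one needs $i\bigl(n,((\nu-1)n/\nu,\,n/\nu),(\nu-1,1)\bigr)\gg n^{-1+1/(\nu-1)}$ and you leave it as ``the main obstacle.'' The paper closes it in Lemma~\ref{partition-d-1} by a second-moment argument: with $X$ the number of invariant sets of size $(\nu-1)n/\nu$ all of whose cycles are $(\nu-1)$-divisible, one has $\E X\order k_1^{-1+1/(\nu-1)}$, and summing over the possible overlap sizes of two such sets (the intersection and both differences must again be invariant with $(\nu-1)$-divisible cycles) gives $\E X^2\ll k_1^{-1+1/(\nu-1)}$, whence $\P(X>0)\ge(\E X)^2/\E X^2$ yields the matching lower bound. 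Without this (or a substitute such as an explicit near-disjoint family, which you do not construct), you have only the upper bound in that range.

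Second, the upper bound for large $\nu$ is asserted rather than proved. For $\nu\ge n/\log^2 n$ the union bound over block-permutation types is abandoned in your sketch (``argue directly from the combinatorial condition''), but no argument is supplied; the paper instead invokes the generating-function bound of Diaconis--Fulman--Guralnick \cite[Theorem~6.3(1)]{dfg08}, compares the relevant exponential generating function with $(1-z)^{-1/s}\exp\bigl(\sum_{k\ge1}z^k/k^2\bigr)$ where $s=n/\nu$, and extracts the $z^\nu$ coefficient via Lemma~\ref{L2lem}, giving Proposition~\ref{largenu}. Moreover, in the intermediate range your per-type bound applies Theorem~\ref{main} with $j$ singleton parts where $j$ can grow with $n$, but the implied constants in that theorem depend on the number of parts, so the bound is not uniform; also the number of types is $e^{O(\sqrt{\nu})}$, which is far larger than $n^{o(1)}$ once $\nu\gg(\log n)^2$, so the sum over types must be organized, not crudely union-bounded. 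The paper's Proposition~\ref{intermediatenu} handles both points by bounding the singleton parts in pairs through the two-set estimate $(cn/\nu)^{-\delta_2}$ (uniform in $\nu$), using $d_m\ge\nu/m$ for the largest part, and summing over partitions grouped by their number of parts. These computations, together with the second-moment lemma above, are the substance missing from your proposal; the reduction, the dominant-type identification, the lower-bound constructions in the other ranges, and the final deduction for $I(n)$ all agree with the paper.
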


\begin{remark}
The term $n^{-1+O(1/\log\log n)}$ cannot be completely removed. In Remark \ref{lb-rem}, we construct integers $n$ for which 
\[
 I(n) \gg   \frac{\log n}{\log\log n} I(n,p).
\]
\end{remark}


\begin{theorem}\label{primitive}
$P(n) \leq n^{-1+o(1)}$.
\end{theorem}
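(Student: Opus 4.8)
The plan is to combine the O'Nan--Scott theorem with the classification of finite simple groups, in the spirit of {\L}uczak and Pyber~\cite{lp93} but pushing the cycle-structure bookkeeping further. First I would reduce to \emph{maximal} subgroups: a primitive subgroup $G\ne\cA_n,\cS_n$ cannot lie in an intransitive maximal subgroup $\cS_k\times\cS_{n-k}$ or in an imprimitive one $\cS_a\wr\cS_b$, so it is contained in a primitive maximal subgroup $M\ne\cA_n,\cS_n$, whence $P(n)\le\frac1{n!}\bigl|\bigcup_M\bigcup_{g\in\cS_n}gMg^{-1}\bigr|$ over conjugacy-class representatives $M$. Since $\pi\in gMg^{-1}$ forces $\pi$ to have the cycle type of some element of $M$, for each $M$ it suffices to bound $\mathbb P\bigl(\pi\in\bigcup_g gMg^{-1}\bigr)\le\sum_{\lambda}\mathbb P(\pi\sim\lambda)$, the sum being over cycle types realised in $M$, by $n^{-1+o(1)}$ --- usually by far less. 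The O'Nan--Scott theorem and the order estimates of Mar\'oti, Liebeck and others then sort the candidates into: (i) almost simple of ``non-standard'' type, simple diagonal, twisted wreath, and product actions with a small component, all satisfying $|M|\le n^{O(\log\log n)}$; (ii) the subset actions $\cA_m,\cS_m$ on $k$-subsets with $n=\binom mk$ and $m>2k$ (and product actions of these); and (iii) the affine groups $\mathrm{AGL}(d,q)$, $n=q^d$, together with the subspace actions of $\mathrm{PGL}(d,q)$ and its relatives.

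Type (i) is dispatched in one stroke: every element of such an $M$ has order at most $n^{O(\log\log n)}$, so it is enough to know that the proportion of $\pi\in\cS_n$ with $\mathrm{ord}(\pi)\le n^{O(\log\log n)}$ is $n^{-1+o(1)}$; this holds because such a $\pi$ must be built from $O(\log\log n)$ cycles, or from a giant cycle plus $O(\log n)$ further points, which is a routine count --- directly, or via the generating-function machinery already developed for $i(n,k)$. For type (ii), although $|M|$ is large, the longest cycle of any $\sigma\in\cS_m$ has length at most $m<\binom mk$, so no $\sigma$ acts transitively on $k$-subsets; in fact a short argument gives $\sigma$ at least $\gg n/m$ orbits on $k$-subsets, so each realised $\lambda$ has $\mathbb P(\pi\sim\lambda)\le e^{-\Omega(\sqrt n\,\log n)}$, and since there are at most $e^{O(\sqrt m)}$ such $\lambda$ the whole contribution is $e^{-\Omega(\sqrt n)}$, negligible against $n^{-1}$.

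The real difficulty is type (iii) when $q$ is small: for $q=2$ one has $|M|\order n^{\log_2 n}$, so the order bound is useless --- a typical permutation already has order roughly $n^{(\log n)/2}$. Here one must understand the cycle type of an element $x\mapsto Ax+b$ of $\mathrm{AGL}(d,q)$ acting on $\F_q^d$ (and of $\mathrm{PGL}(d,q)$ on $\mathbb P^{d-1}(\F_q)$), which is governed by the rational canonical form of $A$, and show that --- apart from the ``near-Singer'' classes, for which a Singer cycle of $\mathrm{GL}(d,q)$ fixes $0$ and is a single $(q^d-1)$-cycle on $\F_q^d\setminus\{0\}$ and hence contributes $\order 1/n$ (the expected order of $P(n)$, cf.\ Theorem~\ref{trans} for $n$ prime) --- every conjugacy class of $\mathrm{GL}(d,q)$ contributes $o(n^{-1})$; combined with the routine fact that there are only $n^{o(1)}$ conjugacy classes of affine- and subspace-type primitive maximal subgroups of a given degree, this gives a total of $n^{-1+o(1)}$. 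I expect the main obstacle to be precisely this sharp cycle-type estimate for $\mathrm{GL}(d,q)$, uniform in $d$ and $q$: one must rule out any accumulation of linear (or affine, or subspace) maps on a cycle type that is more common for $\cS_n$ than the $\sim n^{-1}$ forced by a single long cycle, the worst regime being $q$ bounded with $d\to\infty$, where $|M|$ is superpolynomial in $n$.
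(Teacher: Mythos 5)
There is a genuine gap, and it sits exactly where you place it: your case (iii). Your plan reduces the theorem (after an O'Nan--Scott/order-bound triage) to a sharp, uniform-in-$d$-and-$q$ estimate for the cycle types of elements of $\mathrm{AGL}(d,q)$ on $\F_q^d$ and of classical groups on subspaces, with the worst regime $q$ bounded, $d\to\infty$, where $|M|$ is superpolynomial in $n$ and the order bound you use for case (i) is useless. You do not prove this estimate, nor cite anything that does; you explicitly flag it as ``the main obstacle''. Since this is the only case standing between you and the bound $n^{-1+o(1)}$, the proposal is a programme rather than a proof. A secondary soft spot is case (i): the claim that the proportion of $\pi\in\cS_n$ with $\mathrm{ord}(\pi)\le n^{O(\log\log n)}$ is $n^{-1+o(1)}$ is a lower-tail large-deviation statement (the typical order is about $n^{(\log n)/2}$) which is plausible but is asserted as ``routine'' without an argument; one must in particular control permutations with many cycles whose lengths have a collapsing lcm, not just the few-cycle configurations you describe.

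The paper's route shows that the hard case (iii) can be avoided altogether, which is the key idea you are missing. By Bovey's theorem (Theorem~\ref{bovey}), outside an event of probability $O_\eps(n^{-1+2\eps})$ some nontrivial power of $\pi$ moves at most $n^{1-\eps}$ points, so any primitive $G$ containing $\pi$ has minimal degree at most $n^{1-\eps}$. The Liebeck--Saxl classification (Theorem~\ref{liebecksaxl}, via Corollary~\ref{liebecksaxlcor}) then says such a $G$ is of subset/product-action type with bounded $k,r$ --- affine and subspace groups never appear, precisely because every nontrivial affine or projective transformation moves at least $(1-1/q)n\ge n/2$ points, so those groups have large minimal degree and are already covered by Bovey's bound; no cycle-type analysis of $\mathrm{GL}(d,q)$ is needed. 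The surviving cases are killed by cycle counting (Lemmas~\ref{caseii} and~\ref{caseiii}: such elements have $\gg n^{c}$ or $\gg m/r$ cycles, while a random permutation has $O((\log n)^2)$ cycles outside probability $O(n^{-100})$), leaving only elements of $\cS_m\wr\cS_r$ with trivial top part, which preserve a block system and are counted by $\sum_r I(n,n^{1/r})\ll n^{-1}$ using Theorem~\ref{imprimitive} --- this last insertion being exactly where the present paper improves the $n^{-2/3+o(1)}$ of \cite{dfg08}. If you want to salvage your approach, you would either have to prove the uniform $\mathrm{GL}(d,q)$ cycle-type estimate (a substantial project in its own right) or replace your case analysis by the minimal-degree argument above.
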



The theorem that $I(n)\to0$ as $n\to\infty$ is due to {\L}uczak and Pyber~\cite{lp93}. The somewhat older theorem that $P(n)\to 0$ as $n\to\infty$ is due to Bovey~\cite{bovey}, who proved the bound $P(n)\leq n^{-1/2+o(1)}$. More recently Bovey's estimate was improved to $P(n)\leq n^{-2/3+o(1)}$ by Diaconis, Fulman, and Guralnick~\cite[Section~7]{dfg08}, who also conjectured that $P(n)\leq O(n^{-1})$. In truth, $P(n)$ depends rather delicately on the arithmetic of $n$, and in fact $P(n)=0$ for almost all $n$ (see Cameron, Neumann, and Teague~\cite{cameron-neumann-teague}), but $O(n^{-1})$ would be the best possible bound which depends only on the size of $n$. For example if $n$ happens to be prime then every $n$-cycle generates a primitive subgroup; similarly, if $p = n-1$ is prime then every $n$-cycle is contained in a primitive subgroup isomorphic to $\textup{SL}_2(p)$. Our proof of the bound $n^{-1+o(1)}$ is essentially that of~\cite{dfg08}, except that we insert our new bound for $I(n,\nu)$ at a critical stage in the proof.

The proof of Theorem~\ref{imprimitive} is self-contained, except for a theorem we borrow from \cite{dfg08} to deal with $\nu$ of size $n^{1-o(1)}$. The proof of Theorem~\ref{primitive} on the other hand makes essential use of the classification of finite simple groups via work of Liebeck and Saxl~\cite{liebecksaxl} classifying primitive subgroups of small minimal degree (extended by Guralnick and Magaard~\cite{guralnickmagaard}).

The connection between $I(n,\nu)$ and $i(n,k)$ is easy to explain. Suppose $\pi$ preserves a partition of $\{1,\dots,n\}$ into $\nu$ blocks of size $n/\nu$. Then $\pi$ induces a permutation $\tilde\pi\in S_\nu$ on the set of blocks. If $\tilde\pi$ has cycle lengths $d_1,\dots,d_m$, then it follows that $\pi$ has disjoint fixed sets $A_1,\dots,A_m$ such that $|A_i| = d_in/\nu$ and such that all cycles of $\pi|_{A_i}$ are divisible by $d_i$. For example, assume that we have the permutation
\[
\pi = \mtr{
 1 & 2 & 3 &  4 & 5 & 6 & 7 & 8 & 9 \\
 4 & 5 & 6 &  1 & 2 & 3 & 8 & 9 & 7 
},
\]
counted by $I(9,3)$, since it permutes the blocks $\{1,2,3\}$, $\{4,5,6\}$ and $\{7,8,9\}$. Then the induced permutation $\tilde{\pi}$ is the permutation $\mtr{
 1 & 2 & 3 \\ 2 & 1 & 3},$ whose cycle lengths are 2 and 1. We may then take $A_1=\{1,2,3,4,5,6\}$ and $A_2=\{7,8,9\}$, which are both fixed subsets of $\pi$. In addition, $\pi|_{A_1}=(1\,4)(2\,5)(3\,6)$ consists only of 2-divisible cycles.

The converse to the above relation holds as well : if $\pi$ has disjoint fixed sets $A_1,\dots,A_m$ such that $|A_i| = d_in/\nu$ and such that all cycles of $\pi|_{A_i}$ are divisible by $d_i$, then $\pi$ preserves a system of $\nu$ blocks of size $n/\nu$. We are thus naturally led to the following definition: for $\k=(k_1,\dots,k_m)$ such that $\sum_{i=1}^m k_i = n$ and $\d=(d_1,\dots,d_m)$, let $i(n,\k,\d)$ be the proportion of $\pi\in\cS_n$ having disjoint fixed sets $A_1,\dots,A_m$ such that $|A_i|=k_i$ and such that all cycles of $\pi|_{A_i}$ are divisible by $d_i$. Then we have
\eq{Innubound}{
  \max_{d_i} i(n,(d_i n/\nu)_i,(d_i)_i) \leq I(n,\nu) \leq \sum_{d_i} i(n,(d_in/\nu)_i,(d_i)_i),
}
where the max and sum run over partitions $(d_1,\dots,d_m)$ of $\nu$. Thus, at least for small $\nu$, it suffices to understand $i(n,\k,\d)$.

Moreover, it turns out that the only nontrivial case for which we need sharp bounds is the case in which $d_i=1$ for each $i$. In this case we write just $i(n,\k)$ for $i(n,\k,\d)$: this is simpy the proportion of permutations $\pi$ having disjoint fixed sets of sizes $k_1,\dots,k_m$. Our main task therefore is to establish the following estimate for $i(n,\k)$. Note that because $i(n,k) = i(n,(k,n-k))$, this generalizes the main result of~\cite{EFG1}.

\begin{theorem}\label{main}
Let $m\geq 2$ and assume $2\leq k_1\leq \cdots \leq k_m$ and $\sum_{i=1}^m k_i = n$. Then
\[
  i(n,\k) \ll_m (k_{m-1}/k_1)^m k_1^{-\delta_m} (\log k_1)^{-3/2}  .
\]
Moreover, if $k_{m-1} \leq c k_1$ then
\[
  i(n,\k) \order_{m,c} k_1^{-\delta_m} (\log k_1)^{-3/2}.
\]
In particular, if $k_i \order_m n$ for each $i$ then
\[
  i(n,\k) \order_m n^{-\delta_m} (\log n)^{-3/2}.
\]
\end{theorem}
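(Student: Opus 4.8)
The plan is to extend the method of~\cite{EFG1} --- which is exactly the case $m=2$, since $i(n,k)=i(n,(k,n-k))$ --- to the multi-target setting, establishing the upper bound and the matching lower bound (under $k_{m-1}\le ck_1$) separately. First, a reformulation: $\pi\in\cS_n$ has disjoint fixed sets of sizes $k_1,\dots,k_m$ precisely when the multiset of cycle lengths of $\pi$ splits into $m$ sub-multisets with sums $k_1,\dots,k_m$; as the total is $n$, this is equivalent to the existence of disjoint sub-multisets $S_1,\dots,S_{m-1}$ with $\sum S_i=k_i$ (the remaining cycles then sum to $k_m$ automatically).

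Next I would pass to an independent model: fix a threshold $y$ (a suitable power of $k_1$) and separate short cycles (length $\le y$) from long ones. By the classical coupling of cycle counts with independent Poisson variables, the law of the short-cycle counts $(C_j)_{j\le y}$ is, up to a negligible error, that of independent variables with $C_j\sim\CPois(1/j)$; cycles of length $>k_{m-1}$ can only lie in the largest bucket and are therefore inert, while the cycles of length in $(y,k_{m-1}]$ are few, and their lengths and bucket assignments can be enumerated at a cost I expect to absorb into the factor $(k_{m-1}/k_1)^m$. The task then reduces to estimating, in the Poisson model, the probability that the short-cycle length multiset contains disjoint sub-multisets summing to the $m-1$ residual targets.

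For the upper bound, the key structural fact is that bucket $i$ can receive only cycles of length $\le k_i$; in particular the smallest bucket is fed solely by cycles of length $\le k_1$. Supplying the combined length $\asymp(m-1)k_1$ demanded by the $m-1$ smallest buckets from such cycles forces the cycle structure to be atypical at many scales below $k_1$: across $\asymp\log_m k_1$ geometrically spaced scales the number of cycles must behave as though its mean were inflated by the factor $\theta_m:=(m-1)/\log m$. The Poisson large-deviation rate for inflating a mean-$\log m$ count to mean $\theta_m\log m=m-1$ is $\log m\cdot I(\theta_m)$, where $I(\theta)=\theta\log\theta-\theta+1$; since $\delta_m=\int_1^{\theta_m}\log t\,\dee t=I(\theta_m)$, accumulating this over $\asymp\log_m k_1$ scales yields the factor $k_1^{-\delta_m}$, while the constraint that the running excess of the cycle counts stay nonnegative and return near zero --- a random-walk bridge conditioned to be positive --- contributes the $(\log k_1)^{-3/2}$. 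The step I expect to be the main obstacle is making this picture quantitatively exact, so as to recover the precise constant $\delta_m$ and the exponent $3/2$ rather than merely $k_1^{-\delta_m+o(1)}$: as in~\cite{EFG1}, this requires a concentration-function estimate for the multi-target subset sums of the short cycles (bounding, for each admissible skeleton of larger cycles, the probability that the short cycles exactly complete the residual targets), combined with the local limit theorem for the conditioned walk.

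For the lower bound, with $k_{m-1}\le ck_1$, I would exhibit explicit cycle profiles. Fix scales $m^0<m^1<\cdots<m^J$ with $m^J\asymp k_1$ and require $\pi$ to have, in each block $[m^j,m^{j+1})$, an $O_m(1)$-sized family of cycles whose lengths are chosen so that their subset sums form a net fine enough to adjust a running total up to an error handled at the next scale. A greedy top-down construction then produces $A_1,\dots,A_{m-1}$ (and $A_m$ is the rest): assign cycles scale by scale from $m^J$ downward, keeping each partial target close to its goal and correcting below; the hypothesis $k_{m-1}\le ck_1$ makes all targets comparable, so one family of scales serves every bucket. The probability of such a profile is $\gg_{m,c}k_1^{-\delta_m}(\log k_1)^{-3/2}$ by a direct computation in the Poisson model --- the exponent from the per-scale large-deviation cost, the $(\log k_1)^{-3/2}$ from counting the bridge-like count profiles --- matching the upper bound. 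Finally, the three displayed consequences follow by combining: the general upper bound is the first assertion; together with the lower bound under $k_{m-1}\le ck_1$ it gives the $\order_{m,c}$ estimate; and taking all $k_i\order_m n$ (so $k_{m-1}/k_1\order_m 1$ and $k_1\order_m n$) gives the last.
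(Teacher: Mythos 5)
Your heuristic correctly locates the two sources of the answer --- the exponent $\delta_m$ from a Poisson large-deviation cost spread over $\asymp\frac{\log k_1}{\log m}$ scales, and the $(\log k_1)^{-3/2}$ from a ballot/bridge phenomenon --- but as a proof it defers exactly the steps that are the crux. For the upper bound you reduce to bounding, ``for each admissible skeleton of larger cycles, the probability that the short cycles exactly complete the residual targets'' via a concentration-function estimate plus a local limit theorem. No such sharp local estimate is supplied, and this is precisely the quantity the paper never estimates pointwise: it only proves that the naive local prediction $\P(\k\in\sL_m(\c))\approx|\sL_m(\c)|/k^{m-1}$ holds \emph{on average} (Proposition~\ref{local-global}), and the localization is absorbed by a different device --- the longest cycle of each part is treated as a free variable ranging over $\asymp|\sL_m(\c)|$ admissible lengths, which converts the hitting event into the global quantity $\E|\sL_m(\X_k)|$, subsequently bounded through the order-statistics majorant $G(\a)$ and \cite[Lemma~4.4]{kouk1}. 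This is also how the $m=2$ case in \cite{EFG1} is handled, so the appeal ``as in \cite{EFG1}'' does not point to an existing local-limit argument. Without that missing local input, your scale-by-scale accounting yields only $k_1^{-\delta_m+o(1)}$, not the stated bound; and the claim that the cycles of intermediate length can be enumerated at a cost absorbed into $(k_{m-1}/k_1)^m$ is asserted rather than proved (in the paper this factor emerges from the $Q(\c)^{-2m}$ moment bound of Lemma~\ref{lem-sum-gen-higherdim}(c) inside the proof of Proposition~\ref{local-global}).

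The lower bound has the same gap in a different guise. Your greedy multi-scale construction needs (i) that the admissible count profiles carry probability $\gg_{m,c}k_1^{-\delta_m}(\log k_1)^{-3/2}$, which is a genuine ballot-type volume computation (in the paper, the set $Y$ and the appeal to \cite[Lemma~3.10]{kouk1}), and, more seriously, (ii) that conditionally on such a profile the $m-1$ targets can be hit \emph{exactly} and simultaneously with conditional probability $\gg_{m,c}1$. Point (ii) is again an unproven local statement: the extremal profile has only about $m-1$ cycles per $m$-adic block, and it is not clear that random lengths within blocks always permit the running corrections to close exactly at the bottom scale; ``by a direct computation in the Poisson model'' does not discharge this. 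The paper avoids both issues: exact hitting is delegated to one long free cycle per part in the lower-bound construction of Proposition~\ref{local-global}, and the profile analysis is replaced by the double H\"older/low-moment argument (Lemma~\ref{doubleholder} through Proposition~\ref{generallower}, with the forest combinatorics of Lemma~\ref{sumPsumQbound}). So the outline is the right heuristic, but the proposal has genuine gaps precisely where the sharp constant $\delta_m$ and the exponent $3/2$ must be extracted.
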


In~\cite{EFG1}, we relied on an analogy with analytic number theory wherein the problem of estimating $i(n,k)$ corresponds to the problem of estimating the proportion of integers $n\leq x$ with a divisor in a given dyadic interval $(y,2y]$: this is the so-called multiplication table problem, which was solved up to a constant factor by the second author~\cite{ford,ford-2}. Similarly, the problem of estimating $i(n,\k,\d)$ is related to higher-dimensional versions of the multiplication table problem. The connection is closest for $i(n,\k)$, which under the analogy corresponds to the proportion of $n\leq x$ that are decomposable as $n_1\cdots n_m$ with $n_i\in(y_i,2y_i]$ for each $i$. Except in some cases in which the sizes of the parameters $y_i$ are too wildly different, this proportion was computed up to a constant factor by the third author~\cite{kouk1,kouk2}. For comparison with Theorem~\ref{main}, refer in particular to~\cite[Theorem~1]{kouk1}. Thus, as in~\cite{EFG1}, the task of proving of Theorem~\ref{main} is largely one of translation.

Given the strength of the analogy with \cite[Theorem~1]{kouk1}, one might hope to be able to deduce the result directly using transference ideas. While unfortunately this does not appear to be possible, the basic outline of the proof is the same.

When the vector $\d$ is allowed to be arbitrary, however, there are some additional complications, and while there is still some connection with the generalized multiplication table problem, in fact it is somewhat fortunate that the partitions of $\nu$ constituting the main contribution to $I(n,\nu)$ correspond to $\d$ for which we know how to estimate $i(n,\k,\d)$ satisfactorily, while for the rest we can get away with a crude bound.


We have made an effort to follow the exposition and technical notation previously used in~\cite{ford,ford-2,kouk1,kouk2,EFG1}, but unfortunately many notational clashes have been unavoidable.


\medskip

{\bf Acknowledgments.} We would like to thank Ben Green for helpful conversations.


\section{Outline of the proof}\label{sec:outline}

In this section we sketch the broad idea and initial reductions involved in the proof of Theorem~\ref{imprimitive}. The proof of Theorem~\ref{primitive} relies on Theorem~\ref{imprimitive} but is otherwise unrelated, so we defer discussion to Section~\ref{sec:primitive}.

Let $\nu$ be a proper nontrivial divisor of $n$. When $\nu$ becomes large we will survive on a combination of crude arguments and previous work of Diaconis, Fulman, and Guralnick~\cite{dfg08}, so in this outline assume $\nu$ is bounded. As explained in the introduction, our starting point is the relation \eqref{Innubound}, whence we immediately infer that
\[
  I(n,\nu) \order_\nu \max_{d_i} i(n,(d_in/\nu)_i,(d_i)_i) .
\]
The estimation of $I(n,\nu)$ for $\nu$ bounded is thus immediately subsumed by the general problem of estimating $i(n,\k,\d)$.

Call a partition $(d_i)$ of $\nu$ maximizing $i(n,(d_in/\nu)_i,(d_i)_i)$ \emph{dominant}. There is a comparatively simple bound for $i(n,\k,\d)$ which already shows that, for every $\nu$, every dominant partition has the form $(d,1,\dots,1)$ for some $d\geq 1$.


\begin{lemma}\label{first-lemma}\ 
\begin{enumerate}[label={\upshape(\alph*)}]
 \item If $d\mid n$, then the proportion, $i(n,(n),(d))$ of $\pi\in\cS_n$ all of whose cycle lengths are divisible by $d$ satisfies $n^{-1+1/d} \ll i(n,(n),(d)) \le n^{-1+1/d}$.
 \item If $n=n'+n''$, $\k=(\k',\k'')$, and $\d=(\d',\d'')$, then
 \[
   i(n,\k,\d) \leq i(n',\k',\d') \, i(n'',\k'',\d'').
 \]
 Here, we assume of course that $\k'$ and $\d'$ have the same length $m'$, $\k''$ and $\d''$ have the same length $m''$, $\sum_i k'_i = n'$, and $\sum_i k''_i = n''$
 \item For every $\k$ and $\d$, we have that
 \[
   i(n,\k,\d) \leq k_1^{-1+1/d_1} \cdots k_m^{-1+1/d_m}.
 \]
 \item For every fixed $\nu\geq 1$ and sufficiently large $n$, 
 every dominant partition of $\nu$ has the form $(d,1,\dots,1)$ for some $d\geq 1$.
\end{enumerate}
\end{lemma}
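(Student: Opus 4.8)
I would prove the four parts in the order (b), (c), (a), (d): part (c) is a formal consequence of (b) and (a), and part (d) follows by pitting the upper bound from (c) against the lower bound from (a).

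\emph{Part (b)} is a union bound over the ``splitting set''. If $\pi\in\cS_n$ has disjoint fixed sets $A_1,\dots,A_m$ of sizes $k_1,\dots,k_m$ realising $\d$, put $B=A_1\cup\cdots\cup A_{m'}$. Then $B$ is $\pi$-invariant with $|B|=n'$, and since $A_{m'+1}\cup\cdots\cup A_m$ is a subset of $B^c$ of size $n''=|B^c|$ we must have $B^c=A_{m'+1}\cup\cdots\cup A_m$; hence $\pi|_B$ realises $(\k',\d')$ and $\pi|_{B^c}$ realises $(\k'',\d'')$. A permutation of $[n]$ stabilising a fixed set $B$ setwise is exactly a pair (permutation of $B$, permutation of $B^c$), so for each $B$ with $|B|=n'$ the number of $\pi$ that stabilise $B$ and restrict correctly on both halves is $\bigl(n'!\,i(n',\k',\d')\bigr)\bigl(n''!\,i(n'',\k'',\d'')\bigr)$. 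Summing over the $\binom{n}{n'}$ choices of $B$, dividing by $n!$, and using $\binom{n}{n'}n'!\,n''!=n!$ gives $i(n,\k,\d)\le i(n',\k',\d')\,i(n'',\k'',\d'')$.

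\emph{Part (c)} then follows by iterating (b) to peel off one coordinate at a time: $i(n,\k,\d)\le\prod_{i=1}^m i(k_i,(k_i),(d_i))$, and each factor is bounded by part (a) (or vanishes, which happens exactly when $d_i\nmid k_i$). For \emph{part (a)} I would condition on the cycle through the point $1$: writing $q_N$ for the proportion of $\pi\in\cS_{dN}$ all of whose cycle lengths are divisible by $d$ and $S_N=\sum_{j=0}^{N-1}q_j$ (with $q_0=1$), one gets $dN\,q_N=S_N$, hence $S_{N+1}=S_N(1+\tfrac1{dN})$, so $S_N=\prod_{j=1}^{N-1}(1+\tfrac1{dj})$ and $i(n,(n),(d))=q_{n/d}=\tfrac1n\prod_{j=1}^{n/d-1}(1+\tfrac1{dj})$. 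The lower bound is immediate from $1+\tfrac1{dj}\ge(1+\tfrac1j)^{1/d}$ (Bernoulli), which telescopes to $\prod_{j=1}^{N-1}(1+\tfrac1{dj})\ge N^{1/d}$ and hence $i(n,(n),(d))\ge d^{-1/d}n^{-1+1/d}$. For the matching upper bound one needs $\prod_{j=1}^{N-1}(1+\tfrac1{dj})\le(dN)^{1/d}$: when $d\ge3$ this follows from $(1+\tfrac1{dj})^d\le e^{1/j}$, $\sum_{j<N}\tfrac1j\le1+\log N$, and $e<d$; when $d=2$ from the identity $\prod_{j=1}^{N-1}(1+\tfrac1{2j})^2=N\prod_{j=1}^{N-1}(1+\tfrac1{4j(j+1)})$ together with $\prod_{j\ge1}(1+\tfrac1{4j(j+1)})\le e^{1/4}<2$; and when $d=1$ trivially.

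\emph{Part (d).} Let $\d=(d_1,\dots,d_m)$ be a partition of $\nu$ with $r\ge2$ parts exceeding $1$. By part (c), $i\bigl(n,(d_in/\nu)_i,(d_i)_i\bigr)\le\prod_i(d_in/\nu)^{-1+1/d_i}$; a part equal to $1$ contributes a factor $1$, and a part $\ge2$ contributes at most $(2n/\nu)^{-1/2}$, so the product is at most $(\nu/2)^{r/2}n^{-r/2}\le(\nu/2)^{\nu/2}n^{-1}$. On the other hand the one-part partition $(\nu)$ has $i(n,(n),(\nu))\ge\nu^{-1/\nu}n^{-1+1/\nu}$ by part (a), which exceeds $(\nu/2)^{\nu/2}n^{-1}$ as soon as $n$ is large in terms of $\nu$. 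Hence a partition with two parts $\ge2$ is never dominant, so every dominant partition has at most one part $\ge2$, i.e.\ is of the form $(d,1,\dots,1)$; for $\nu\le3$ there is nothing to prove. I do not expect a genuine obstacle in any of this: (b) is a bare union bound, (c) its formal iteration, and (d) a size comparison; the one real computation is the inequality $\prod_{j=1}^{N-1}(1+\tfrac1{dj})\le(dN)^{1/d}$ in part (a), where insisting on the clean constant $1$ on the right is what forces the little case split between $d=2$ and $d\ge3$.
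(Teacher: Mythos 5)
Your proposal is correct and follows essentially the same route as the paper: the same product formula $i(n,(n),(d))=\frac1n\prod_{j=1}^{n/d-1}\bigl(1+\frac1{dj}\bigr)$ in (a) (you derive it by conditioning on the full cycle through the point $1$ rather than via the paper's step-$d$ recurrence, but it is the same identity), the same union bound over the invariant set in (b), the same combination of (a) and (b) in (c), and the same comparison against the one-part partition $(\nu)$ in (d). The only genuine (and welcome) variation is your $d=2$ upper bound via the telescoping identity $\prod_{j=1}^{N-1}\bigl(1+\frac1{2j}\bigr)^2=N\prod_{j=1}^{N-1}\bigl(1+\frac1{4j(j+1)}\bigr)$ together with $e^{1/4}<2$, which cleanly avoids the paper's hand check of small $n$.
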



\begin{proof} (a) The bound is trivial when $d=1$, so we may suppose that $d\ge 2$. This is a well-known result, which can be proved as follows: let $f_d(n)$ be the number of permutations $\pi\in\cS_n$ having all cycle lengths divisible by $d$. Then certainly $f_d(0)=1$, and for $n\geq d$ we claim
\eq{rdrecurrence}{
	f_d(n) = (n-1)\cdots(n-d+2)(n-d+1)^2f_d(n-d).
}
Indeed, to choose a permutation $\pi$ all of whose cycles are $d$-divisible, first choose $d-1$ distinct cyclic elements $\pi(1),\dots,\pi^{d-1}(1)$ from $\{2,\dots,n\}$, then choose $\pi^d(1)$ from $\{1,\dots,n\}\setminus\{\pi(1),\dots,\pi^{d-1}(1)\}$, then choose a permutation $\pi'$ of the $(n-d)$-element set $\{1,\dots,n\}\setminus\{1,\pi(1),\dots,\pi^{d-1}(1)\}$ all of whose cycles are $d$-divisible. If $\pi^d(1)=1$ then we let $\pi$ coincide with $\pi'$ on $\{1,\dots,n\}\setminus\{1,\dots,\pi^{d-1}(1)\}$; if $\pi^d(1)\neq 1$ then we let $\pi(x)=\pi'(x)$ for all $x\ne 1$, and we lastly define $\pi(\pi^d(1))=\pi'(1)$.
 There are $(n-1)\cdots(n-d+1)$ choices for $\pi(1),\dots,\pi^{d-1}(1)$, $n-d+1$ choices for $\pi^d(1)$, and $f_d(n-d)$ choices for $\pi'$, so this proves~\eqref{rdrecurrence}.

Now, if $d\mid n$, then from~\eqref{rdrecurrence} we have
\als{
	f_d(n) 
	&= (n-1)!\frac{n-d+1}{n-d}\frac{n-2d+1}{n-2d}\cdots\frac{d+1}{d}\\
	&= (n-1)!\prod_{j=1}^{n/d-1} \(1 + \frac1{jd}\)\\
	&= (n-1)!\exp\(\sum_{j=1}^{n/d-1} \frac1{jd} + O\(\frac{1}{d^2} \)  \)    \\
	&= (n-1)!\exp\(\frac{\log(n/d)+\gamma}{d} + O\(\frac{1}{n}+\frac{1}{d^2}\)  \), 
}
where $\gamma$ is the Euler--Mascheroni constant. This proves the lower bound. When $d\ge 3$, we also have
\[
\prod_{j=1}^{n/d-1} \(1 + \frac1{jd}\) 
	\le \exp \(  \frac{1}{d}\sum_{j=1}^{n/d-1} \frac{1}{j}\) \\
	\le \exp \( \frac{1}{d} \(1 + \log \frac{n}{d} \) \)
	\le n^{1/d},
\]
proving the upper bound in this case.  When $d=2$, one checks by hand that the
inequality holds for $n<8$, and for $n\ge 8$ we have
\begin{align*}
\prod_{j=1}^{n/d-1} \(1 + \frac1{jd}\) 
	\le \frac32 \cdot \frac54 \exp \(\frac12 \sum_{j=3}^{n/2-1} \frac{1}{j} \)
	&\le \frac{15}{8} \exp \( \frac12 \int_2^{n/2} \frac{dt}{t} \)
		= \frac{15}{8} (n/4)^{1/2} 
		= \frac{15}{16}n^{1/2},
\end{align*}
proving the upper bound in this case as well.

\medskip

(b) We bound $n!\cdot i(n,\k,\d)$ by the sum, over all choices of a subset $A\subset\{1,\dots,n\}$ of size $n'$, of the number of ways of choosing a permutation $\pi|_A$ with disjoint fixed sets $A_1,\dots,A_{m'}$ and a permutation $\pi|_{A^c}$ with disjoint fixed sets $A_{m'+1},\dots,A_m$, both such that, for each $i$, $|A_i|=k_i$ and $\pi|_{A_i}$ has only $d_i$-divisible cycles. This proves that
\[
  n! \cdot i(n,\k,\d) \leq \binom{n}{n'} (n'! \cdot i(n',\k',\d')) ( n''! \cdot i(n'',\k'',\d'') ) ,
\]
which is equivalent to (b).

\medskip

(c) This follows immediately from parts (a) and (b). 

\medskip

(d) If $(d_i)$ is a partition of $\nu$ having at least two $d_i\geq 2$, then
\[
  \sum_{i=1}^m (1-1/d_i) \geq 1,
\]
so by part (c),
\[
  i(n,(d_in/\nu)_i,(d_i)_i) \le  (n/\nu) ^{-\sum_{i=1}^m (1-1/d_i)} \le (n/\nu)^{-1}.
\]
On the other hand, part (b) implies that
\[
  i(n,(n),(\nu)) \order n^{-1+1/\nu},
\]
so $(d_i)$ does not maximize $i(n,(d_in/\nu)_i,(d_i)_i)$ if $n$ is large enough.
\end{proof}


Though in general $i(n,\k,\d)$ is a rather subtle quantity, the case $\d=(1,1)$ for instance being the subject of the paper~\cite{EFG1}, some cases are elementary. For instance in Lemma~\ref{first-lemma}(a) we saw rather simply that $i(n,(n),(d)) \order n^{-1+1/d}$. It turns out that estimation of $i(n,(k_1,k_2),(d,1))$ is also elementary whenever $d\geq 3$.


\begin{lemma}\label{partition-d-1} Let $d\geq3$, and assume $k_1,k_2\geq 1$ and that $k_1$ is divisible by $d$. Then
\[
  i(n,(k_1,k_2),(d,1)) \order k_1^{-1+1/d}.
\]
\end{lemma}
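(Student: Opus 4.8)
The \textbf{upper bound} is immediate from Lemma~\ref{first-lemma}(c): with $m=2$, $\k=(k_1,k_2)$ and $\d=(d,1)$ it yields $i(n,(k_1,k_2),(d,1))\le k_1^{-1+1/d}\,k_2^{\,-1+1/1}=k_1^{-1+1/d}$. Thus the content of the lemma is the matching \emph{lower bound} $i(n,(k_1,k_2),(d,1))\gg k_1^{-1+1/d}$, and this is exactly the point at which the hypothesis $d\ge3$ is essential. The plan is to prove it by a second-moment (Paley--Zygmund) argument.

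Fix $k_1,k_2$, let $N$ be the number of $\pi\in\cS_n$ counted by $i(n,(k_1,k_2),(d,1))$ — that is, those having a fixed set $A$ of size $k_1$ all of whose cycles in $\pi|_A$ are $d$-divisible (the complement $A^c$ of size $k_2$ carries no condition) — and for such a $\pi$ let $R(\pi)$ denote the number of \emph{valid} choices of $A$, so that $N=\#\{\pi:R(\pi)\ge1\}$. Write $f_d(m)$ as in the proof of Lemma~\ref{first-lemma}(a) and $h(m):=f_d(m)/m!=i(m,(m),(d))$; then $h(0)=1$, and $h(m)\order m^{-1+1/d}$ for every positive multiple $m$ of $d$ by that lemma. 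Summing over the $\binom{n}{k_1}$ candidate sets $A$ gives the first moment
\[
  \sum_{\pi\in\cS_n}R(\pi)=\binom{n}{k_1}f_d(k_1)\,k_2!=n!\,h(k_1)\order n!\,k_1^{-1+1/d}.
\]

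For the second moment I would use that if $A,B$ are both valid for $\pi$ then $\pi$ preserves each of the four sets $A\cap B,\ A\setminus B,\ B\setminus A,\ \overline{A\cup B}$, of sizes $j,\ k_1-j,\ k_1-j,\ n-2k_1+j$ where $j=|A\cap B|$, and $\pi$ restricted to each of the first three has only $d$-divisible cycles, forcing $d\mid j$; conversely any such four-block configuration yields a valid triple $(\pi,A,B)$, and this correspondence is a bijection. Counting by $j$ and cancelling the free factor $(n-2k_1+j)!$ from the last block,
\[
  \sum_{\pi\in\cS_n}R(\pi)^2=\sum_{\substack{d\mid j\\ \max(0,\,2k_1-n)\le j\le k_1}}\frac{n!\,f_d(j)\,f_d(k_1-j)^2}{j!\,(k_1-j)!^{2}}\ \le\ n!\sum_{\substack{d\mid j\\ 0\le j\le k_1}}h(j)\,h(k_1-j)^2.
\]
Now I would estimate $\sum_{d\mid j,\ 0\le j\le k_1}h(j)h(k_1-j)^2$: the endpoint $j=k_1$ (which includes the diagonal $A=B$) contributes $\order k_1^{-1+1/d}$; the endpoint $j=0$ contributes $\ll k_1^{-2+2/d}$; and for the interior terms, writing $\ell=k_1-j$ and splitting at $\ell=k_1/2$, the range $\ell\le k_1/2$ contributes $\ll k_1^{-1+1/d}\sum_{d\mid\ell,\ \ell\ge d}\ell^{-2+2/d}$ while the range $\ell>k_1/2$ contributes $\ll k_1^{-2+2/d}\sum_{d\mid i,\ 0<i<k_1/2}i^{-1+1/d}\ll k_1^{-2+3/d}$. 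Since $d\ge3$ gives $2-2/d>1$, the first sum converges, and $-2+3/d\le-1$, so all three pieces are $\ll k_1^{-1+1/d}$; hence $\sum_\pi R(\pi)^2\ll n!\,k_1^{-1+1/d}$. By Cauchy--Schwarz,
\[
  \frac{N}{n!}\ \ge\ \frac{\bigl(\sum_\pi R(\pi)\bigr)^2}{n!\,\sum_\pi R(\pi)^2}\ \gg\ \frac{(k_1^{-1+1/d})^2}{k_1^{-1+1/d}}\ =\ k_1^{-1+1/d},
\]
which is the required lower bound.

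The main obstacle is the second-moment estimate, and within it the convergence of $\sum_{d\mid\ell}\ell^{-2+2/d}$: this fails for $d=2$, where the sum is of order $\log k_1$, reflecting the fact that $i(n,(k_1,k_2),(2,1))$ is not merely $\order k_1^{-1/2}$ but the genuinely delicate quantity analysed in~\cite{EFG1}. One should also verify that all implied constants remain controlled; they depend at most on $d$ and, as the estimates above make clear, stay bounded as $d\to\infty$.
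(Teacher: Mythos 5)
Your proposal is correct and follows essentially the same route as the paper: upper bound from Lemma~\ref{first-lemma}(c), and a matching lower bound via the second moment of the number of valid $k_1$-sets, decomposing pairs $(A,B)$ into $A\cap B$, $A\setminus B$, $B\setminus A$ and using $d\ge 3$ to make the resulting sum converge, then concluding by Cauchy--Schwarz.
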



\begin{proof}
The upper bound is contained in Lemma~\ref{first-lemma}(c). Recall the proof, which follows from parts (a) and (b) of that lemma: The number of ways of choosing a set $A_1$ of size $k_1$ is $\binom{n}{k_1}$, and the number of $\pi\in \cS_k$ having all cycles divisible by $d$ is $\order k!/k^{1-1/d}$, so
\[
  i(n,(k_1,k_2),(d,1)) \ll \frac1{n!} \binom{n}{k_1} \frac{k_1!}{k_1^{1-1/d}} k_2! = k_1^{-1+1/d}.
\]

Given $\pi\in\CS_n$, let $X=X(\pi)$ denote the number of acceptable choices for sets $A_1$ of size $k_1$ that are fixed by $\pi$ and such that $\pi|_{A_1}$ consists of $d$-divisible cycles. Then the argument in the above paragraph uses the simple relations
\[
  i(n,(k_1,k_2),(d,1)) = \P(X>0)\le \E X,
\]
where the underlying probability measure is the uniform measure on $\CS_n$, and then proceeds by showing that $\E X\asymp k_1^{-1+1/d}$. To find a matching lower bound, we will compute the second moment $\E X^2$, or in other words the number of pairs of $k_1$-sets $A_1,A_1'$ such that $\pi$ fixes both $A_1$ and $A_1'$ and such that $\pi|_{A_1}$ and $\pi|_{A_1'}$ are both wholly composed of $d$-divisible cycles. Note then that $\pi$ must fix each of the sets $A_1\cap A_1'$, $A_1\setminus A_1'$, $A_1'\setminus A_1$, and the restriction of $\pi$ to each of these sets must be wholly composed of $d$-divisible cycles. The number of ways of choosing two sets of size $k_1$ which overlap in a set of size $k_{11}$ is
\[
  \binom{n}{k_{11},k_1-k_{11},k_1-k_{11},k_2 - k_1 + k_{11}},
\]
so we deduce that
\als{
  \E X^2
  &\ll \ssum{0\le k_{11}\le k_1 \\ d|k_{11}} \frac1{n!} \binom{n}{k_{11},k_1-k_{11},k_1-k_{11},k_2 - k_1 + k_{11}} \frac{k_{11}!(k_1-k_{11})!^2(k_2 - k_1 + k_{11})!}{(k_{11}+1)^{1-1/d}(k_1-k_{11}+1)^{2-2/d}}\\
  &= \ssum{0\le k_{11}\le k_1 \\ d|k_{11}} \frac{1}{(k_{11}+1)^{1-1/d}(k_1-k_{11}+1)^{2-2/d}}   \\
  &\ll \ssum{0\le k_{11}\le k_1/2 \\ d|k_{11} } \frac1{(k_{11}+1)^{1-1/d}k_1^{2-2/d}}
  	+ \ssum{k_1/2<k_{11}\le k_1 \\ d|k_{11}} \frac{1}{k_1^{1-1/d}(1+k_1-k_{11})^{2-2/d}} \\
  &\order k_1^{-1+1/d}.
}
Hence by Cauchy--Schwarz we have
\[
  \P(X>0) \geq \frac{(\E X)^2}{\E X^2} \order k_1^{-1+1/d}.
\]
This proves the lemma.
\end{proof}


On the other hand, estimation of $i(n,\k)$ (that is, $i(n,\k,\d)$ in the case in which $d_i=1$ for each $i$) is not nearly so straightforward, and most of the paper will be devoted to establishing an estimate in this case, namely Theorem~\ref{main}. The proof of this theorem is divided over the next three sections. Specifically we prove a useful local-global principle in Section~\ref{sec:local-global}, we then prove the upper bound in Section~\ref{sec:upper}, and finally we prove the lower bound in Section~\ref{sec:lower}.

Assuming that we have proved Theorem~\ref{main}, we can then combine our various bounds for $i(n,\k,\d)$ to determine the dominant partition of $\nu$ for each bounded $\nu$. Moreover, since we have a sharp estimate for $i(n,(d_in/\nu)_i,(d_i)_i)$ for each such dominant partition, we are able to deduce a sharp estimate for $I(n,\nu)$.


\begin{proposition}\label{dominantpart}
Assume $\nu$ is bounded and $n$ is large. If $\nu\leq 4$, then the unique dominant partition of $\nu$ is $(1,\dots,1)$, while if $\nu\geq 5$, then the unique dominant partition of $\nu$ is $(\nu-1,1)$.
\end{proposition}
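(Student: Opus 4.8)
The plan is to compare the quantities $i(n,(d_in/\nu)_i,(d_i)_i)$ over all partitions $(d_i)$ of $\nu$ and show that, for large $n$, the maximum is attained uniquely at the partition identified in the statement. By Lemma~\ref{first-lemma}(d) we already know that every dominant partition has the shape $(d,1,\dots,1)$ with $d\mid \nu$ actually equal to $\nu$ is not forced yet; rather we only know the shape is $(d,1,\ldots,1)$ for some $d$ dividing... no: part (d) only says the shape is $(d,1,\dots,1)$ for some $d\ge 1$, not that $d=\nu$. So the first step is to reduce to comparing, for each $d$ with $1\le d\le\nu$ (and $d$ a part that can appear, i.e. $d\le\nu$ with the remaining $\nu-d$ parts all equal to $1$, which forces $m=\nu-d+1$), the value of $i(n,(dn/\nu,\,n/\nu,\dots,n/\nu),(d,1,\dots,1))$. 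The two extreme cases are $d=1$, giving $i(n,(n/\nu)_{i=1}^{\nu})=i(n,\k)$ with all parts equal, and $d=\nu$, giving $i(n,(n),(\nu))\order n^{-1+1/\nu}$ by Lemma~\ref{first-lemma}(a). For intermediate $2\le d\le\nu-1$, I would use Lemma~\ref{first-lemma}(b) to split off the block of size $dn/\nu$ and the $\nu-d$ singleton blocks, then apply Lemma~\ref{partition-d-1} (valid once $d\ge 3$) or the $d=2$ case handled separately, together with Theorem~\ref{main} for the all-ones remainder, to get an upper bound that is a negative power of $n$ strictly smaller (more negative exponent) than both $-\delta_\nu$ (the exponent for $d=1$, coming from Theorem~\ref{main} with all $k_i\order n$) and $-1+1/\nu$ (the exponent for $d=\nu$).

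Concretely, for $2\le d\le\nu-1$: by part (b), $i(n,(dn/\nu,(n/\nu)^{\nu-d}),(d,1^{\nu-d}))\le i(dn/\nu,(dn/\nu),(d))\cdot i((\nu-d)n/\nu,((n/\nu)^{\nu-d}),(1^{\nu-d}))$. The first factor is $\order (dn/\nu)^{-1+1/d}\ll n^{-1+1/d}$ by Lemma~\ref{first-lemma}(a); the second factor is $\order n^{-\delta_{\nu-d}}(\log n)^{-3/2}$ by Theorem~\ref{main} (all parts of size $n/\nu\order n$), when $\nu-d\ge 2$, and is $\le 1$ trivially when $\nu-d=1$ (i.e. $d=\nu-1$), where the first factor alone already gives $\ll n^{-1+1/(\nu-1)}$. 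So the $d=\nu-1$ partition gives exponent exactly $-1+1/(\nu-1)$, which for $\nu\ge 5$ is larger than $-\delta_\nu$ would be if $\delta_\nu>1-1/(\nu-1)$; checking the tabulated/closed-form values of $\delta_m$ one sees $\delta_\nu>1-1/(\nu-1)$ precisely when $\nu\ge 5$ (and $\delta_\nu<1-1/(\nu-1)$, indeed $\delta_\nu$ small, for $\nu=2,3,4$), so the comparison flips exactly at $\nu=5$ as claimed. For $2\le d\le\nu-2$ the exponent is $\le -(1-1/d)-\delta_{\nu-d}$, and one checks this is strictly more negative than $\max(-\delta_\nu,\,-1+1/(\nu-1))$ for all such $d$; since the involved quantities are all absolute constants depending only on $\nu$ (which is bounded), the word ``sufficiently large $n$'' absorbs the $\log$ factors and the implied constants, giving strict domination by the winning partition for $n$ large.

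The main obstacle is the bookkeeping in the last comparison: one must verify, uniformly over $2\le d\le\nu-1$ and over all bounded $\nu$, the two numerical inequalities $1-1/d+\delta_{\nu-d}>\delta_\nu$ (for $\nu\le 4$, to kill every $d\ge 2$) and $1-1/d+\delta_{\nu-d}>1-1/(\nu-1)$ together with $1-1/(\nu-1)>\delta_\nu$ (for $\nu\ge 5$), where $\delta_0$ and $\delta_1$ should be read as $0$ so that the $d=\nu$ and $d=\nu-1$ cases fit the same scheme. This is a finite computation in the sense that $\delta_m$ is explicit and increasing, and $1-1/d$ is increasing in $d$, so it suffices to check a few boundary cases ($d=2$, $d=\nu-1$, and the threshold $\nu=4$ versus $\nu=5$); I would present it as a short lemma-free calculation using the integral formula $\delta_m=\int_1^{(m-1)/\log m}(\log t)\,dt$, noting that $(m-1)/\log m$ grows, hence $\delta_m\to\infty$, which makes the $d\le\nu-2$ cases lose by a wide margin and leaves only the delicate $d=\nu-1$ versus $d=1$ comparison, resolved by the inequality $\delta_\nu\gtrless 1-1/(\nu-1)$ with equality-crossing between $\nu=4$ and $\nu=5$. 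Finally, uniqueness of the dominant partition follows because all these inequalities are strict and $\nu$ is bounded, so no two partitions tie for large $n$.
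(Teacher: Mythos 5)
Your proposal is essentially the paper's own proof: reduce to shapes $(d,1,\dots,1)$ via Lemma~\ref{first-lemma}(d), estimate each candidate using Lemma~\ref{first-lemma}(a),(b), Lemma~\ref{partition-d-1} (which indeed supplies the matching lower bound needed to crown $(\nu-1,1)$ when $\nu\ge5$, since then $\nu-1\ge3$) and Theorem~\ref{main}, and then compare the exponents $\delta_\nu$, $1-1/d+\delta_{\nu-d}$ and $1-1/(\nu-1)$, with the crossover at $\nu=5$. Two small corrections: in your final paragraph the inequality for $\nu\ge5$ should read $\delta_\nu>1-1/(\nu-1)$ (as you state correctly earlier), and the intermediate cases $2\le d\le\nu-2$ do not all lose ``by a wide margin'' --- at $\nu=5$ the cases $d=3$ and $d=2$ reduce to the razor-thin checks $\delta_2>1/12$ and $\delta_3>1/4$ (equivalently $\delta_j>\tfrac{j-1}{4(5-j)}$ for $j\in\{2,3\}$), which is exactly the direct numerical verification the paper performs, so these must genuinely be checked rather than dismissed.
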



\begin{proof}
For $\nu=2$, it suffices to observe from Lemma~\ref{first-lemma}(a) that $i(n,(n),(2))\order n^{-1/2}$, while by Theorem~\ref{main} (alternatively, the main result of~\cite{EFG1}) we have $i(n,(n/2,n/2)) = n^{-\delta_2+o(1)}$. Since $\delta_2 = 0.08\ldots < 1/2$, the dominant partition of $2$ is $(1,1)$.

\medskip

Similarly, for $\nu=3$, just observe that $i(n,(n),(3))\order n^{-2/3}$ by Lemma~\ref{first-lemma}(a),
\[
  i(n,(2n/3,n/3),(2,1)) \ll n^{-1/2}
\]
by Lemma~\ref{first-lemma}(c), and
\[
  i(n,(n/3,n/3,n/3)) \order n^{-\delta_3 + o(1)}
\]
by Theorem~\ref{main}. Since $\delta_3 = 0.27\ldots < 1/2$, the dominant partition of $3$ is $(1,1,1)$.

\medskip

For $\nu=4$, again, observe that $i(n,(n),(4)) \order n^{-3/4}$, that
\[
  i(n,(3n/4,n/4),(3,1)) \ll n^{-2/3},
\]
and that
\[
  i(n,(n/4,n/4,n/4,n/4)) =    n^{-\delta_4+o(1)}.
\]
By Lemma~\ref{first-lemma}(d), the only other partition we need to consider is $(2,1,1)$, and for this partition we have from Lemma~\ref{first-lemma}(b) and Theorem~\ref{main} that
\begin{align*}
  i(n,(n/2,n/4,n/4),(2,1,1))
  &\le i(n/2,(n/2),(2)) \, i(n/2,(n/4,n/4))\\
  &= n^{-1/2-\delta_2 + o(1)}.
\end{align*}
Since $\delta_4 = 0.506\ldots$, while $1/2 + \delta_2 = 0.508\ldots$, the dominant partition of $4$ is $(1,1,1,1)$.

\medskip

Now, assume that $\nu\geq 5$. By Lemma~\ref{first-lemma}(d) we need only consider partitions of the form $(d,1,\dots,1)$. By parts (a) and (b) of Lemma~\ref{first-lemma}, combined with Theorem~\ref{main}, we have
\begin{align*}
  i(n,(dn/\nu,n/\nu,\dots,n/\nu),(d,1,\dots,1))
  &\leq i(dn/\nu, (dn/\nu), (d)) \, i((\nu-d)n/\nu,(n/\nu,\dots,n/\nu))\\
  &= n^{-1+1/d-\delta_{\nu-d} + o_\nu(1)}
\end{align*}
whenever $d\leq \nu - 2$. We use this bound only when $d\geq 2$, since when $d=1$ by Theorem~\ref{main} we have the slightly stronger bound
\[
  i(n,(n/\nu,\dots,n/\nu)) = n^{-\delta_\nu + o(1)}.
\]
Meanwhile, by Lemma~\ref{first-lemma}(a) we have
\[
  i(n,(n),(\nu)) \order n^{-1+1/\nu},
\]
which is always negligible since by Lemma~\ref{partition-d-1} we have
\[
  i(n,((\nu-1)n/\nu,n/\nu),(\nu-1,1)) \order n^{-1+1/(\nu-1)}.
\]
Thus the exponents we are comparing are
\[
\delta_\nu,\quad 1-\frac{1}{d} + \delta_{\nu - d} \quad (2\leq d\leq \nu-2),\quad 1-\frac{1}{\nu-1},
\]
and we claim that the last of these is the smallest whenever $\nu\geq 5$. 

Since $\delta_m=\int_1^{(m-1)/\log m}(\log t)dt$, the sequence $(\delta_m)_{m\ge2}$ is increasing. In particular, $\delta_\nu\ge\delta_6>1$ for $\nu\ge6$, and one checks by direct computation that $\delta_5=0.77\ldots>1-1/4$ too.

Next, if $2\le d\le \nu-4$, then
\[
1-\frac{1}{d} + \delta_{\nu - d} \ge \frac{1}{2}+\delta_4>1 .
\]
So, it remains to show that $1-1/d+\delta_{\nu-d}>1-1/(\nu-1)$ when $d\in\{\nu-3,\nu-2\}$. Writing $d=\nu-j$, this amounts to proving that
\[
\delta_j > \frac{j-1}{(\nu-1)(\nu-j)} \quad(j\in\{2,3\},\ \nu\ge5) 
\quad\Leftrightarrow\quad
\delta_j > \frac{j-1}{4(5-j)} \quad(j\in\{2,3\}) ,
\]
which one checks by direct computation.
\end{proof}


This completes the sketch of the proof of Theorem~\ref{imprimitive} when $\nu$ is bounded. As $\nu$ begins to grow with $n$, we must be more careful about some of our bounds, but we can afford to be more relaxed about others, and, by and large, the proof becomes simpler, using as key input Lemma \ref{first-lemma} and the case $m=2$ of Theorem~\ref{main}. As $\nu$ becomes very large, say of size $n^{1-o(1)}$, then our method begins to falter, and we outsource most of the work to~\cite{dfg08}. For all this, see Section~\ref{sec:imprimitive}.


\section{A local-to-global principle}\label{sec:local-global}

Given a $k$-tuple $\c=(c_1,\dots,c_k)$ of nonnegative integers, let $\sL_m(\c)$ be the set of all $m$-tuples
\[
  \(\sum_{j=1}^k jx_{1j}, \dots, \sum_{j=1}^k jx_{mj}\),
\]
where $(x_{ij})$ is an $m\times k$ matrix whose entries are nonnegative integers such that $\sum_{i=1}^m x_{ij} = c_j$ for each $j$. Note then that $i(n,\k)$ is precisely the probability of the event $\k\in\sL_m(\c)$, where $\c$ is the cycle type of a random permutation: here we say that $\pi\in\cS_n$ has \emph{cycle type} $\c$ if $\pi$ has exactly $c_j$ $j$-cycles for each $j\leq n$.
Instead of measuring this probability directly, however, we will use a convenient local-to-global principle which relates $i(n,\k)$ to the average size of $\sL_m(\c)$, given in Proposition \ref{local-global} below. The terminology `local-to-global' means that we turn a question about the local distribution of the set $\sL_m(\c)$ (whether it contains the point $\k$) to a question about its global distribution. Notice that if $k_{m-1}\ll k_1=k$, then a naive heuristic implies that the event $\k\in\sL_m(\c)$ occurs with probability $\approx |\sL_m(\c)|/k^{m-1}$. Our local-to-global estimate proves that this naive heuristic is true on average: 


\begin{proposition}\label{local-global}
Let $k = k_1$, and let $\X_k = (X_1,\dots,X_k)$, where $X_1,\dots,X_k$ are independent Poisson random variables with $\E X_j = 1/j$. Then
\[
  i(n,\k) \ll_m \(\frac{k_{m-1}}{k}\)^{m} \frac{\E|\sL_m(\X_k)|}{k^{m-1}}.
\]
Moreover if $k_{m-1} \leq c k_1$ then
\[
  i(n,\k) \order_{m,c} \frac{\E|\sL_m(\X_k)|}{k^{m-1}}.
\]
\end{proposition}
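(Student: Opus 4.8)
The plan is to pass through the Poisson model for cycle counts and then exploit near-independence across cycle lengths. Recall (Cauchy's formula, or the Shepp--Lloyd philosophy) that if $\c=(c_1,\dots,c_k)$ is the restriction to lengths $\leq k$ of the cycle type of a uniform $\pi\in\cS_n$, then for $n$ much larger than $k$ the distribution of $\c$ is comparable, up to constants depending only on the way $k$ relates to $n$, to that of $\X_k=(X_1,\dots,X_k)$ with the $X_j$ independent Poisson of mean $1/j$; more precisely, for any event $E$ depending only on $(c_1,\dots,c_k)$ one has $\P(\c\in E)\asymp \P(\X_k\in E)$ when, say, $k\leq n/2$, and in general one controls the comparison by a factor involving $n$ and $\sum_{j\le k}1/j$. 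Since the event $\k\in\sL_m(\c)$ depends only on $(c_1,\dots,c_k)$ — indeed only on $(c_1,\dots,c_{k_m})$, but cycles of length $>k=k_1$ can only be placed in blocks of size $\ge k_2,\dots,k_m$ and this is handled by the first-moment bound in Lemma~\ref{first-lemma} — the first reduction is to replace $i(n,\k)$ by $\P(\k\in\sL_m(\X_k))$ up to the relevant constants. I would be careful here to separate the contribution of long cycles: write $\pi$'s cycle type as $(\c,\c')$ with $\c$ supported on $[1,k]$ and $\c'$ on $(k,n]$, observe that $\k\in\sL_m(\c,\c')$ forces a sub-sum of $\c'$ to be absorbed into $(k_2,\dots,k_m)$, and bound that event crudely by $(k_{m-1}/k)^{O(m)}$ times the probability that $\c$ alone already admits the decomposition into $m$ pieces of the reduced sizes.

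Next, the core estimate is the ``naive heuristic'' made rigorous: for the Poisson vector $\X_k$,
\[
  \P\bigl(\k\in\sL_m(\X_k)\bigr)\asymp_m \frac{\E|\sL_m(\X_k)|}{k^{m-1}}
\]
when $k_{m-1}\asymp k$, and $\ll_m (k_{m-1}/k)^m k^{-(m-1)}\,\E|\sL_m(\X_k)|$ in general. The upper bound is the easy direction: condition on $\X_k$, and bound the indicator $\one[\k\in\sL_m(\X_k)]$ using the fact that $\sL_m(\X_k)$ is an ``interval-like'' set — it is the sumset of the segments $\{0,\dots,jX_j\}$ along the last coordinate freedom — so that if $\k$ lies in it, then so do $\asymp k^{m-1}/(k_{m-1}\cdots)$-many lattice points in a box around $\k$; summing over translates and taking expectations gives $\P(\k\in\sL_m(\X_k))\ll (\text{shape factor})\cdot \E|\sL_m(\X_k)|/k^{m-1}$. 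Concretely one shows that $\sL_m(\X_k)$ contains, together with $\k$, a full translate of $\sL_{m}$ evaluated on a thinned-out version of $\X_k$ (split each $c_j$ into its contributions and move a few $1$-cycles around), which furnishes $\gtrsim k^{m-1}$ points all of which lie in $\sL_m$ of a slightly perturbed configuration — this is the standard ``a point in the set spawns many points in the set'' trick used for the multiplication-table problem.

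For the matching lower bound when $k_{m-1}\le ck_1$, I would run a second-moment / averaging argument. The quantity $\E|\sL_m(\X_k)|=\sum_{\k'}\P(\k'\in\sL_m(\X_k))$ is concentrated, by the upper bound just proved together with the geometry of $\sL_m$, on $\k'$ in a box of side $\asymp_c k$ in each coordinate; within that box translation-invariance-up-to-constants of the Poisson model (shifting a bounded number of cycles changes the measure by a bounded factor, since $\P(X_j=a)\asymp\P(X_j=a+O(1))$ uniformly for $j\le k$) shows $\P(\k'\in\sL_m(\X_k))\asymp_{m,c}\P(\k\in\sL_m(\X_k))$ for all such $\k'$. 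Hence $\E|\sL_m(\X_k)|\asymp_{m,c} k^{m-1}\P(\k\in\sL_m(\X_k))$, which is the desired lower bound. The main obstacle, and where the real work lies, is making precise the claim that $\sL_m(\X_k)$ is ``fat'' around any point it contains — i.e. producing $\gg k^{m-1}$ nearby points of $\sL_m$ from a single one, uniformly, while keeping the associated Poisson probabilities comparable; once that structural/combinatorial lemma is in hand, both directions follow from routine summation, and the passage between $\cS_n$ and the Poisson model is standard and accounts for the hypothesis that the $k_i$ are not allowed to be too wildly different only through the explicit $(k_{m-1}/k)^m$ loss.
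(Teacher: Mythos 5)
There is a genuine gap, and it occurs at the very first step, before the part you yourself flag as ``where the real work lies.'' Your plan identifies $i(n,\k)$ with $\P(\k\in\sL_m(\X_k))$ for the truncated Poisson vector $\X_k$, $k=k_1$. But membership $\k\in\sL_m(\c)$ forces $S(\c)=k_1+\cdots+k_m=n$, while $S(\X_k)=\sum_{j\le k}jX_j$ is typically of size about $k$; so $\P(\k\in\sL_m(\X_k))$ is astronomically small and is not the local quantity the proposition is about. The event counted by $i(n,\k)$ genuinely involves cycles of length up to $k_m\asymp n$, and it cannot be transferred to the model $\X_k$ by the blanket claim that $\P(\c\in E)\asymp\P(\X_k\in E)$ for every event $E$ depending on $(c_1,\dots,c_k)$: a two-sided comparison with absolute constants fails for rare events (e.g.\ events forcing $S(\c)>n$ have probability $0$ for a permutation and positive Poisson probability), and the events here have only polynomially small probability, exactly the regime where such transfer needs proof. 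Your parenthetical remark that the long cycles are ``handled by the first-moment bound in Lemma~\ref{first-lemma}'' does not supply the missing reduction, and the subsequent steps (the ``fatness''/spawning claim for $\sL_m$, and the ``translation invariance up to bounded factors'' giving $\P(\k'\in\sL_m(\X_k))\asymp\P(\k\in\sL_m(\X_k))$ over a box of side $\asymp k$) are heuristics with no argument behind them; $\sL_m(\c)$ is not interval-like, and moving $\k$ to $\k'$ requires rearranging cycles of unbounded length, not a bounded number of them.

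For contrast, the paper never estimates a local Poisson probability at all: the factor $k^{-(m-1)}$ comes from the anatomy of the permutation itself. For the upper bound one extracts the longest cycle $\sigma_i$ from each block, with lengths $\ell_i\ge Q(\c)=\max\{C^+(\c),\,k-S(\c)\}$, notes that the admissible tuples $(\ell_i)$ satisfy $(k_{\tau(i)}-\ell_i)_i\in P_{m-1}\sL_m(\c+\c')$, and sums the weights $1/(\ell_1\cdots\ell_{m-2}\ell_{m-1}^2)$ coming from Proposition~\ref{sieve}; this yields $|\sL_m(\c+\c')|/Q(\c)^m$ and is finished by the moment bound of Lemma~\ref{lem-sum-gen-higherdim}(c), which is where the loss $(k_{m-1}/k)^m$ appears. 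For the lower bound one constructs permutations $\alpha\prod_{i,j}\sigma_{ij}\beta$ with small, medium and long cycles, counts them by Proposition~\ref{sieve}, and uses the freedom in the medium cycle lengths (the last one in each block pinned by membership in $P_{m-1}\sL_m(\c)$) to produce the $h^{m-1}$ normalization, then rescales via Lemma~\ref{lem555-higherdim}. None of these mechanisms is present in your sketch, so as it stands the proposal does not prove the proposition.
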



We start with a few basic upper bounds for $\sL_m(\c)$. Throughout this section we will denote by $P_{m-1}$ the projection onto the first $m-1$ coordinates, and we will often use the observation that $|\sL_m(\c)| = |P_{m-1}\sL_m(\c)|$: this holds simply because $\sL_m(\c)$ is contained in the hyperplane of $\R^m$ defined by $x_1+\cdots+x_m = \sum_{j=1}^k j c_j$.


\begin{lemma}\label{Lclem-higherdim} Let $\c=(c_1,\dots,c_k)$ and $\c'=(c'_1,\dots,c'_k)$.
\begin{enumerate}[label={\upshape(\alph*)}]
\item $|\sL_m(\c+\c')|\leq |\sL_m(\c)|\cdot|\sL_m(\c')|.$
\item $|\sL_m(\c)| \leq m^{c_1+\cdots+c_k}.$
\item If $c'_{j_1}=\cdots=c'_{j_h}=0$ and $c'_j=c_j$ for all other $j$, then ${|\sL_m(\c)| \leq  |\sL_m(\c')|m^{c_{j_1}+\cdots+c_{j_h}}.}$
\end{enumerate}
\end{lemma}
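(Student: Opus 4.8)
The plan is to prove part (a) first --- it is the only part that requires an idea --- and then deduce (b) and (c) from it by purely formal manipulations.

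For (a), I would establish the Minkowski-sum inclusion $\sL_m(\c+\c')\subseteq\sL_m(\c)+\sL_m(\c')$, which immediately yields $|\sL_m(\c+\c')|\le|\sL_m(\c)|\cdot|\sL_m(\c')|$ since $|A+B|\le|A|\cdot|B|$ for finite sets $A,B$. So take a point of $\sL_m(\c+\c')$ witnessed by a nonnegative integer $m\times k$ matrix $(y_{ij})$ with column sums $\sum_{i=1}^m y_{ij}=c_j+c'_j$. I would split this matrix column by column: in column $j$ we have $y_{1j}+\cdots+y_{mj}=c_j+c'_j\ge c_j$, so we may choose integers $0\le x_{ij}\le y_{ij}$ with $\sum_{i=1}^m x_{ij}=c_j$ (for instance, process the rows in order, setting $x_{ij}$ equal to the minimum of $y_{ij}$ and the remaining budget, where the budget starts at $c_j$), and then set $x'_{ij}=y_{ij}-x_{ij}$, so that $x'_{ij}\ge0$ and $\sum_{i=1}^m x'_{ij}=c'_j$. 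The matrices $(x_{ij})$ and $(x'_{ij})$ witness membership in $\sL_m(\c)$ and $\sL_m(\c')$ respectively, and since $\sum_{j=1}^k jy_{ij}=\sum_{j=1}^k jx_{ij}+\sum_{j=1}^k jx'_{ij}$ for every $i$, the original point is the sum of the two corresponding points.

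For (b), write $\c=\sum_{j=1}^k c_je_j$ with $e_j$ the $j$-th standard basis vector of $\Z^k$, and observe that $|\sL_m(e_j)|=m$: the only admissible matrices have a single $1$ in column $j$, producing the $m$ distinct points of $\R^m$ having one coordinate equal to $j$ and the rest $0$. Iterating part (a) then gives $|\sL_m(\c)|\le\prod_{j=1}^k|\sL_m(e_j)|^{c_j}=m^{c_1+\cdots+c_k}$. (Equivalently, one can bound directly the number of matrices with the prescribed column sums by $m^{c_1+\cdots+c_k}$, since each of the $c_j$ units in column $j$ can be assigned to any of the $m$ rows.) For (c), decompose $\c=\c'+\c''$, where $\c''$ has the entries $c_{j_1},\dots,c_{j_h}$ in positions $j_1,\dots,j_h$ and is zero elsewhere; then part (a) gives $|\sL_m(\c)|\le|\sL_m(\c')|\cdot|\sL_m(\c'')|$ and part (b) gives $|\sL_m(\c'')|\le m^{c_{j_1}+\cdots+c_{j_h}}$. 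The only step with any real content is the column-wise splitting in (a), and since such a splitting always exists, I anticipate no genuine obstacle.
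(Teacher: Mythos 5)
Your proposal is correct and follows essentially the same route as the paper: part (a) via the Minkowski-sum inclusion $\sL_m(\c+\c')\subseteq\sL_m(\c)+\sL_m(\c')$ obtained by splitting the witnessing matrix column by column (the paper simply asserts such a splitting exists, you make it explicit), part (b) by bounding the possible columns (your direct count matches the paper's $\prod_j\binom{m+c_j-1}{c_j}\le m^{\sum_j c_j}$ bound), and part (c) as an immediate combination of (a) and (b). No gaps.
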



\begin{proof}
(a) Suppose $(x_{ij})$ is such that $\sum_i x_{ij} = c_j+c'_j$ for each $j$. We can find $(y_{ij})$ and $(z_{ij})$ such that $x_{ij} = y_{ij} + z_{ij}$ for all $i,j$, and such that $\sum_i y_{ij} = c_j$ and $\sum_i z_{ij} = c'_j$ for each $j$. Thus $\sL_m(\c+\c')\subset \sL_m(\c)+\sL_m(\c'),$ so (a) holds. 

\medskip

(b) We have that
\[
|\sL_m(\c)|\le \prod_{j=1}^k  |\{(x_{1j},\dots,x_{mj}): x_{1j}+\cdots +x_{mj}=c_j\}|
	= \prod_{j=1}^k  \binom{m+c_j-1}{c_j} \le m^{c_1+\cdots+c_k},
\] 
as claimed.

\medskip

(c) The claimed inequality follows immediately from parts (a) and (b).
\end{proof}


\begin{lemma}\label{lem555-higherdim}
Suppose that $k \leq k'$. Then $\E |\sL_m(\X_{k'})| \le (k'/k)^{m-1} \E |\sL_m(\X_k)|.$
\end{lemma}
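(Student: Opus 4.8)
The plan is to reduce from $\X_{k'}$ to $\X_k$ by peeling off the extra coordinates $X_{k+1},\dots,X_{k'}$ and controlling their contribution crudely via Lemma~\ref{Lclem-higherdim}. Fix a realization $\c=(c_1,\dots,c_{k'})$ and write $\c=\a+\b$, where $\a$ is supported on the first $k$ coordinates (agreeing there with $\c$, zero afterwards) and $\b$ is supported on coordinates $k+1,\dots,k'$. By Lemma~\ref{Lclem-higherdim}(a) this gives $|\sL_m(\c)|\le|\sL_m(\a)|\cdot|\sL_m(\b)|$; padding with zeros does not change $\sL_m$, so $|\sL_m(\a)|=|\sL_m(c_1,\dots,c_k)|$, and Lemma~\ref{Lclem-higherdim}(b) gives $|\sL_m(\b)|\le m^{c_{k+1}+\cdots+c_{k'}}$. (Equivalently, one can just quote Lemma~\ref{Lclem-higherdim}(c).)

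Next I would take expectations. Since $X_1,\dots,X_{k'}$ are independent, this yields
\[
  \E|\sL_m(\X_{k'})| \le \E\bigl(|\sL_m(\X_k)|\,m^{X_{k+1}+\cdots+X_{k'}}\bigr) = \E|\sL_m(\X_k)|\cdot\prod_{j=k+1}^{k'}\E\bigl(m^{X_j}\bigr).
\]
Because $X_j$ is Poisson with mean $1/j$, its probability generating function gives $\E(m^{X_j})=e^{(m-1)/j}$, so the product equals $\exp\bigl((m-1)\sum_{j=k+1}^{k'}1/j\bigr)$.

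Finally I would bound the harmonic sum by $\sum_{j=k+1}^{k'}1/j\le\int_k^{k'}\mathrm{d}t/t=\log(k'/k)$, which turns the product into at most $(k'/k)^{m-1}$ and finishes the proof.

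I do not expect any real obstacle here: the only points requiring a little care are applying the submultiplicativity of Lemma~\ref{Lclem-higherdim} to tuples with disjoint supports (so the split is lossless on the $\X_k$ part) and comparing the tail harmonic sum with the logarithm; both are entirely routine.
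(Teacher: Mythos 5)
Your proof is correct and follows essentially the same route as the paper: the paper invokes Lemma~\ref{Lclem-higherdim}(c) directly (which is exactly your decomposition via parts (a) and (b)), then uses independence, the Poisson generating function $\E\bigl(m^{X_j}\bigr)=e^{(m-1)/j}$, and the comparison $\sum_{j=k+1}^{k'}1/j\le\log(k'/k)$. Nothing is missing.
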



\begin{proof}
By Lemma~\ref{Lclem-higherdim}(c), we have
\begin{align*}
\E|\sL_m(\X_{k'})|
&\leq \E \left[ |\sL_m(\X_k)| m^{X_{k+1}+\cdots+X_{k'}}   \right]\\
&=\E|\sL_m(\X_k)|\prod_{j=k+1}^{k'} \E\left[ m^{X_j} \right] \\
&=\E|\sL_m(\X_k)|\prod_{j=k+1}^{k'} e^{(m-1)/j} .
\end{align*}
Since $\sum_{j=k+1}^{k'} 1/j \le \int_k^{k'} dt/t = \log (k'/k)$, the claimed result follows.
\end{proof}


We need some further notation in connection with type vectors $\c = (c_1,\dots,c_k)$. We define
\[
  S(\c) = \sum_{j=1}^k j c_j.
\]
If $\c = (c_1,\dots,c_n)$ is the cycle type of some $\pi\in\cS_n$ then note that $S(\c)=n$. Occasionally however we will keep track of cycle types of partial permutations, in which case $S(\c)$ can be thought of as the total length represented by $\c$. We define also $C^+(\c)$ to be the largest $j$ such that $c_j>0$, or else zero if none exists. Similarly we define $C^-(\c)$ to be the smallest $j$ such that $c_j>0$, else $\infty$ if none exists. If $\c$ is the cycle type of $\pi\in\cS_n$ then $C^+(\c)$ and $C^-(\c)$ are the lengths of respectively the longest and shortest cycles of $\pi$; we will take the liberty of also using the alternative notation $C^+(\pi)$ and $C^-(\pi)$ to denote the same quantities.

\begin{lemma}\label{lem-sum-gen-higherdim}\ 
\begin{enumerate}[label={\upshape(\alph*)}]
\item Suppose $j_1,\dots, j_h \leq k$ are distinct integers and $a_1,\dots, a_h$ are positive integers. Then
\[
\E \left[|\sL_m(\X_k)| X_{j_1}^{a_1} \cdots X_{j_h}^{a_h} \right]
	\leq \frac{e^{m(2^{a_1}+\cdots+2^{a_h})}}{j_1 \dots j_h} \E |\sL_m(\X_k)| .
\]
\item For each fixed $r\ge1$, we have that 
\[
\E \left[ |\sL_m(\X_k)| S(\X_k)^r \right] 
	\ll_{r,m} k^r \E |\sL_m(\X_k)| .
\]
\item For each fixed $r\ge1$, we have that
\[
\E \left[ \frac{|\sL_m(\X_k)|}{\max\{C^+(\X_k),k-S(\X_k)\}^r} \right] 
	\ll_{r,m} \frac{\E |\sL_m(\X_k)|}{k^r} .
\]
\end{enumerate}
\end{lemma}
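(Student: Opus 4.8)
The three parts are linked: (a) is the workhorse, and (b) and (c) follow by summing the estimate in (a) over suitable ranges of indices. So the plan is to prove (a) carefully and then harvest (b) and (c).

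For part (a), the key observation is a conditional-independence structure. By Lemma~\ref{Lclem-higherdim}(c) applied to the coordinates $j_1,\dots,j_h$, we have $|\sL_m(\X_k)| \leq |\sL_m(\X_k')| \cdot m^{X_{j_1}+\cdots+X_{j_h}}$, where $\X_k'$ agrees with $\X_k$ except that its $j_1,\dots,j_h$ coordinates are set to zero; crucially $\X_k'$ is independent of $X_{j_1},\dots,X_{j_h}$. Hence
\[
\E\left[|\sL_m(\X_k)| X_{j_1}^{a_1}\cdots X_{j_h}^{a_h}\right]
\leq \E|\sL_m(\X_k')| \cdot \prod_{\ell=1}^h \E\left[X_{j_\ell}^{a_\ell} m^{X_{j_\ell}}\right].
\]
Since $\E|\sL_m(\X_k')| \leq \E|\sL_m(\X_k)|$ (adding back nonnegative coordinates only enlarges the set, by Lemma~\ref{Lclem-higherdim}(a),(c)), it remains to bound each factor $\E[X^a m^X]$ for a Poisson variable $X$ with mean $1/j$. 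One computes, using the probability generating function $\E[t^X]=e^{(t-1)/j}$ and its derivatives, that $\E[X^a m^X] \leq \frac{1}{j}\,\E[(X+1)^{a-1} m^{X+1}]$-type recursions give a clean bound of the shape $\frac{1}{j} e^{m\cdot 2^a}$ (the factor $2^a$ absorbing the combinatorial blowup from differentiating $a$ times, the $e^m$ from $m^{X+1}$ evaluated near $X=0$, after using that $1/j\leq 1$). Multiplying the $h$ factors yields the stated bound. The main obstacle here is purely bookkeeping: getting the exponent $2^{a_1}+\cdots+2^{a_h}$ inside a single clean exponential rather than something messier. I would handle this by proving the one-variable inequality $\E[X^a m^X]\leq \frac{1}{j}e^{m2^a}$ as a standalone sublemma via the generating-function identity $\E[X^a t^X] = (t\,\partial_t)^a e^{(t-1)/j}$ and crude majorization of the resulting polynomial in $1/j$.

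For part (b), write $S(\X_k)^r = \left(\sum_{j=1}^k j X_j\right)^r = \sum j_1\cdots j_r X_{j_1}\cdots X_{j_r}$ over all $r$-tuples $(j_1,\dots,j_r)\in\{1,\dots,k\}^r$. Grouping repeated indices and applying part (a) to each multi-index pattern, each term contributes at most $\ll_{r,m} j_1\cdots j_r \cdot \frac{1}{j_1\cdots j_r}\,\E|\sL_m(\X_k)| = \E|\sL_m(\X_k)|$ after the $j$'s cancel (here the constant depends on $r,m$ through the $e^{m2^{a}}$ factors and the number of index patterns, both bounded in terms of $r$ and $m$). Summing over the $k^r$ tuples gives $\E[|\sL_m(\X_k)| S(\X_k)^r] \ll_{r,m} k^r\,\E|\sL_m(\X_k)|$, as claimed.

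For part (c), the denominator $\max\{C^+(\X_k),\,k-S(\X_k)\}^r$ is at least $\tfrac12(C^+(\X_k)+(k-S(\X_k)))$ up to constants when the latter is positive; note $C^+(\X_k)\geq k - S(\X_k)$ is \emph{not} automatic, but the max being large forces at least one of the two to be $\gtrsim k$ on the event that drives the expectation, and one can split into the event $\{S(\X_k)\leq k/2\}$ (where $k-S(\X_k)\geq k/2$ handles it trivially) and its complement. On the complementary event one uses that a large value of $S(\X_k)$ is itself unlikely and controlled: more precisely, $\frac{1}{\max\{\cdot\}^r} \leq \frac{2^r}{k^r} + (\text{something}) \cdot \frac{S(\X_k)^r}{k^{2r}}$ or a similar interpolation, reducing (c) to (b). Alternatively, and more cleanly, bound $\frac{1}{\max\{C^+,k-S\}^r}$ pointwise: either $k-S(\X_k)\geq k/2$ and the bound is $\leq (2/k)^r$, or $S(\X_k)> k/2$ in which case $S(\X_k)^r/(k/2)^r \geq 1$, so $\frac{1}{\max\{\cdot\}^r}\leq 1 \leq 2^r S(\X_k)^r/k^r$, and we may further divide by $C^+(\X_k)^r\geq 1$; combining, $\frac{1}{\max\{C^+(\X_k),k-S(\X_k)\}^r} \leq \frac{2^r}{k^r} + \frac{2^r S(\X_k)^r}{k^r C^+(\X_k)^r}$, wait—this still needs the extra $k^{-r}$, so instead I would use $S(\X_k)\leq k\cdot C^+(\X_k)$ (total length is at most longest cycle times number of cycles, and the number of cycles is at most... ) — the cleanest route is simply: on $\{S(\X_k)>k/2\}$ we have $S(\X_k)>k/2$ so $\frac{1}{\max\{\cdot\}^r}\le \frac{2^r}{k^r}\cdot\frac{(2S(\X_k)/k)^r}{\max\{\cdot\}^r}\cdot\frac{k^r}{2^r}$ — getting circular; the honest plan is to bound $\frac{1}{\max\{\cdot\}^r}\le \frac{2^r}{k^r}\left(1 + \left(\tfrac{S(\X_k)}{k}\right)^r \mathbf 1_{S(\X_k)>k/2}\right)$, which after multiplying by $|\sL_m(\X_k)|$ and taking expectations reduces (c) to the trivial bound plus part (b), yielding $\ll_{r,m} \E|\sL_m(\X_k)|/k^r$. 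I expect part (c) to be the fiddliest step, precisely because reconciling the two quantities inside the max requires a short case analysis rather than a one-line reduction.
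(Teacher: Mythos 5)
Parts (a) and (b) of your plan coincide with the paper's argument: zero out the coordinates $j_1,\dots,j_h$, use Lemma~\ref{Lclem-higherdim}(c) plus independence, and bound $\E[X_j^a m^{X_j}]\le e^{2^a m}/j$ (the paper gets this by the crude estimate $r^a\le 2^{ar}$ inside the Poisson series rather than by generating functions, a cosmetic difference). In (b), your claim that after grouping repeated indices ``the $j$'s cancel'' so that every term contributes $O_{r,m}(1)\,\E|\sL_m(\X_k)|$ is only correct for tuples of distinct indices: if an index $j$ occurs $a_j\ge 2$ times, part (a) supplies just one factor $1/j$, so that term costs an extra $\prod_j j^{\max(0,a_j-1)}\le k^{r-h}$ ($h$ the number of distinct indices), which is offset by there being only $O_r(k^h)$ such tuples. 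This is exactly the paper's $k^{\max(0,a_j-1)}$ bookkeeping, so (b) is repairable with no new idea.

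Part (c), however, has a genuine gap. Your final pointwise inequality $\frac{1}{\max\{C^+(\X_k),k-S(\X_k)\}^r}\le \frac{2^r}{k^r}\bigl(1+(S(\X_k)/k)^r 1_{S(\X_k)>k/2}\bigr)$ is false: for the realization $\c$ with $c_1=k$ and $c_j=0$ otherwise, $S(\c)=k$ and $C^+(\c)=1$, so the left side equals $1$ while the right side is $2^{r+1}/k^r$; the same configuration defeats your earlier variant with $S(\X_k)^r/k^{2r}$. More fundamentally, no deterministic bound involving only $S(\X_k)$ and $k$ can work, since the max can equal $1$ while $S(\X_k)=k$; the needed saving on the event $\{S(\X_k)>k/2,\ C^+(\X_k)\ \text{small}\}$ comes from the improbability of such configurations under the Poisson measure, so the reduction cannot go through part (b) alone (which carries no information about $C^+$). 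The paper's route supplies the missing step: after disposing of $\{S(\X_k)\le k/2\}$ trivially, insert $1\le (2S(\X_k)/k)^{r+1}$ on the complement, so it suffices to bound $k^{-r-1}\,\E\bigl[|\sL_m(\X_k)|\,S(\X_k)^{r+1}\,(C^+(\X_k))^{-r}\,1_{C^+(\X_k)>0}\bigr]$; then decompose according to the value $\ell$ of $C^+(\X_k)$, on which event $|\sL_m(\X_k)|=|\sL_m(\X_\ell)|$ and $S(\X_k)=S(\X_\ell)$, bound the indicator by $X_\ell$, and use the strengthened form of (b), namely $\E[|\sL_m(\X_\ell)|\,S(\X_\ell)^{r+1}X_\ell]\ll_{r,m}\ell^r\,\E|\sL_m(\X_\ell)|$, where the extra factor $X_\ell$ gains (via part (a)) precisely the factor $1/\ell$ that turns $\ell^{r+1}$ into $\ell^{r}$. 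Summing $k^{-r-1}\sum_{\ell\le k}\E|\sL_m(\X_\ell)|\le k^{-r}\E|\sL_m(\X_k)|$ finishes; note that the exponent $r+1$ (rather than $r$) on $S$ is what absorbs the sum over $\ell$. This conditioning on $C^+$, together with the $X_\ell$-weighted moment bound, is the idea absent from your proposal.
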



\begin{proof} (a) Define $\X'_k$ by putting $X'_{j_1}=\cdots=X'_{j_h}=0$ and $X'_j=X_j$ for all other $j$. By Lemma~\ref{Lclem-higherdim}(c), we have $ |\sL_m(\X_k)| \leq |\sL_m(\X'_k)| m^{X_{j_1}+\cdots+X_{j_h}}.$ Thus by independence
\[
\E \left[ |\sL_m(\X_k)|X_{j_1}^{a_1}\cdots X_{j_h}^{a_h} \right]
	\leq \E\left[ |\sL_m(\X'_k)|\right] 
		\prod_{i=1}^h \E \left[X_{j_i}^{a_i} m^{X_{j_i}}\right].
\]
The result follows immediately from this, the observation that $ \E|\sL_m(\X'_k)| \leq \E|\sL_m(\X_k)|$, and the bound
\[
	\E \left[X_j^a m^{X_j} \right]
		= e^{-1/j}\sum_{r=1}^\infty r^a \frac{(m/j)^r}{r!}
		 \leq	\sum_{r=1}^\infty 2^{ar} \frac{m^r/j}{r!} 
		 \leq \frac{e^{2^am}}{j} .
\]

\medskip

(b) By the multinomial theorem and part (a), we have that
\als{
\E \left[ |\sL_m(\X_k)| S(\X_k)^r \right] 
	&=\sum_{a_1+\cdots+a_k=r}
		 \binom{r}{a_1,\dots,a_k}\E \left[|\sL_m(\X_k)|\prod_{j=1}^k (jX_j)^{a_j} \right]  \\
	&\ll_{r,m} \E\left[ |\sL_m(\X_k)| \right]
			\sum_{a_1+\cdots+ a_k=r} \prod_{j=1}^k k^{\max(0,a_j-1)}.
}
Let $J$ be the set of indices $i$ such that $a_i\neq 0$. For each $J$, the product on the right side above is $k^{r-|J|}$, and there are $O_r(1)$ choices for the numbers $a_i$, $i\in J$, with sum $r$. For each $j\in \{1,2,\ldots,r\}$, there are $\le k^j$ subsets $J\subset \{1,\ldots,k\}$ of cardinality $j$. Thus the sum above over $a_1,\ldots,a_r$ is $O(k^r)$, as claimed.
\medskip

(c) We have that
\[
\E \left[ \frac{|\sL_m(\X_k)|}{\max\{C^+(\X_k),k-S(\X_k)\}^r} \right] 
	\ll_r \frac{\E|\sL_m(\X_k)|}{k^r}  + \E \left[ \frac{|\sL_m(\X_k)|}{(C^+(\X_k))^r}  1_{S(\X_k)>k/2} \right] ,
\]
For the second summand, we have that
\als{
\E \left[ \frac{|\sL_m(\X_k)|}{(C^+(\X_k))^r}  1_{S(\X_k)>k/2} \right] 
	&\le \frac{2^{r+1}}{k^{r+1}} \E \left[ \frac{|\sL_m(\X_k)|S(\X_k)^{r+1}}{(C^+(\X_k))^r} 1_{C^+(\X_k)>0} \right] \\
	&= \frac{2^{r+1}}{k^{r+1}} 
		\sum_{\ell=1}^k \frac{1}{\ell^r} \E \left[ |\sL_m(\X_\ell)| S(\X_\ell)^{r+1}  1_{X_\ell\ge 1}\right]  \\
	&\le\frac{2^{r+1}}{k^{r+1}} 
		\sum_{\ell=1}^k \frac{1}{\ell^r} \E \left[ |\sL_m(\X_\ell)| S(\X_\ell)^{r+1} X_\ell \right] ,
}
by Lemma \ref{Lclem-higherdim}(a,b). Now by straightforward modification of the proof in part (b) we have
\[
  \E \left[ |\sL_m(\X_\ell)| S(\X_\ell)^{r+1} X_\ell\right] \ll_{r,m} \ell^r \E |\sL_m(\X_\ell)|,
\]
so
\[
  \E \left[ \frac{|\sL_m(\X_k)|}{(C^+(\X_k))^r}  1_{S(\X_k)>k/2} \right]
  \ll_{r,m} \frac1{k^{r+1}} \sum_{\ell=1}^k \E |\sL_m(\X_\ell)| \leq \frac{\E|\sL_m(\X_k)|}{k^r}.\qedhere
\]
\end{proof}


We also need to recall~\cite[Proposition~2.1]{EFG1}.

\begin{proposition}\label{sieve}
Let $c_1,\dots,c_k$ be nonnegative integers such that $n-S(\c)$ is at least $k+1$. Then the number of $\pi\in\cS_n$ with exactly $c_i$ $i$-cycles for each $i\leq k$ is
\[
	\order \frac{n!}{k\prod_{i=1}^k c_i! i^{c_i}}.
\]
\end{proposition}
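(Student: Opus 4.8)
\medskip
\noindent\textbf{Proof strategy.}\quad
The plan is to reduce the count to a single one-parameter probability and then pin that probability down by a self-improving recursion. Write $S=S(\c)=\sum_{i=1}^{k} i c_i$ for the number of points lying on cycles of length at most $k$, so that $M:=n-S(\c)$ points lie on longer cycles; by hypothesis $M\ge k+1$. To build a permutation $\pi\in\cS_n$ with exactly $c_i$ cycles of length $i$ for every $i\le k$ one chooses the $S$-element set carrying the short cycles, arranges those points into cycles of the prescribed type (a standard count, $S!/\prod_{i=1}^{k} i^{c_i}c_i!$, valid because $\sum_{i\le k} i c_i=S$), and then puts on the remaining $M$ points an \emph{arbitrary} permutation all of whose cycles have length $>k$. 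Denoting by $P_k(M)$ the number of such permutations of an $M$-set and setting $q_k(M)=P_k(M)/M!$, the count therefore equals
\[
  \binom{n}{S}\,\frac{S!}{\prod_{i=1}^{k} i^{c_i}c_i!}\,P_k(M)=\frac{n!}{\prod_{i=1}^{k} i^{c_i}c_i!}\,q_k(M),
\]
and the proposition becomes the assertion that $q_k(M)\order 1/k$ whenever $M\ge k+1$: that is, a uniformly random permutation of $M\ge k+1$ points has no cycle of length $\le k$ with probability of order $1/k$.

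\medskip
To prove this I would use the recursion obtained by examining the cycle through one fixed point. That cycle has some length $\ell$, necessarily $\ge k+1$, and deleting it leaves an $(M-\ell)$-set with the same property, so that for $M\ge k+1$
\[
  q_k(M)=\frac1M\sum_{\ell=k+1}^{M}q_k(M-\ell)=\frac{R(M-k-1)}{M},\qquad R(J):=\sum_{j=0}^{J}q_k(j),
\]
together with the initial data $q_k(0)=1$ and $q_k(j)=0$ for $1\le j\le k$ (so $R(J)=1$ for $0\le J\le k$). Summing this over $M$ puts the recursion into the shape $R(J)=1+\sum_{i=0}^{J-k-1}R(i)/(i+k+1)$ for $J\ge k$, which is the convenient form for induction.

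\medskip
Both halves of $q_k(M)\order 1/k$ then come out of one-line inductions on $R$, the point being to choose the inductive hypothesis so that dividing it by $i+k+1$ collapses to a constant. For the upper bound, prove $R(J)\le 1+J/(k+1)$ for all $J\ge0$: it is clear for $J\le k$, and for $J\ge k+1$ each term satisfies $R(i)/(i+k+1)\le\bigl(1+i/(k+1)\bigr)/(i+k+1)=1/(k+1)$, whence $R(J)\le 1+(J-k)/(k+1)\le 1+J/(k+1)$; feeding this back gives $q_k(M)=R(M-k-1)/M\le\tfrac1M\bigl(1+\tfrac{M-k-1}{k+1}\bigr)=\tfrac1{k+1}$. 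For the lower bound, prove $R(J)\ge(J+k+1)/(2k+1)$ for all $J\ge0$: again the case $J\le k$ is immediate since the right-hand side is then $\le1$, and for $J\ge k+1$ each term satisfies $R(i)/(i+k+1)\ge1/(2k+1)$, so $R(J)\ge1+(J-k)/(2k+1)=(J+k+1)/(2k+1)$; this yields $q_k(M)\ge\tfrac1M\cdot\tfrac{M}{2k+1}=\tfrac1{2k+1}$. Hence $\tfrac1{2k+1}\le q_k(M)\le\tfrac1{k+1}$ for every $M\ge k+1$, which is exactly $q_k(M)\order 1/k$, and the proposition follows.

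\medskip
The only delicate point is the lower bound --- really, guessing the right inductive hypothesis. The naive linear guess $R(J)\gg J/k$ does not close: the weights $1/(i+k+1)$ sum logarithmically, so one loses a $\log$ factor and never recovers a clean linear bound. The shifted hypothesis $R(J)\ge(J+k+1)/(2k+1)$, still of the correct order, is engineered so that the troublesome division in the recursion becomes the bare constant $1/(2k+1)$ and the induction telescopes. Everything else --- the short-versus-long decomposition, the cycle-deletion recursion, and the two inductions --- is routine.
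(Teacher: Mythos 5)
Your proof is correct, and it is worth noting that the paper does not actually prove this proposition: it is quoted verbatim from the first paper in the series (\cite[Proposition~2.1]{EFG1}), so your argument supplies a self-contained proof where the text only gives a citation. Your route is the natural one and matches the spirit of that source: split off the $S(\c)$ points carrying the prescribed short cycles (contributing the exact factor $n!/\prod_i c_i!\,i^{c_i}$ times $q_k(M)$ with $M=n-S(\c)$), and reduce everything to showing that a uniformly random permutation of $M\ge k+1$ points avoids all cycle lengths $\le k$ with probability $\asymp 1/k$. The way you establish this last estimate is a nice touch: the cycle-deletion recursion $q_k(M)=R(M-k-1)/M$ with $R(J)=\sum_{j\le J}q_k(j)$, together with the two inductive hypotheses $R(J)\le 1+J/(k+1)$ and $R(J)\ge (J+k+1)/(2k+1)$, is engineered so that each division by $i+k+1$ collapses to a constant, and it yields the clean uniform bounds $1/(2k+1)\le q_k(M)\le 1/(k+1)$ for every $M\ge k+1$ (with equality $q_k(k+1)=1/(k+1)$, so the upper bound is sharp). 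This is at least as elementary as, and arguably cleaner than, appealing to known asymptotics for permutations without short cycles, since uniformity down to $M=k+1$ comes for free and the implied constants are explicit. I verified the bookkeeping: the recursion $R(J)=1+\sum_{i=0}^{J-k-1}R(i)/(i+k+1)$ for $J\ge k$ is correct given $q_k(j)=0$ for $1\le j\le k$, both inductions close, and the reduction step correctly forces the complementary $M$ points to carry only cycles of length $>k$, so no gap remains.
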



We are now ready to prove Proposition~\ref{local-global}. In keeping with the analogy with analytic number theory, in the proof we will speak about ``factorizations'' $\pi = \pi_1 \cdots \pi_m$. By this we mean simply that $\pi$ has fixed sets $A_1,\dots,A_m$ such that $\pi_i = \pi|_{A_i}$ for each $i$. We may think of $\pi_1,\dots,\pi_m$ as partially defined permutations, and we define their cycle types accordingly. Note in this connection that if $\c_i$ is the cycle type of $\pi_i$ then $S(\c_i) = |A_i|$.


\subsection{The lower bound in Proposition~\ref{local-global}}

Recall that $k=k_1\le k_2\le\cdots \le k_m$ and that $k_{m-1}\le ck$. Assume that $n$ is sufficiently large depending on $m$ and $c$. Let $M=\lceil 2e^{2m}\rceil$ and $h=\fl{k/(4M)}$. We also fix integers $L_i=O_{m,c}(1)$ for $i\le m-1$. We focus our attention on permutations $\pi$ factorizing as
\[ 
\pi = \alpha\(\prod_{i=1}^{m-1}\prod_{j=1}^{L_i}\sigma_{ij}\)\beta,
\]
where every cycle of $\alpha$ has length $\leq h$, the total length of $\alpha$ is $|\alpha|<Mh$, each $\sigma_{ij}$ is a cycle of length in the range $Mh<|\sigma_{ij}|<3Mh$, and all cycles of $\beta$ have length $\ge3Mh$. If $\alpha$ is of type $\c=(c_1,\dots,c_h)$ and $|\sigma_{ij}|=\ell_{ij}$ for each $i,j$, then we further assume that
\eq{ellijsL}{
	\(k_i - \sum_{j=1}^{L_i} \ell_{ij}\)_{i=1}^{m-1}\in P_{m-1}\sL_m(\c).
}
This implies that $\pi$ is counted by $i(n,\k)$. Indeed, \eqref{ellijsL} is equivalent to the existence of non-negative integers $(x_{ij})_{i\le m-1,j\le h}$ such that 
\[
k_i = \sum_{j=1}^{L_i} \ell_{ij}+ \sum_{j=1}^h j x_{ij} \quad(1\le i\le m-1)
\]
and $\sum_{i=1}^{m-1}x_{ij}\le c_j$. This means that there are sets $A_1,\dots,A_{m-1}$ of sizes $k_1,\dots,k_{m-1}$, respectively, left invariant by $\pi$. We then define $A_m=\{1,\dots,n\}\setminus\bigcup_{j=1}^{m-1}A_j$, which is also kept invariant by $\pi$ and has size $k_m$. Thus $\pi$ as above is counted by $i(n,\k)$, as claimed.

Now, observe that \eqref{ellijsL} implies that $\sum_{j=1}^{L_i} \ell_{ij}\leq k_i$ for each $i\leq m-1$, so
\[
	n - |\alpha| - \sum_{i=1}^{m-1} \sum_{j=1}^{L_i} \ell_{ij} \geq n - |\alpha| - \sum_{i=1}^{m-1} k_i 
	= k_m - |\alpha| > k_{m-1} - Mh \geq k-Mh \ge 3Mh .
\]
Thus Proposition~\ref{sieve} applies and asserts that the number of such $\pi$ is at least
\eq{numberofpilij}{
 \gg \frac{n!}{L! k \prod_{i,j} \ell_{ij} \prod_{i=1}^h c_i! i^{c_i}} 
 	\gg_{L,m} \frac{n!}{h^{L + 1} \prod_{i=1}^h c_i! i^{c_i}},
}
where $L = \sum_{i=1}^{m-1} L_i$ is the total number of $\sigma_{ij}$.

Fix $\c$ such that $S(\c)\leq Mh$, and suppose $L_i$ and $(\ell_{ij})_{1\leq i\leq m-1, 1\leq j\leq L_i-1}$ have been chosen so that each $\ell_{ij}$ is in the range $Mh<\ell_{ij}<3Mh$ and
\eq{Li}{
	2Mh < k_i - \sum_{j=1}^{L_i-1} \ell_{ij} < 3Mh\qquad(1\leq i\leq m-1).
}
Then, since $S(\c)\leq Mh$, the number of $(\ell_{i,L_i})_{1\leq i\leq m-1}$ satisfying $Mh < \ell_{i,L_i} < 3Mh$ and \eqref{ellijsL} is precisely $|P_{m-1}\sL_m(\c)|=|\sL_m(\c)|$. Since $ck\ge k_i\geq k$ and $h\le k/(4M)$, we can choose $L_i \ll_{m,c} 1$ so that the number of $(\ell_{ij})_{1\leq j\leq L_i-1}$ satisfying \eqref{Li} is $\gg_{m,c} (Mh)^{L_i-1}$. Thus from \eqref{numberofpilij},
\als{
	i(n,\k)
	&\gg_{m,c} 
		\ssum{c_1,\dots,c_h\geq 0\\ S(\c)\leq Mh} 
			\frac{\(\prod_{i=1}^{m-1} h^{L_i-1}\) |\sL_m(\c)|}{h^{L+1} \prod_{i=1}^h c_i! i^{c_i}}\\
	&= \frac{1}{h^m} \ssum{c_1,\dots,c_h\geq 0\\ S(\c)\leq Mh}
		 \frac{|\sL_m(\c)|}{\prod_{i=1}^h c_i! i^{c_i}}\\
	&\order \frac{\E\left[ |\sL_m(\X_h)| 1_{S(\X_h)\leq Mh}\right]}  {h^{m-1}}.
}
To bound this from below, we use the inequality
\[ 
1_{S(\X_h) \leq Mh} \geq 1 - \frac{S(\X_h)}{Mh}.
\] 
By Lemma \ref{lem-sum-gen-higherdim}(a), we have
\[ 
\E \left[ |\sL_m(\X_h)| S(\X_h)  \right]     = 
	\sum_{j = 1}^{h} j \E \left[ |\sL_m(\X_h)| \cdot X_j \right] \leq h e^{2m} \E |\sL_m(\X_h)|,
\]
so
\[
	\E\left[|\sL_m(\X_h)| 1_{S(\X_h)\leq Mh}\right] 
		\geq \(1 - \frac{he^{2m}}{M}\) 
			\E|\sL_m(\X_h)| 
		\geq \frac12 \E|\sL_m(\X_h)|
\]
by our choice of $M$. Thus
\[
  i(n,\k) \gg_{m,c} \frac{\E |\sL_m(\X_h)|}{h^{m-1}}.
\]
The lower bound in Proposition~\ref{local-global} follows from the above inequality and Lemma~\ref{lem555-higherdim}.


\subsection{The upper bound in Proposition~\ref{local-global}} 
Put $k=k_1$ and $K=k_{m-1}$. Suppose that $\pi\in\cS_n$ has invariant sets of sizes $k_1,\dots,k_m$. Then
\[
  \pi=\pi_1 \pi_2 \cdots \pi_m,
\]
where $\pi_i$ is a product of disjoint cycles of total length $k_i$. Fix a permutation $\tau\in\CS_m$ such that $C^+(\pi_{\tau(1)})\le\cdots\le C^+(\pi_{\tau(m)})$ and, for each $i$, choose a cycle $\sigma_i$ of $\pi_{\tau(i)}$ of length $\ell_i = C^+(\pi_{\tau(i)})$. Note then that $\ell_1\leq k$ and $\ell_{m-1}\leq K$. We can then write $\pi$ as a product of disjoint permutations
\[
  \pi = \alpha\alpha'\sigma_1\cdots\sigma_{m-1}\beta,
\]
where $C^+(\alpha)\le \ell_1$, the permutations in $\alpha'$ have lengths in the range $(\ell_1,\ell_{m-1})$, and $C^-(\beta)\ge\ell_{m-1}$, with $\sigma_m$ being one of the cycles of $\beta$. 
If $\c=(c_1,\dots,c_K)$ and $\c'=(c_1',\dots,c_K')$ are the cycle types of $\alpha$ and $\alpha'$, respectively, then 
\eq{zerocj}{
c_{k+1}=\cdots=c_K=0,
}
\eq{zerocjpr}{
c_1'=\cdots=c_{\ell_1}'=0,
}
and also
\eq{elli-sLdbound}{
	(k_{\tau(i)} - \ell_i)_{i=1}^{m-1} \in P_{m-1}\sL_m(\c+\c').
}
Moreover, since all cycles of $\pi_{\tau(1)}$ other than $\sigma_1$ are cycles of $\alpha$, we must have $\ell_1+S(\c) \geq k$. Therefore
\eq{elli-sL1bound}{
	\ell_{m-1}\ge\cdots \ge \ell_1 \geq Q(\c) : = \max\{C^+(\c), k - S(\c) \}.
}
In particular, by~\eqref{zerocjpr} we have
\eq{zerocjpr2}{
c_1'=\cdots=c_{Q(\c)}'=0,
}

We can now show our hand. We will bound the number of choices for $\pi$ by choosing first $\tau\in\CS_m$, then $\c$ such that~\eqref{zerocj} holds, then $\c'$ such that \eqref{zerocjpr2} holds, $(\ell_i)$ such that~\eqref{elli-sL1bound} and \eqref{elli-sLdbound}  hold, and finally disjoint $\alpha,\alpha',\sigma_1,\dots,\sigma_{m-1},\beta$ of total length $n$ such that $\alpha$ has type $\c$, $\alpha'$ has type $\c'$, $\sigma_i$ is a cycle of length $\ell_i$ for each $i$, and every cycle of $\beta$ has length at least $\ell_{m-1}$ and at least one cycle of length $\ell_m$.

Given $\c,\c',\ell_1,\dots,\ell_{m-1}$, by Proposition~\ref{sieve} the number of choices for $\pi = \alpha\alpha'\sigma_1\cdots\sigma_{m-1}\beta$ is
\als{
&\ll	\frac{n!}{\ell_{m-1}} \prod_{j=1}^{K} \frac{1}{(c_j+c_j'+|\{i<m:\ell_i=j\}|)!j^{c_j+c_j'+|\{i<m:\ell_i=j\}|}}\\
&\leq \frac{n!}{\ell_1\cdots \ell_{m-2}\ell_{m-1}^2} \prod_{i=1}^k \frac{1}{c_i!i^{c_i}}
		\prod_{j=\ell_1+1}^K \frac{1}{c_j'!j^{c_j'}} .
}
Thus
\als{
 i(n,\k)
 &\ll \sum_{\tau\in\cS_m} 
 	\ssum{\c,\c' \\ \ell_1,\dots,\ell_{m-1} \\ \eqref{zerocj}, \eqref{zerocjpr2}, \eqref{elli-sL1bound}, \eqref{elli-sLdbound} } 
	\frac1{\ell_1\cdots\ell_{m-2}\ell_{m-1}^2} 
		\prod_{i=1}^k \frac{1}{c_i!i^{c_i}}
		\prod_{j=\ell_1+1}^K \frac{1}{c_j'!j^{c_j'}}    \nn
	&\le m! \ssum{\c,\c' \\ C^+(\c)\le k \\ c'_i=0, i\le Q(\c)}
		\frac{|\sL_m(\c+\c')|}{Q(\c)^m}
		\prod_{i=1}^k \frac{1}{c_i!i^{c_i}}
		\prod_{Q(\c)<j\le K} \frac{1}{c_j'!j^{c_j'}}    \nn
	&\le m! \ssum{\c\\ C^+(\c)\le k} 
		\frac{|\sL_m(\c)|}{Q(\c)^m}
		\prod_{i=1}^k \frac{1}{c_i!i^{c_i}} \ssum{\c'\\c'_i=0,i\le Q(\c)}
		\prod_{Q(\c)<j\le K} \frac{m^{c_j'}}{c_j'!j^{c_j'}}    ,
}
by Lemma \ref{Lclem-higherdim}(a,b). Calculating the sum over $\c'$, we find that
\als{
i(n,\k)
	\ll \ssum{c_1,\dots,c_k}
		\frac{|\sL_m(\c)|}{Q(\c)^{m}}
		\prod_{i=1}^k \frac{1}{c_i!i^{c_i}} \prod_{Q(\c)<j\le K} e^{m/j} 
	&\ll_m K^{m} \Big( \prod_{i=1}^k  e^{1/i} \Big) \E \left[ \frac{|\sL_m(\X_k)|}{Q(\X_k)^{2m}} \right] \\
	&\ll_m \frac{K^{m}}{k^{2m-1}} \E|\sL_m(\X_k)|,
}
by Lemma \ref{lem-sum-gen-higherdim}(c), which proves the upper bound in Proposition~\ref{local-global}.


\section{The upper bound in Theorem~\ref{main}}\label{sec:upper}

We now turn to the upper bound in Theorem~\ref{main}. Having proved our local-global principle Proposition~\ref{local-global}, our aim is now to prove that
\eq{globalupper}{
  \E |\sL_m(\X_k)| \ll_m k^{m-1-\delta_m} (\log k)^{-3/2}.
}
We begin with
\eq{Ldc}{
 \E |\sL_m(\X_k)| \order \frac{1}{k} \sum_{c_1,\ldots,c_k\geq 0} \frac{|\sL_m(\c)|}{\prod_{j=1}^k c_j! j^{c_j}}.
}
If we fix $r=c_1+\cdots+c_k$, then\footnote{To see the equality~\eqref{ca}, associate to each vector $\a$ the vector $\c$ with $c_i$ the number of indices $j$ such that $a_j=i$. Then $\sL_m(\c) = \sL_m^*(\a)$, $\prod_{j = 1}^k j^{c_j} = a_1 \cdots a_r$, and each $\c$ comes from $r!/(c_1!\cdots c_k!)$ different choices of $\a$. If one thinks of $c_1,\dots,c_k$ as representing the number of $j$-cycles for $j\leq k$ in a random permutation $\pi\in\cS_n$ (which is only really valid in the limit $n\to\infty$, with $k$ fixed), then one can think of $a_1,\dots,a_r$ as the lengths of the cycles of length at most $k$, in no particular order.}
\eq{ca}{
 \sum_{c_1+\cdots+c_k=r} \frac{|\sL_m(\c)|}{\prod_{j=1}^k c_j! j^{c_j}} = \frac{1}{r!} \sum_{a_1,\ldots,a_r=1}^k
 \frac{|\sL_m^*(\a)|}{a_1\cdots a_r},
}
where $\sL_m^*(\a)$ is the set of all $m$-tuples
\[
	\(\tsum_{j\in P_1} a_j, \dots, \tsum_{j\in P_m} a_j\)
\]
as $(P_1,\dots,P_m)$ runs over all ordered partitions of $\{1,\dots,r\}$. From~\eqref{Ldc} and~\eqref{ca} we then have
\eq{upper-start}{
\E |\sL_m(\X_k)| \order \frac{1}{k} \sum_r \frac{1}{r!}  \sum_{a_1,\dots,a_r=1}^k
 \frac{|\sL_m^*(\a)|}{a_1\cdots a_r}.
}

The most common way for $|\sL_m^*(\a)|$ to be small is for many of the $a_i$ to be small. To capture this, let $\tilde{a}_1\leq \tilde{a}_2\leq\cdots$ be the increasing rearrangement of the sequence $\a$ (the \emph{order statistics} of $\a$). Following the proof of Lemma \ref{Lclem-higherdim}(c), we find that
\[
  |\sL_m^*(\a)| =   |\sL_m^*(\tilde{\a})|\leq  |\sL_m^*(\tilde{a}_1,\dots,\tilde{a}_j,0,\dots,0)| \cdot m^{r-j} ,
\]
for any $j\in\{0,1,\dots,r\}$. Since $\sL_m^*(\tilde{a}_1,\dots,\tilde{a}_j,0,\dots,0) 
	\subset[0,\tilde{a}_1+\cdots+\tilde{a}_j]^r$, we find that
\eq{Gbound}{
  |\sL_m^*(\a)| \leq G(\a) 
  	:= \min_{0\leq j\leq r} \(1 + \tilde{a}_1 + \cdots + \tilde{a}_j \)^{m-1} m^{r-j} .
}
It is not unreasonable to expect that
\eq{up-approx}{
  \sum_{a_1,\ldots,a_r=1}^k \frac{G(\a)}{a_1\cdots a_r} \sim  \int_{[1,k]^r} \frac{G(\mathbf{t})}{t_1\cdots t_r} d\mathbf{t}
  = (\log k)^r  \int_{[0,1]^r} G(k^{\xi_1},\dots,k^{\xi_r}) d\boldsymbol{\xi},
}
where here we have enlarged the domain of $G$ to include $r$-tuples of positive real numbers. However, $G$ is not an especially regular function and so \eqref{up-approx} is perhaps too much to hope for. The function $G$ is, however, increasing in every coordinate, and we may exploit this to prove an approximate version of \eqref{up-approx}.

\begin{lemma}\label{lemma4.1}
For any $r\geq 1$, we have
\[
\sum_{a_1,\ldots,a_r=1}^k \frac{|\sL_m^*(\a)|}{a_1\cdots a_r} \ll m^r (1+\log k)^r r! \int_{\Omega_r}
\min_{0\leq j\leq r} m^{-j} (1 + k^{\xi_1} + \cdots + k^{\xi_j})^{m-1} d\bxi,
\]
where $\Omega_r = \{ \bxi : 0 \leq \xi_1 \leq \cdots \leq \xi_r \leq 1\}$.
\end{lemma}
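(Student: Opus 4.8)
The plan is to run the heuristic \eqref{up-approx} as a genuine upper bound. The first and trivial step is to invoke \eqref{Gbound} to replace $|\sL_m^*(\a)|$ by the majorant $G(\a)$, so that it suffices to bound $\sum_{a_1,\dots,a_r=1}^k G(\a)/(a_1\cdots a_r)$. The point of passing to $G$ is that, unlike $\sL_m^*$, the function $G$ is manifestly nice in one crucial respect: if we extend it to all of $(0,\infty)^r$ by the same formula (with $\tilde a_1\le\cdots\le\tilde a_r$ still the increasing rearrangement), then $G$ is symmetric and, since order statistics are coordinatewise monotone and $m\ge 2$, nondecreasing in each variable.

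The second step is the sum-to-integral comparison. The elementary inequality $1/a\le (\log 2)^{-1}\int_a^{a+1}\mathrm dt/t$, valid for every integer $a\ge 1$, combined with the monotonicity of $G$ (which lets us replace $G(\a)$ by $G(\mathbf{t})$ on the box $\prod_i[a_i,a_i+1]$), gives for each $\a$
\[
\frac{G(\a)}{a_1\cdots a_r}\le \frac{1}{(\log 2)^r}\int_{[a_1,a_1+1]\times\cdots\times[a_r,a_r+1]}\frac{G(\mathbf{t})}{t_1\cdots t_r}\,\mathrm d\mathbf{t}.
\]
Summing over $\a\in\{1,\dots,k\}^r$ and noting that these boxes tile $[1,k+1]^r$ up to a null set yields $\sum_{\a}G(\a)/(a_1\cdots a_r)\le (\log 2)^{-r}\int_{[1,k+1]^r}G(\mathbf{t})/(t_1\cdots t_r)\,\mathrm d\mathbf{t}$.

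The third step is bookkeeping. I would substitute $t_i=(k+1)^{\xi_i}$, turning the integral into $(\log(k+1))^r\int_{[0,1]^r}G\big((k+1)^{\xi_1},\dots,(k+1)^{\xi_r}\big)\,\mathrm d\bxi$, then use the symmetry of $G$ to reduce the cube integral to $r!$ times the integral over the ordered simplex $\Omega_r$. On $\Omega_r$ the arguments $(k+1)^{\xi_1}\le\cdots\le(k+1)^{\xi_r}$ are already sorted, so there $G\big((k+1)^{\xi_1},\dots,(k+1)^{\xi_r}\big)=m^r\min_{0\le j\le r}m^{-j}\big(1+(k+1)^{\xi_1}+\cdots+(k+1)^{\xi_j}\big)^{m-1}$. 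Since $(k+1)^\xi\le 2k^\xi$ for $\xi\in[0,1]$, we have $1+(k+1)^{\xi_1}+\cdots+(k+1)^{\xi_j}\le 2\big(1+k^{\xi_1}+\cdots+k^{\xi_j}\big)$, so this is at most $2^{m-1}m^r\min_{0\le j\le r}m^{-j}(1+k^{\xi_1}+\cdots+k^{\xi_j})^{m-1}$; combining this with $\log(k+1)\le 1+\log k$ and absorbing the constants ($(\log 2)^{-r}$ from the comparison, $2^{m-1}$ from the rescaling) delivers the asserted bound.

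The only genuinely delicate point is the monotonicity direction in the sum-to-integral step: because $G$ is \emph{increasing}, $G(\a)$ can only be dominated by $G(\mathbf{t})$ for $\mathbf{t}\ge\a$, which forces us to integrate over $\prod_i[a_i,a_i+1]$ rather than over $\prod_i[a_i-1,a_i]$. This is exactly why each coordinate equal to $1$ costs $\int_1^2\mathrm dt/t=\log 2$ and cannot be made small, i.e. why the sharp exponent is $(1+\log k)^r$ rather than $(\log k)^r$. Everything else — the change of variables, the reduction from the cube to $\Omega_r$, and the cleanup of $(k+1)^{\xi}$ versus $k^{\xi}$ — is routine.
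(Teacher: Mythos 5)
Your overall strategy is the same as the paper's (majorize $|\sL_m^*(\a)|$ by $G(\a)$ via \eqref{Gbound}, compare the sum to an integral using the coordinatewise monotonicity of $G$, change variables $t=k^{\xi}$, and symmetrize to pass to $\Omega_r$ at the cost of $r!$), but there is a genuine gap in the sum-to-integral step. Your comparison $1/a \leq (\log 2)^{-1}\int_a^{a+1} dt/t$ loses a constant factor \emph{per coordinate}: the true loss is $\prod_{i=1}^r \frac{1/a_i}{\log(1+1/a_i)}$, which is as large as $(\log 2)^{-r}$ (all $a_i=1$), and you explicitly propose to "absorb" this $(\log 2)^{-r}$ into the implied constant. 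That is not legitimate: the lemma asserts a bound for every $r\geq 1$ with the $r$-dependence spelled out explicitly as $m^r(1+\log k)^r r!$, so the implied constant must be uniform in $r$ (it may depend on $m$, as the paper's own $e^{m-1}$-type loss does, but not grow like $c^r$ with $c>1$). Your claim that this cost is "exactly why the sharp exponent is $(1+\log k)^r$ rather than $(\log k)^r$" is also off the mark: the $1+\log k$ comes from bounding the logarithmic length of the integration range (in the paper, $h_k\leq 1+\log k$), not from a per-coordinate constant. Moreover the loss is not cosmetic for the intended application: in Section~\ref{sec:upper} the lemma is summed over $r$ and the dominant terms have $r\approx r_*=\fl{\frac{m-1}{\log m}\log k}$, so an extra $(\log 2)^{-r}$ inflates the final bound by $k^{\frac{m-1}{\log m}\log\frac{1}{\log 2}}$, i.e.\ by about $k^{0.53}$ when $m=2$ — which swamps $\delta_2=0.086\ldots$ and destroys \eqref{globalupper}.

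The paper avoids this with a sharper choice of boxes, which is the one real trick in the proof: writing $h_a=\sum_{j\le a}1/j$, one has the \emph{exact} identity $1/a=\int_{\exp(h_{a-1})}^{\exp(h_a)} dt/t$, so the boxes $R(\a)=\prod_i[\exp(h_{a_i-1}),\exp(h_{a_i})]$ tile $[1,\exp(h_k)]^r$ with no per-coordinate loss at all; and since $h_a\geq\log(a+1)$ these boxes still lie "to the right" of $\a$ (after sorting, $\tilde t_i\geq \tilde a_i$), so the monotonicity argument $G(\a)\leq G(\mathbf{t})$ goes through exactly as in your unit-box version. The only losses are then a single factor $O_m(1)$ from comparing $e^{\xi h_k}$ with $k^{\xi}$ (using $h_k\le 1+\log k$), independent of $r$. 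If you replace your unit boxes and the $(\log 2)^{-1}$ inequality by this harmonic-sum tiling, the rest of your write-up (monotonicity direction, change of variables, symmetrization, and the final cleanup of constants depending only on $m$) is correct and yields the lemma as stated.
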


\begin{proof}
Write $h_a$ for the harmonic sum $\sum_{j=1}^a 1/j$. Motivated by the equality
\[
  \frac{1}{a} = \int_{\exp(h_{a-1})}^{\exp(h_a)} \frac{dt}{t} ,
\]
define the product sets
\[
 R(\a) = \prod_{i=1}^r \left[ \exp\( h_{a_i-1} \), \exp\( h_{a_i} \) \right].
\]
Then~\eqref{Gbound} implies that
\[
  \sum_{a_1,\ldots,a_r=1}^k \frac{|\sL_m^*(\a)|}{a_1\cdots a_r} \le \sum_{a_1,\ldots,a_r=1}^k \frac{G(\a)}{a_1\cdots a_r}
  =  \sum_{a_1,\ldots,a_r=1}^k G(\a) \int_{R(\a)} \frac{d \mathbf{t}}{t_1\cdots t_r}.
\]
Consider some $\mathbf{t} \in R(\a)$.  Writing $\tilde t_1 \leq \tilde t_2 \leq \dots \leq \tilde t_r$ for the increasing rearrangement of $\mathbf{t}$, and noting that $a_i<a_j$ implies $t_i\le t_j$, we have
\[
  \exp\( h_{\tilde{a}_i-1} \) \le \tilde{t_i} \le  \exp\( h_{\tilde{a}_i} \) \quad (1\leq i\leq r). 
\]
In particular, from the inequality $h_a \geq \log(a+1)$ we see that $\tilde{t}_i \ge \tilde{a}_i$ for all $i$. Hence
\[
  G(\a) \leq \min_{0\le j\le r} (1 + \tilde{t_1} + \cdots + \tilde{t_j})^{m-1} m^{r-j} = G(\mathbf{t})
\]
for all $\mathbf{t}\in R(\a)$. Thus
\begin{align*}
  \sum_{a_1,\dots,a_r=1}^k G(\a) \int_{R(\a)} \frac{d \mathbf{t}}{t_1\cdots t_r}
  &\leq \sum_{a_1,\dots,a_r=1}^k \int_{R(\a)} \frac{G(\mathbf{t})}{t_1\cdots t_r} d \mathbf{t}\\
  &= \int_{[1,\exp(h_k)]^r}
  \frac{G(\mathbf{t})}{t_1\cdots t_r} d \mathbf{t}\\
  &= h_k^r \int_{[0,1]^r} G(e^{\xi_1 h_k}, \dots, e^{\xi_r h_k}) d\bxi.
\end{align*}
The lemma now follows from the symmetry of the integrand and the bound $h_k \leq 1+\log k$.
\end{proof}

\def\Ur{U_r\(\frac{m-1}{\log m} \log k;\, m-1\)}
Having established Lemma~\ref{lemma4.1}, we can finish the proof of~\eqref{globalupper} by quoting~\cite[Lemma~4.4]{kouk1}. Indeed, in the notation of that paper
\[
  \int_{\Omega_r}
\min_{0\leq j\leq r} m^{-j} (1 + k^{\xi_1} + \cdots + k^{\xi_j})^{m-1} d\bxi = \Ur,
\]
and thus by~\eqref{upper-start} and Lemma~\ref{lemma4.1} we have
\[
  \E|\sL_m(\X_k)| \ll_m \frac{1}{k} \sum_r m^r (1+\log k)^r \Ur.
\]
Now, by~\cite[Lemma~4.4]{kouk1} we have
\[
  \Ur \ll \frac{1 + |r-r_*|^2}{(r+1)! (m^{r-r_*}+1)} 
\]
uniformly for $0 \leq r \leq 10(m-1)r_*$, where
\[
  r_* = \left\lfloor \frac{m-1}{\log m} \log k \right\rfloor.
\]
Otherwise, we use the trivial bound (from the $j=0$ term in the minimum)
\[
\Ur\le \frac{1}{r!} .
\]
Therefore
\als{
  k\cdot \E|\sL_m(\X_k)| 
  	&\ll_m 
  	\sum_{0\le r\le r*} \frac{m^r (1+\log k)^r(1+|r-r^*|^2)}{(r+1)!} \\
  	&\quad+ \sum_{r*<r\le 10(m-1)r_*} \frac{m^{r_*} (1+\log k)^r(1+|r-r^*|^2)}{(r+1)!} \\
  	&\quad + \sum_{r> 10(m-1)r_*} \frac{m^r (1+\log k)^r}{r!} \\
	&\ll \frac{m^{r_*} (1+\log k)^{r_*}}{(r_*+1)!} ,
}
since $10(m-1)r_* \ge 5 m(1+\log k)$ for large enough $k$ in terms of $m$. Stirling's formula then completes the proof of \eqref{globalupper} and thus that of the upper bound in Theorem~\ref{main}.


\section{The lower bound in Theorem~\ref{main}}\label{sec:lower}

We now turn to the lower bound in Theorem~\ref{main}. Having proved our local-global principle Proposition~\ref{local-global}, our aim is now to prove that
\eq{globallower}{
  \E |\sL_m(\X_k)| \gg_m k^{m-1-\delta_m} (\log k)^{-3/2}.
}

\subsection{A double application of H\"older's inequality}

We begin as in Section~\ref{sec:upper} with~\eqref{upper-start}, or rather with a slight variant. Let
\[
J = \fl{\log k},
\]
suppose that $\b = (b_j)_{1\leq j\le J}$ is a vector of arbitrary nonnegative integers, set
\[
r= b_1+\cdots+b_J,
\]
and consider that part of the sum in~\eqref{Ldc} in which
\eq{cj-condition}{
 \sum_{i\in[e^{j-1},e^j)} c_i = b_j \quad (1\leq j \leq J), \qquad c_i=0 \ (i \geq e^J).
}
For each $j\ge 1$, $b_j$ represents the number of cycles in the interval $[e^{j-1},e^j)$.
By arguing just as in the derivation of~\eqref{upper-start}, we have
\eq{lower-start}{
  \sum_{\substack{c_1,\dots,c_k\geq0\\\eqref{cj-condition}}} \frac{|\sL_m(\c)|}{\prod_{i=1}^k c_i! i^{c_i}} 
  	= \frac{1}{\prod_j b_j!} \sum_{\a \in \DD(\b)} \frac{|\sL_m^*(\a)|}{a_1\cdots a_r},
}
where
\[
  \DD(\b) = \prod_{j=1}^J [e^{j-1}, e^j)^{b_j} .
\]
that is, the first $b_1$ conponents of $\a \in  \DD(\b)$ are in $[1,e)$ and are otherwise unordered, the next
$b_2$ components of $\a \in  \DD(\b)$ are in $[e,e^2)$, etc.
For fixed $\b\in\Z_{\ge0}^J$ and $s\in\{1,\dots,r\}$,  define $j_s   \in\{1,\dots,J\}$ by 
\[
b_1+\cdots+b_{j_s-1}<s\le b_1+\cdots + b_{j_s} ,
\]
so that if $\a\in\DD(\b)$, then $a_s\in[e^{j_s-1},e^{j_s})$. Finally, let 
\[
\lambda_j = \sum_{e^{j-1}\le a<e^j} \frac{1}{a} = 1 + O(e^{-j}) \qquad (j\ge 1).
\]


\begin{lemma}\label{doubleholder}
For any $\b = (b_1,\dots,b_J)$ and $p\in(1,2]$ we have
\[
  \sum_{\a\in\DD(\b)} \frac{|\sL_m^*(\a)|}{a_1\cdots a_r} 
  \geq \frac{ m^{pr/(p-1)} \prod_{j=1}^J \lambda_j^{2b_j}}{\(\sum_\CP \(\sum_\CQ S(\CP,\CQ)\)^{p-1}\)^\frac1{p-1}},
\]
where the sums run over all ordered partitions $\CP=(P_1,\ldots,P_m)$ and $\CQ=(Q_1,\ldots,Q_m)$ of $\{1,\ldots,r\}$, and $S(\CP,\CQ)$ is the sum of $1/(a_1\cdots a_r)$ over all $\a\in\DD(\b)$ such that $\sum_{s\in P_i} a_s = \sum_{s\in Q_i} a_s$ for each $i=1,\dots,m$.
\end{lemma}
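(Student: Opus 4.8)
The plan is to reduce everything to the fiber structure of the ``value'' map on ordered partitions and then apply H\"older's inequality twice. Write $w_\a=1/(a_1\cdots a_r)$ for $\a\in\DD(\b)$, and for ordered partitions $\CP=(P_1,\dots,P_m)$ and $\CQ=(Q_1,\dots,Q_m)$ of $\{1,\dots,r\}$ say that $\CP$ and $\CQ$ are \emph{$\a$-equivalent} if $\sum_{s\in P_i}a_s=\sum_{s\in Q_i}a_s$ for all $i=1,\dots,m$; let $N_\a(\CP)$ be the size of the $\a$-equivalence class of $\CP$. Three identities are then immediate: there are $m^r$ ordered partitions in all; $|\sL_m^*(\a)|$ is the number of $\a$-equivalence classes, so $|\sL_m^*(\a)|=\sum_\CP N_\a(\CP)^{-1}$ (each class of size $N$ contributes $N\cdot N^{-1}=1$); $S(\CP,\CQ)=\sum_{\a:\ \CP\sim_\a\CQ}w_\a$, so $\sum_\CQ S(\CP,\CQ)=\sum_\a w_\a N_\a(\CP)$; and $\sum_{\a\in\DD(\b)}w_\a=\prod_{j=1}^J\lambda_j^{b_j}=:W$. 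Hence the quantity to be bounded below is
\[
  \Sigma:=\sum_{\a\in\DD(\b)}\frac{|\sL_m^*(\a)|}{a_1\cdots a_r}=\sum_{\CP}\sum_{\a\in\DD(\b)}w_\a\,N_\a(\CP)^{-1}.
\]

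Next I would record the reverse form of H\"older's inequality: for finite families of positive reals $(u_i),(Z_i)$ and any $s>0$,
\[
  \sum_i u_iZ_i^{-s}\ \ge\ \frac{\bigl(\sum_i u_i\bigr)^{1+s}}{\bigl(\sum_i u_iZ_i\bigr)^{s}},
\]
which follows from ordinary H\"older with exponents $1+s$ and $(1+s)/s$ applied to the splitting $u_i=(u_iZ_i^{-s})^{1/(1+s)}(u_iZ_i)^{s/(1+s)}$. Applying this to the double sum for $\Sigma$, with the index $i$ running over pairs $(\CP,\a)$, weights $u_{(\CP,\a)}=w_\a$ (so that $\sum_{(\CP,\a)}u_{(\CP,\a)}=m^rW$), $Z_{(\CP,\a)}=N_\a(\CP)^{p-1}$, and $s=1/(p-1)$ (so $Z^{-s}=N_\a(\CP)^{-1}$), I obtain
\[
  \Sigma\ \ge\ \frac{(m^rW)^{p/(p-1)}}{\bigl(\sum_{\CP}\sum_\a w_\a\,N_\a(\CP)^{p-1}\bigr)^{1/(p-1)}}.
\]

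Finally I would bound the denominator, which is where the hypothesis $p\le 2$ enters. Since $0<p-1\le1$, the map $t\mapsto t^{p-1}$ is concave, so by Jensen's inequality (equivalently H\"older with exponents $1/(p-1)$ and $1/(2-p)$) applied for each fixed $\CP$ with the probability weights $w_\a/W$,
\[
  \sum_\a w_\a\,N_\a(\CP)^{p-1}\ \le\ W^{2-p}\Bigl(\sum_\a w_\a\,N_\a(\CP)\Bigr)^{p-1}=W^{2-p}\Bigl(\sum_\CQ S(\CP,\CQ)\Bigr)^{p-1}.
\]
Summing over $\CP$ and substituting into the previous display, the powers of $W$ combine via $\tfrac{p}{p-1}-\tfrac{2-p}{p-1}=2$; since $W^2=\prod_j\lambda_j^{2b_j}$ and $m^{rp/(p-1)}=m^{pr/(p-1)}$, this is exactly the claimed bound.

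The only place requiring genuine care is getting the direction of each H\"older application right. The first must be the \emph{reverse} form, because the exponent $-1/(p-1)$ on $N_\a(\CP)$ is negative; and the second, the concavity step, relies crucially on $p\le 2$ — for $p>2$ the function $t\mapsto t^{p-1}$ is convex and Jensen points the wrong way, which is precisely why the lemma is restricted to $p\in(1,2]$. Checking the three combinatorial identities above is routine bookkeeping about the partition of the $m^r$ ordered partitions into $\a$-equivalence classes.
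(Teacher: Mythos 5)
Your proof is correct and is essentially the paper's argument in different packaging: your quantity $N_\a(\CP)$ is the paper's multiplicity $R\(\a,\(\tsum_{s\in P_i}a_s\)_i\)$, so your reverse-H\"older step with weights $w_\a$ and $Z=N_\a(\CP)^{p-1}$ is their first H\"older application (your denominator $\sum_\CP\sum_\a w_\a N_\a(\CP)^{p-1}$ equals their $\sum_\a\sum_\x w_\a R(\a,\x)^p$), and your Jensen/concavity step is their second H\"older with exponents $1/(p-1)$ and $1/(2-p)$, with $p\le 2$ entering at the same point.
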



\begin{proof}
Given $\a\in\N^r$ and $\x \in \Z_{\ge0}^m$, let $R(\a, \x)$ be the number of partitions $\CP$ such that $x_i = \sum_{s\in P_i} a_s$ for each $i=1,\dots,m$. Then the support of $R(\a,\x)$ is $\sL_m^*(\a)$, and $\sum_\x R(\a,\x) = m^r$, the total number of partitions $\CP$. Thus, H\"older's inequality yields that
\al{
m^r \prod_{j=1}^J \lambda_j^{b_j} 
    &= \sum_{\a\in\DD(\b)} \sum_{\x\in\sL_m^*(\a)} \frac{R(\a,\x)}{a_1\cdots a_r}   \nn
    &\leq \(\sum_{\a\in\DD(\b)} \frac{|\sL_m^*(\a)|}{a_1\cdots a_r}\)^{1-1/p} 
    	\(\sum_{\a\in\DD(\b)} \sum_\x \frac{R(\a,\x)^p}{a_1\cdots a_r} \)^{1/p}.\label{firstholder}
}
Meanwhile,
\als{
  \sum_\x R(\a,\x)^p
    &= \sum_\x R(\a,\x)^{p-1} \sum_\CP 1_{x_i = \sum_{s\in P_i} a_s~\text{for}~i=1,\dots,m}\\
    &= \sum_\CP R\(\a,\(\tsum_{s\in P_i} a_s\)_i\)^{p-1},
}
so by another application of H\"older's inequality we have
\als{
  \sum_{\a\in\DD(\b)} \sum_\x \frac{R(\a,\x)^p}{a_1\cdots a_r}
    &= \sum_\CP \sum_{\a\in\DD(\b)} \frac{R\(\a,\(\tsum_{s\in P_i} a_s\)_i\)^{p-1}}{a_1\cdots a_r}\\
    &\leq \sum_\CP \(\sum_{\a\in\DD(\b)} \frac{R\(\a,\(\tsum_{s\in P_i} a_s\)_i\)}{a_1\cdots a_r}\)^{p-1} \(\sum_{\a\in\DD(\b)} \frac1{a_1\cdots a_r}\)^{2-p}\\
    &= \sum_\CP \(\sum_\CQ S(\CP,\CQ)\)^{p-1} \prod_{j=1}^J \lambda_j^{b_j(2-p)}.
}
The lemma follows from this and~\eqref{firstholder}.
\end{proof}


\subsection{Bounding the low moment}

Next, fix $\CP$ and $\CQ$ and consider $S(\CP,\CQ)$, the sum of $1/(a_1\cdots a_r)$ over all solutions $\a$ to the linear system
\[
  \sum_{s\in P_i} a_s = \sum_{s\in Q_i} a_s
  	,\qquad(i=1,\dots,m),
\]
or, equivalently,
\eq{ajsystem}{
  \sum_{s\in P_i\setminus Q_i} a_s - \sum_{s\in Q_i\setminus P_i} a_s = 0,
  	\qquad(i=1,\dots,m).
}
In order to bound $S(\CP,\CQ)$ we will in effect upper-triangularize this system. This process admits a convenient combinatorial description. Form a weighted graph $\G$ with vertices $\{1,\dots,m\}$ by placing an edge between $i_1$ and $i_2$ whenever the equations in~\eqref{ajsystem} indexed by $i_1$ and $i_2$ have a variable in common, i.e., whenever
\newcommand{\symdiff}{\vartriangle}
\[
  (P_{i_1}\symdiff Q_{i_1}) \cap (P_{i_2} \symdiff Q_{i_2}) \neq \emptyset,
\]
where $A \symdiff B := (A \cup B) \setminus (A \cap B)=(A\setminus B) \cup (B\setminus A)$.
Then we assign to the edge $e=\{i_1,i_2\}$ the \emph{label}
\[
s_e = \max (P_{i_1}\symdiff Q_{i_1}) \cap (P_{i_2} \symdiff Q_{i_2})
\]
and \emph{weight}
\[
w_e = j_{s_e}. 
\]
Note that if $P_i=Q_i$ for some $i$, then the vertex labeled $i$ is isolated in the graph $\G$.
Also, note that the labels must be distinct, while the weights need not be.
If $I_n$, $1\le n\le N$, are the components of $\CG$, we then find that $P_{i_1}\cap Q_{i_2}=\emptyset$ whenever $i_1\in I_{n_1}$ and $i_2\in I_{n_2}$ for $n_1\neq n_2$. Consequently,
\eq{union-cond}{
  \bigcup_{i\in I_n} P_i = \bigcup_{i\in I_n} Q_i\quad(1\le n\le N) ,
}
so that the more components $\CG$ has, the more relations we have between the partitions $\CP$ and $\CQ$.

For a subgraph $\CH\subset \CG$ (a subset of the vertices and edges of $\G$), we denote by $A(\CH)$ the set of labels occurring in $\CH$. We show in the next lemma that, given a subforest $\CF\subset \G$ (that is to say, an acyclic subgraph of $\G$ or, equivalently, a disjoint union of subtrees of $\G$), the variables $(a_s)_{s\in A(\CF)}$ are determined by $(a_s)_{s\notin A(\CF)}$ and~\eqref{ajsystem}. Moreover, the quality of the bound implied for $S(\CP,\CQ)$ is measured by the total weight of $\CF$.


\begin{lemma}\label{forestbound}
If $\CF$ is a subforest of $\G$, then the variables $(a_s)_{s\in A(\CF)}$ are determined by $(a_s)_{s\notin A(\CF)}$ and~\eqref{ajsystem}. Consequently,
\[
  S(\CP,\CQ) \ll_m e^{-\sum_{s \in A(\CF)} j_s} \prod_{j=1}^J  \lambda_j^{b_j} .
\]
\end{lemma}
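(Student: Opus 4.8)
The plan is to upper-triangularize the system \eqref{ajsystem} along the forest $\CF$ and then sum the geometric-type series that results. First I would orient things: since $\CF$ is acyclic, I can choose for each component (subtree) of $\CF$ a root and orient all edges away from it. This gives each non-root vertex $v$ of $\CF$ a unique parent edge $e(v)$, and the map $v \mapsto s_{e(v)}$ is an injection from the non-root vertices of $\CF$ into $A(\CF)$; since $|A(\CF)|$ equals the number of edges of $\CF$, which equals the number of non-root vertices, this map is in fact a bijection. So it suffices to show that, processing the vertices of each subtree in an order from the leaves toward the root, the variable $a_{s_{e(v)}}$ is forced by \eqref{ajsystem} once all the other variables have been fixed.

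The key step is the elimination itself. Fix a leaf $v$ of $\CF$ (in the current subtree), with parent edge $e = \{v, v'\}$ and label $s_e = \max(P_v \symdiff Q_v)\cap(P_{v'}\symdiff Q_{v'})$. By the definition of $s_e$ as a \emph{maximum} over that intersection, and because the labels of distinct edges of $\G$ are distinct, the index $s_e$ does not appear as the label of any other edge; in particular it does not lie in $P_w \symdiff Q_w$ for any vertex $w$ adjacent to $v$ in $\G$ other than through an edge with label $\geq s_e$ — more precisely, after we have already eliminated the variables attached to edges processed earlier, $a_{s_e}$ occurs in equation $v$ of \eqref{ajsystem} (with coefficient $\pm 1$, since $s_e \in P_v \symdiff Q_v$) and the remaining variables in that equation are all among the $(a_s)_{s\notin A(\CF)}$ together with variables already solved for. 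Hence equation $v$ determines $a_{s_e}$ as an integer linear combination of previously-fixed variables. Iterating from the leaves inward through each subtree proves the first assertion: $(a_s)_{s\in A(\CF)}$ is a function of $(a_s)_{s\notin A(\CF)}$ (and the data $\CP,\CQ$).

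For the bound on $S(\CP,\CQ)$, recall $S(\CP,\CQ) = \sum_{\a} \frac{1}{a_1\cdots a_r}$ over $\a\in\DD(\b)$ solving \eqref{ajsystem}. I would split each variable's reciprocal: for $s\in A(\CF)$ use the crude bound $1/a_s \leq 1/e^{j_s-1} \ll e^{-j_s}$ (valid since $a_s \geq e^{j_s-1}$ for $\a \in \DD(\b)$), and for $s\notin A(\CF)$ keep $1/a_s$ and sum freely over $a_s \in [e^{j_s-1}, e^{j_s})$, contributing $\lambda_{j_s}$. Because the solution forces each $a_s$ with $s\in A(\CF)$, the sum over all of $\DD(\b)$-solutions is at most
\[
  S(\CP,\CQ) \;\ll_m\; \Big(\prod_{s\in A(\CF)} e^{-j_s}\Big)\prod_{s\notin A(\CF)} \lambda_{j_s}
  \;\leq\; e^{-\sum_{s\in A(\CF)} j_s}\prod_{j=1}^J \lambda_j^{b_j},
\]
using $\lambda_j \geq 1$ to replace $\prod_{s\notin A(\CF)}\lambda_{j_s}$ by the full product $\prod_j \lambda_j^{b_j}$, and absorbing the implied $\ll_m$ from the finitely many ($\leq m!$) choices of root/orientation and the constants in $1/e^{j_s-1}\ll e^{-j_s}$. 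This is exactly the claimed inequality.

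The main obstacle is the bookkeeping in the elimination step: one must verify carefully that when a leaf variable $a_{s_e}$ is eliminated, its coefficient in the corresponding equation of \eqref{ajsystem} is nonzero (this is immediate from $s_e \in P_v\symdiff Q_v$) and that no \emph{other} still-unsolved forest variable $a_{s'}$ with $s'\in A(\CF)$ re-enters that equation in a way that blocks solving for $a_{s_e}$ — here the maximality in the definition of $s_e$ together with the distinctness of edge labels is what makes the triangularization work. Everything after that is the routine reciprocal-splitting estimate above.
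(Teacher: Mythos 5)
Your route is the same as the paper's: triangularize \eqref{ajsystem} by peeling the forest from its leaves, then bound $S(\CP,\CQ)$ by freezing each forest-labelled variable at its least possible size $e^{j_s-1}$ and summing freely over the others. The second half is fine modulo one nitpick: $\lambda_j\ge 1$ is not actually true for every $j$ (it is only $1+O(e^{-j})$), but since you discard at most $m-1$ factors, each $\asymp 1$, the loss is $O_m(1)$ and is absorbed into $\ll_m$, exactly as in the paper.

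The justification you give for the elimination step, however, is off. What the induction needs is that for an edge $e'=\{i_1,i_2\}$ of $\CF$ the variable $a_{s_{e'}}$ occurs in no equation of \eqref{ajsystem} except those indexed by $i_1$ and $i_2$; then at a leaf $v$ the only forest variable appearing in equation $v$ is $a_{s_{e(v)}}$, and your leaf-to-root scheme goes through. You derive this from the maximality in the definition of $s_{e}$ together with the distinctness of edge labels, but that inference is a non sequitur: a priori $s_e$ could lie in $(P_v\vartriangle Q_v)\cap(P_w\vartriangle Q_w)$ for a third vertex $w$ without being the \emph{maximum} of that intersection, so knowing that $s_e$ is not the label of any other edge excludes nothing. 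The correct reason --- and the one the paper uses --- is simply that $P_1,\dots,P_m$ are pairwise disjoint and likewise $Q_1,\dots,Q_m$: each index $s$ lies in exactly one $P_i$ and exactly one $Q_{i'}$, so $a_s$ appears in at most the two equations indexed by $i$ and $i'$, and when $s=s_{e'}$ these are precisely the endpoints of $e'$. Maximality of the label plays no role in this lemma; it is needed only later, in the heaviest-forest exchange argument inside Lemma~\ref{sumPsumQbound}. With that one-line repair your proof is complete and coincides with the paper's.
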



\begin{proof} Write $A=A(\CF)$ for convenience. For the first part, first note that for any edge $e=\{i_1,i_2\} \in \CF$, the variable $a_{s_e}$ appears in the equations $\sum_{s\in P_i\setminus Q_i} a_s - \sum_{s\in Q_i\setminus P_i} a_s=0$ for $i=i_1$ and $i=i_2$, and no others, since the sets $P_1,\dots,P_m$ are pairwise disjoint, and the same is true for the sets $Q_1,\dots,Q_m$. Thus, if $i$ is a leaf of $\CF$ and $e$ is the edge of $\CF$ incident with $i$, then, out of all the variables $(a_s)_{s \in W}$, the equation
\[
  \sum_{s\in P_i\setminus Q_i} a_s - \sum_{s\in Q_i\setminus P_i} a_s = 0
\]
involves only $a_{s_e}$, so indeed $a_{s_e}$ is determined by $(a_s)_{s\notin W}$ and~\eqref{ajsystem}. Next, remove $e$ from $\CF$ and continue inductively.

Now, since the variables $(a_s)_{s \in A}$ are determined by $(a_s)_{s\notin A}$ and~\eqref{ajsystem}, it follows that
\begin{align*}
  S(\CP,\CQ)
  = \sum_{\substack{\a\in\DD(\b)\\\eqref{ajsystem}}} \frac1{a_1\cdots a_r}
  \leq \sum_{\substack{a_s\in[e^{j_s-1},e^{j_s})\\ (s\notin A)}} 
  	\frac{1}{\prod_{s\in A} e^{j_s-1} \prod_{s\notin A} a_s}
  &= \frac1{\prod_{s\in A} e^{j_s-1} \lambda_{j_s}} \prod_{j=1}^J \lambda_j^{b_j} \\
  &\ll_m e^{-\sum_{s \in A} j_s} \prod_{j=1}^J  \lambda_j^{b_j}.\qedhere
\end{align*}
\end{proof}


\begin{figure}[t]
\tikzstyle{vertex}=[circle,fill=black,minimum size=5pt,inner sep=0pt]
\tikzstyle{selected vertex} = [vertex, fill=red!24]
\tikzstyle{edge} = [draw,thick,-]
\tikzstyle{weight} = [font=\small]
\tikzstyle{selected edge} = [draw,line width=5pt,-,red!50]
\tikzstyle{ignored edge} = [draw,line width=5pt,-,black!20]

\pgfdeclarelayer{background}
\pgfsetlayers{background,main}
\centering
\begin{tikzpicture}[scale=1.3, auto,swap]
    
    \foreach \pos/\name in {{(0,1)/a}, {(3,0)/b}, {(3,2)/c},
                            {(0,5)/d}, {(3,4)/e}, {(3,6)/f},
                            {(6,0)/g}, {(8,0)/h}, {(6,2)/i}, {(8,2)/j},
                            {(6,4)/k}, {(8,4)/l}, {(6,6)/m}, {(8,6)/n}}
        \node[vertex] (\name) at \pos {};
        
    \foreach \source/ \dest /\weight in {b/c/6, e/f/6, g/h/10, g/j/5, i/j/10, 
                                         m/n/10, k/l/10, k/n/5, k/m/5, n/l/3}
        \path[edge] (\source) -- node[weight] {$\weight$} (\dest);
\end{tikzpicture}
\caption{A graph $\mathcal{G}$ and a heaviest subforest. Edge weights are indicated.}
\end{figure}
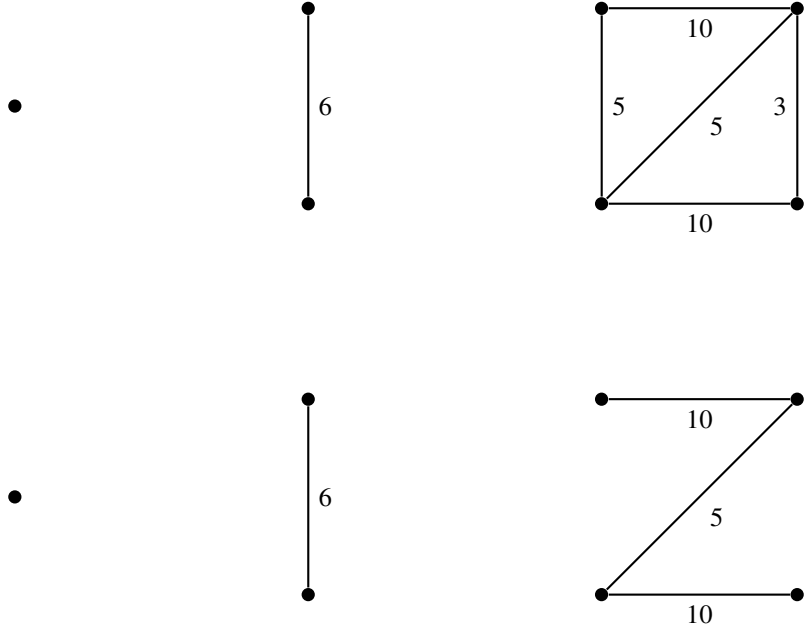


To apply Lemma~\ref{forestbound} most profitably, we should choose a subforest $\CF\subset\G$ which maximizes
the total weight 
\[
W(\CF) := \sum_{s\in A(\CF)} j_s.
\]
Such a $\CF$ will necessarily be a spanning subforest, and thus have the same
number of connected components as $\G$.  See e.g. Figure 1.


\begin{lemma}\label{sumPsumQbound}
\[
\sum_\CP \(\sum_\CQ S(\CP,\CQ)\)^{p-1} \ll_m m^r \(\prod_{j=1}^J \lambda_j^{b_j(p-1)}\)
	 \(1+\sum_{j=1}^J 
	\(m^\frac{p-1}{m-1}\)^{b_{1}+\cdots+b_j} e^{-(p-1)j}\)^{m-1} .
\]
\end{lemma}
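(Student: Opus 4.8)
The plan is to derive the bound from the pointwise estimate of Lemma~\ref{forestbound} by a combinatorial count of the pairs $(\CP,\CQ)$, organized by the structure of the graph $\G(\CP,\CQ)$. First, for each pair I would take $\CF=\CF(\CP,\CQ)$ to be a maximum‑weight spanning subforest of $\G(\CP,\CQ)$, so that Lemma~\ref{forestbound} gives $S(\CP,\CQ)\ll_m e^{-W(\CF)}\prod_j\lambda_j^{b_j}$ with $W(\CF)=\sum_{s\in A(\CF)}j_s$. Pulling $\prod_j\lambda_j^{b_j}$ out of the $\CQ$‑sum and then out of the $(p-1)$‑st power turns it into $\prod_j\lambda_j^{b_j(p-1)}$, and the claim reduces to
\[
\sum_\CP\Bigl(\sum_\CQ e^{-W(\CF(\CP,\CQ))}\Bigr)^{p-1}\ll_m m^r\Bigl(1+\sum_{j=1}^J\bigl(m^{(p-1)/(m-1)}\bigr)^{\beta_j}e^{-(p-1)j}\Bigr)^{m-1},
\]
where $\beta_j=b_1+\cdots+b_j$.

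The heart of the argument is an exact description of $\sum_\CQ e^{-W(\CF(\CP,\CQ))}$ for a fixed $\CP$. I would build the maximum‑weight spanning forest by Kruskal's algorithm, processing the indices $s=r,r-1,\dots,1$ — equivalently, in order of non‑increasing weight $j_s$, ties inside a block being harmless — and adjoining the edge $\{i(s),i'(s)\}$ precisely when it joins two components of what has been built so far. A partition $\CQ$ is then recovered from its Kruskal forest $\CF$ (a forest on $\{1,\dots,m\}$ with $t\le m-1$ edges bearing distinct labels $\sigma_1>\cdots>\sigma_t$ from $\{1,\dots,r\}$, the $l$‑th of weight $j_{\sigma_l}$) together with the values $i'(s)$ at the non‑forest indices; the requirement that $\CF$ really be the Kruskal forest forces $i'(s)$ into the component of $i(s)$ in the truncated subforest $\CF_{>s}$ of forest edges with label $>s$. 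Since $\CF_{>s}$ has $t_s:=\#\{l:\sigma_l>s\}$ edges, each of its components has size at most $1+t_s$, and so
\[
\sum_\CQ e^{-W(\CF(\CP,\CQ))}\le\sum_{\CF}e^{-W(\CF)}\prod_{s\notin A(\CF)}(1+t_s),
\]
the sum being over forests $\CF$ consistent with $\CP$.

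Next I would sum over $\CP$: the non‑forest indices are unconstrained and contribute the factor $m^r$, the forest indices a factor $O_m(1)$, and there are $O_m(1)$ forest shapes for each $t$. The product $\prod_{s\notin A(\CF)}(1+t_s)$ equals $\prod_{l=1}^t(1+l)^{\sigma_l-\sigma_{l+1}-1}$ (with $\sigma_{t+1}:=0$), which telescopes to $\tfrac{1}{(t+1)!}\prod_{l=1}^t(1+1/l)^{\sigma_l}$; the crucial numerical fact is $\prod_{l=1}^{m-1}(1+1/l)=m$, which together with the bounds $\sigma_l\le\beta_{j_{\sigma_l}}$ and a geometric, top‑of‑block‑dominated summation of $\prod_l(1+1/l)^{\sigma_l}$ over the labels lying in a block should distribute a factor $m^{\beta_j/(m-1)}$ to each tree edge in block $j$; the cumulative count $\beta_j$ (not $b_j$) appears because the component at a non‑forest $s$ is inflated by every forest edge with label larger than $s$. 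Since a spanning forest of $\{1,\dots,m\}$ has at most $m-1$ edges, the sum over forests should reassemble into an $(m-1)$‑st power of $1+\sum_j m^{\beta_j/(m-1)}e^{-j}$, with the exponents $p-1$ produced when the outer $(p-1)$‑st power is distributed across these factors.

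The main obstacle is precisely the coupling of the exponent $p-1$ with the outer sum over $\CP$. The naive route — invoking subadditivity to push $p-1$ inside the $\CF$‑ and $\CQ$‑sums and then bounding $\sum_{c}|c|^{p}\le m(1+t_s)^{p-1}$ over the components $c$ of $\CF_{>s}$ — is too crude: for $t<m-1$ the telescoped factor $\prod_{l=1}^t(1+1/l)^{\sigma_l}$ can be of size $(t+1)^{\sigma_1}$, which already exceeds $m^{t\sigma_1/(m-1)}$, so a forest with a few heavy, tightly clustered edges cannot be matched termwise against $\bigl(\sum_j(\cdots)\bigr)^{t}$ but must instead be absorbed into the full $(m-1)$‑st power $\bigl(1+\sum_j(\cdots)\bigr)^{m-1}$, with the factor $m^r$ from the non‑forest indices paying for the bulk of the partitions $\CP$; likewise one cannot bound $\sum_\CQ e^{-W}$ uniformly in $\CP$ and then sum. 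Arranging the $\CP$‑summation and the $(p-1)$‑st power so that the exponent on $m$ comes out to be exactly $(p-1)/(m-1)$ and the exponent on $e^{-1}$ exactly $p-1$, rather than anything weaker, is the delicate part of the proof.
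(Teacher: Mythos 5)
You identify the right framework (the heaviest/Kruskal spanning forest, Lemma~\ref{forestbound}, and the observation that for a non-forest label $s$ the $\CQ$-class of $s$ must lie in the component of its $\CP$-class in the truncated forest), but the proof is incomplete at exactly the point your final paragraph concedes, and that point is the substance of the lemma. Your order of operations --- bound, for fixed $\CP$, the number of admissible $\CQ$ by the worst-case component size, giving $\prod_{s\notin A(\CF)}(1+t_s)$, then raise to the power $p-1$, then sum over $\CP$ trivially to get $m^r$ --- decouples $\CP$ from $\CQ$ and yields the per-index factor $m(1+t_s)^{p-1}$. As you yourself observe, $m(1+t_s)^{p-1}\le m\,(m^{(p-1)/(m-1)})^{t_s}$ already fails at $t_s=1$ for every $m\ge 3$, and your telescoped factor exceeds the target whenever the forest has fewer than $m-1$ edges; this route can deliver the lemma only with a base like $2^{p-1}$ in place of $m^{(p-1)/(m-1)}$, which is genuinely weaker (in Section~5.3 it would cap $r$ at about $(\log k)/\log 2$ instead of $\frac{m-1}{\log m}\log k$, hence give the wrong exponent $\delta_m$ for all $m\ge3$). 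Saying the bad terms ``must be absorbed into the full $(m-1)$-st power'' with $m^r$ ``paying for the bulk'' is not an argument: after the decoupling, the loss $\bigl((1+t_s)/m^{t_s/(m-1)}\bigr)^{p-1}$ is incurred once per non-forest index and compounds over $r$ indices, so no rearrangement recovers the stated constant.

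The missing idea (and the paper's actual argument, which is not the ``naive route'' you dismiss) is to keep the $\CP$--$\CQ$ coupling: push the exponent $p-1$ inside by subadditivity over the forest sum first, retaining the true component size $|I_{st_s}|^{p-1}$ rather than its maximum, and only then sum over $\CP$. Since the choice of which part of $\CP$ contains $s$ determines the relevant component, index $s$ contributes $\sum_{I}|I|\cdot|I|^{p-1}=\sum_I|I|^p$ over the components $I$ of $\CF_s$; the gain is that most choices put $s$ in a singleton component with weight $1$, which your uniform bound $(1+t_s)^{p-1}$ gives up. By convexity $\sum_I|I|^p\le(m-N_s+1)^p+N_s-1$, and the inequality $(\ell+1)^p+m-\ell-1\le m\,(m^{(p-1)/(m-1)})^{\ell}$ of \cite[Lemma~3.7]{kouk1} --- valid only for $p$ sufficiently close to $1$ in terms of $m$, a hypothesis your sketch never uses but which is essential --- turns the per-index factor into exactly $m\,(m^{(p-1)/(m-1)})^{\ell}$ with $\ell$ the number of forest edges of larger label; the exponents then telescope to the sum of the forest labels, producing $(m^{(p-1)/(m-1)})^{s_1+\cdots+s_f}$ and the stated bound. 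Without this step, or an equivalent replacement for it, the proposal does not prove the lemma.
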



\begin{proof} We will consider several graphs throughout the proof, but we fix for all time the vertex set as $\{1,\dots,m\}$. 

Before we begin, we make some observations. Fix, for the moment, two partitions $\CP$ and $\CQ$, and consider the associated weighted graph $\CG$. As noted earlier, a $\CF$ which is
a heaviest subforest is a spanning subforest of $\CG$.  For any $s$, denote by $\CF_s$ the subforest of $\CF$ consisting of all edges $e\in\CF$ with
$s_e \ge s$.  We now show that there is a heaviest subforest $\CF$ with the following property:
whenever $s\in P_i\cap Q_j$, then $i$ and $j$ lie in the same component of $\CF_s$. 
To see this, we separate three cases.
\begin{enumerate}[label={\upshape(\alph*)}]
  \item If $i=j$, then the claim is trivially true. In particular, we note in this case that $i$ is an isolated vertex of $\G$ (and hence of $\CF_s$).
  \item If $i\neq j$ and $s\in A(\CF)$, then in fact $\{i,j\}$ is an edge of $\CF_s$.
  \item Suppose that $i\neq j$ and $s\notin A(\CF)$. As noted before, $i$ and $j$ must lie in the same component of $\G$, hence in the same component of $\CF$.  There is a unique path from $i$ to $j$ 
  within $\CF$.  If $i$ and $j$ do not lie in the same component of $\CF_s$, then this path takes the form
\[
i \rightarrow \cdots \rightarrow i' \overset{s'}\rightarrow j' \rightarrow\cdots \rightarrow j
\]
with at least one label $s'<s$ and weight $j_{s'}$. But then we can create another subforest $\CF'$ by removing the edge $\{i',j'\}$ from $\CF$ (breaking the tree) and adding the edge $\{i,j\}$ (reconnecting the tree), whose label is $\ge \max(P_i\cap Q_j)\ge s$ with substitute weight at least $j_{s} \ge j_{s'}$.
\end{enumerate}

We are now ready to prove the lemma. Given an ordered partition $\CP$, a forest $\CF$ with $N$ components, and a set of labels $A=A(\CF)$ on the edges of $\CF$, write $M(\CP,\CF,A)$ for the number of $\CQ$ for which the associated graph $\G$ has a heaviest subforest $\CF$.    
The above discussion implies that for each $s\in\{1,\ldots,r\}$, 
the number of possibile $j\in \{1,\ldots,m\}$ so that $s\in Q_j$ is at most $|I_{st_s}|$, where 
 $I_{s1},\dots,I_{sN_s}$ denote the components of $\CF_s$ and $t_s$ is defined by $s\in P_i$ and $i\in I_{st_s}$. 
It follows that
\[
M(\CP,\CF,A) 
	\le \prod_{s=1}^r |I_{st_s}|.
\]
Together with Lemma \ref{forestbound} and the inequality $(x+y)^{p-1}\le x^{p-1}+y^{p-1}$, which is true for $p\in[1,2]$ and $x,y\ge0$, we find that
\als{
  \sum_\CP \(\sum_\CQ S(\CP,\CQ)\)^{p-1}
  	&\ll_m \left(\prod_{j=1}^J \lambda_j^{b_j(p-1)}\right) \sum_{\CP} \( \sum_{\CF,A} e^{-W(\CF)} 
             \prod_{s=1}^r  |I_{st_s}| \)^{p-1} \\
&\le \left(\prod_{j=1}^J \lambda_j^{b_j(p-1)}\right) 
  	\sum_{\CF,A} e^{-(p-1)W(\CF)}  
	 \ssum{1\le t_s\le N_s\\ 1\le s\le r}
  	 \ssum{(P_1,\dots,P_m) \\ s\in P_i\, \Rightarrow\, i\in I_{st_s}} 
	 	\prod_{s=1}^r  |I_{st_s}|^{p-1} \\
	 &= \left(\prod_{j=1}^J \lambda_j^{b_j(p-1)}\right) 
  	\sum_{\CF,A} e^{-(p-1)W(\CF)}  \ssum{1\le t_s\le N_s\\ 1\le s\le r}
		|I_{1t_1}|^p\cdots |I_{rt_r}|^p \\
	&= \left(\prod_{j=1}^J \lambda_j^{b_j(p-1)}\right) 
  	\sum_{\CF,A} e^{-(p-1)W(\CF)}  
		\prod_{s=1}^r (|I_{s1}|^p+\cdots+|I_{sN_s}|^p) .
}
Note that
\[
  \max\{x_1^p + \cdots + x_n^p : x_1+\cdots+x_n=m, x_1,\dots,x_n\geq 1\} = (m-n+1)^p + n-1
\]
for $m\geq n$: this follows from convexity of the function $(x_1,\dots,x_n)\mapsto x_1^p + \cdots + x_n^p$, since the maximum of a convex function in a simplex occurs at one of its vertices. Therefore
\als{
 \frac{ \sum_\CP \(\sum_\CQ S(\CP,\CQ)\)^{p-1}}{\prod_{j=1}^J \lambda_j^{b_j(p-1)}}
 	\ll_m \sum_{\CF,A} e^{-(p-1)W(\CF)  }  
		\prod_{s=1}^r ((m-N_s+1)^p+N_s-1)  .
}

Let $f$ denote the number of edges in $\CF$, and write $s_1<\cdots< s_f$ for the edge labels of $F$, which we know are distinct. We also write $s_0=0$ and $s_{f+1}=r$ for convenience. Recall that $N$ is the number of components of $\CF$, so that $N_1=N$. Since a tree of $n$ vertices contains exactly $n-1$ edges, we must have that $f=m-N$.

Note that $N_s=N_{s_i}$ is constant when $s\in(s_{i-1},s_i]$, as well as that $N_{s_i}=\min\{m,N_{s_{i-1}}+1\}$, since the removal of one edge from $F_{s_{i-1}}$ cuts one component into two pieces, creating exactly one additional component in $F_{s_i}$. Consequently, $N_{s_i} = N+i-1$ for $i\le f$ and $N_s=m$ for $s>s_f$, so that
\[
 (m-N_s+1)^p+N_s-1= \begin{cases}
                     (f-i+2)^p+m-f+i-2 & \text{ if } s_{i-1}<s\le s_i; i\le f \\ m & \text{ if } s_f<s\le s_{f+1}=r.
                    \end{cases}
\]
There are $O_m(1)$ forests $\CF$, and $O_m(1)$ orderings of the edges within each forest.
Therefore,
\als{ 
\frac{\sum_\CP \(\sum_\CQ S(\CP,\CQ)\)^{p-1}}{\prod_{j=1}^J \lambda_j^{b_j(p-1)}}
&\ll_m m^r +  \sum_{f=1}^{m-1} \sum_{1\le s_1<\cdots<s_f\le r}
		e^{-(p-1)\sum_{i=1}^f j_{s_i} }   \\
	&\qquad\times \prod_{i=1}^f \( (f-i+2)^p +m-f+i-2 \)^{s_i-s_{i-1}} m^{s_{f+1}-s_f} ,
}
with the summand $m^r$ corresponding to $f=0$, that is to say the forest with no edges. 
Lemma~3.7 in \cite{kouk1} implies that
\[
(\ell+1)^p+m-\ell-1  \le  m \(m^\frac{p-1}{m-1}\)^\ell  \quad(0\le \ell\le m-1) ,
\]
provided that $p$ is sufficiently close to $1$ in terms of $m$, so that
\als{
\frac{\sum_\CP \(\sum_\CQ S(\CP,\CQ)\)^{p-1}}{m^r \prod_{j=1}^J \lambda_j^{b_j(p-1)}}
	&\ll_m 1 + \sum_{f=1}^{m-1} 
		\sum_{1\le s_1<\cdots<s_f\le r} e^{-(p-1)(j_{s_1}+\cdots+j_{s_f})} \(m^\frac{p-1}{m-1}\)^{s_1+\cdots+s_f} \\
	&\le 1+  \sum_{f=1}^{m-1} \left( \sum_{s=1}^r e^{-(p-1)j_s} 
		\(m^\frac{p-1}{m-1}\)^s  \right)^f ,
}
by unordering the summands. Clearly, if the expression which we raise to the $f$-th power is $<1$, then the term 1 dominates; otherwise, the term with $f=m-1$ dominates. In any case,
\[
\frac{\sum_\CP \(\sum_\CQ S(\CP,\CQ)\)^{p-1}}{m^r \prod_{j=1}^J \lambda_j^{b_j(p-1)}}
  	\ll_m 1+ \left( \sum_{s=1}^r e^{-(p-1)j_s} 
		\(m^\frac{p-1}{m-1}\)^s  \right)^{m-1} .
\]
In order to complete the proof, note that
\als{
\sum_{s=1}^r e^{-(p-1)j_s} 
		\(m^\frac{p-1}{m-1}\)^s  
	&= \sum_{j=1}^J e^{-(p-1)j} \sum_{b_1+\cdots+b_{j-1}<s\le b_1+\cdots+b_j}\(m^\frac{p-1}{m-1}\)^s  \\
	&\ll \sum_{j=1}^J e^{-(p-1)j} \(m^\frac{p-1}{m-1}\)^{b_1+\cdots+b_j} .
}
The claimed estimate then follows.
\end{proof}


By combining~\eqref{Ldc} and~\eqref{lower-start} with Lemmas~\ref{doubleholder} and~\eqref{sumPsumQbound}, we have proved the following proposition.


\begin{proposition}\label{generallower}
If $p\in(1,2]$ is sufficiently close to $1$ in terms of $m$, then
\[
  \E|\sL_m(\X_k)| \gg_m \frac1k \sum_{b_1,\dots,b_J\geq 0} 
  	\frac{m^r \prod_{j=1}^J \lambda_j^{b_j}/b_j!}{\(1+\sum_{j=1}^J \(m^\frac{p-1}{m-1}\)^{b_{1}+\cdots+b_j} e^{-(p-1)j}\)^\frac{m-1}{p-1}}.
\]
\end{proposition}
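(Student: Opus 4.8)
The plan is to assemble the proposition directly from the ingredients already in place; no new idea is required beyond careful bookkeeping with the exponents. Starting from the local-to-global identity~\eqref{Ldc}, namely $\E|\sL_m(\X_k)| \order \frac1k \sum_{c_1,\dots,c_k\ge0} |\sL_m(\c)|/\prod_{i=1}^k c_i! i^{c_i}$, I would first discard all cycle types except those satisfying the block condition~\eqref{cj-condition} for a fixed vector $\b=(b_1,\dots,b_J)\in\Z_{\ge0}^J$, where $J=\fl{\log k}$. Since every summand is nonnegative this costs only an inequality in the favourable direction, and at the end the per-$\b$ bounds will be summed over all $\b$: the conditions~\eqref{cj-condition} for distinct $\b$ select disjoint families of admissible $\c$ (all supported on $[1,e^J)$), so that sum is again a lower bound for the full sum in~\eqref{Ldc}. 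For each fixed $\b$, equation~\eqref{lower-start} rewrites the partial sum as $\frac1{\prod_j b_j!}\sum_{\a\in\DD(\b)} |\sL_m^*(\a)|/(a_1\cdots a_r)$ with $r=b_1+\cdots+b_J$.

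Next I would apply Lemma~\ref{doubleholder} with a parameter $p\in(1,2]$ to be chosen, bounding this inner sum below by
\[
\frac{m^{pr/(p-1)}\prod_{j=1}^J\lambda_j^{2b_j}}{\bigl(\sum_\CP\bigl(\sum_\CQ S(\CP,\CQ)\bigr)^{p-1}\bigr)^{1/(p-1)}},
\]
and then bound the denominator above by Lemma~\ref{sumPsumQbound}, which applies once $p$ is taken close enough to $1$ in terms of $m$ and gives $\sum_\CP(\sum_\CQ S(\CP,\CQ))^{p-1}\ll_m m^r\bigl(\prod_j\lambda_j^{b_j(p-1)}\bigr)\bigl(1+\sum_{j=1}^J(m^{(p-1)/(m-1)})^{b_1+\cdots+b_j}e^{-(p-1)j}\bigr)^{m-1}$. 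Raising the latter to the power $1/(p-1)$ and substituting, the powers of $m$ collapse as $m^{pr/(p-1)-r/(p-1)}=m^r$, the $\lambda_j$-powers reduce from $\prod_j\lambda_j^{2b_j}$ to $\prod_j\lambda_j^{b_j}$, and the bracket survives raised to $(m-1)/(p-1)$ in the denominator. Reinstating the factor $\frac1{\prod_j b_j!}$ from~\eqref{lower-start} and the overall $1/k$ from~\eqref{Ldc}, the contribution of a single $\b$ is precisely the summand in the proposition.

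The final step is purely formal: sum over all $\b\in\Z_{\ge0}^J$ to obtain the displayed inequality. I do not anticipate a genuine obstacle here; the only points requiring attention are the exponent arithmetic in the combination step and the fact that $p$ must be fixed once and for all sufficiently close to $1$ (depending on $m$) so that Lemma~\ref{sumPsumQbound} is in force, which is exactly why that hypothesis appears in the statement. That the restriction to cycle lengths below $e^J$ throws away nothing of real substance is not needed at this stage; it will be seen a posteriori, once this lower bound is shown to match the upper bound of Section~\ref{sec:upper} after an appropriate choice of the weights $\b$.
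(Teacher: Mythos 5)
Your proposal is correct and is exactly the paper's own argument: the paper proves Proposition~\ref{generallower} precisely by combining \eqref{Ldc} and \eqref{lower-start} with Lemmas~\ref{doubleholder} and~\ref{sumPsumQbound}, and your exponent bookkeeping ($m^{pr/(p-1)-r/(p-1)}=m^r$, $\lambda_j^{2b_j-b_j}=\lambda_j^{b_j}$) together with the observation that distinct $\b$ select disjoint classes of $\c$ is all that is needed. No gap.
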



\emph{Remark.} The analysis given in this subsection differs technically from the corresponding analysis in~\cite{kouk1}. First of all, the combinatorial language of trees and forests used to describe the interdependencies in the relevant linear system is new, but even when both arguments are cast in this language, there is a difference, related to how we analyze the partitions giving rise to a particular heaviest subforest. The difference is parallel to that between two of the best known algorithms for finding a minimal spanning tree, namely Prim's algorithm, which builds a tree by repeatedly adding the least expensive edge growing out of the current tree, and Kruskal's algorithm, which builds a forest by repeatedly adding the least expensive edge which does not create a cycle. In \cite{kouk1}, the argument is more closely related to Prim's algorithm, while the argument here is more closely related to Kruskal's algorithm.


\subsection{Input from order statistics}

Now fix $r = \frac{m-1}{\log m} J + O(1)$, and let $\B = \B_{C,C'}$ be the set of all $\b = (b_1,\dots,b_J)$ such that
\begin{enumerate}[label={\upshape(\alph*)}]
  \item $b_1+\cdots+b_J=r$;
  \item $b_j = 0$ for every $j \leq C$;
  \item $b_1+\cdots+b_j  \leq Cj$ for every $j\le J$;
  \item we have the bound
  	\[
	\sum_{j=1}^J \(m^\frac{p-1}{m-1}\)^{b_1+\cdots+b_j} e^{-(p-1)j} \leq C'.
	\]
\end{enumerate}
Here, $C$ and $C'$ are two integers which we will choose to be sufficiently large depending only on $m$. In this case, Proposition~\ref{generallower} implies
\als{
  \E|\sL_m(\X_k)|
  &\gg_m \frac{m^r}{k} \sum_{\b\in\B} \frac{\prod_{j>C} \(1-O(e^{-j})\)^{Cj}}{b_{C+1}!\cdots b_J!}\\
  &\gg_m \frac{m^r}{k} \sum_{\b\in\B} \frac1{b_{C+1}!\cdots b_J!},
}
where the second inequality holds just because the product is convergent and we can choose $C$ sufficiently large.

Let $R(\b)$ be the set of all $\bxi\in[0,1]^r$ such that $0\leq \xi_1\leq \cdots \leq \xi_r< 1$ and such that, for each $j\in\{1,\dots,J-C\}$, exactly $b_{j+C}$ of the variables $\xi_s$ are such that
\[
\frac{j-1}{J-C}\le  \xi_s< \frac{j}{J-C} .
\]
Then
\[
  \sum_{\b\in\B} \frac1{b_{C+1}!\cdots b_J!}  = \sum_{\b\in\B} (J-C)^r \Vol(R(\b)) = (J-C)^r \Vol(\cup_{\b\in\B} R(\b)) \geq (J-C)^r \Vol(Y),
\]
where $Y$ is the set of all $\bxi\in[0,1]^r$ such that $0\leq \xi_1\leq\cdots\leq\xi_r< 1$, $\xi_s\ge(s-C^2)/(CJ-C^2)$ for each $s$, and
\[
 e^{-1}+ \sum_{s=1}^r \(m^\frac{p-1}{m-1}\)^s e^{-(p-1)(J-C)\xi_s} \leq 
  C'(1-e^{1-p}) .
\]
If $C$ is large enough in terms of $m$, and then $C'$ is sufficiently large in terms of $C$, $p$ and $m$, then \cite[Lemma~3.10]{kouk1} implies that
\[
  \Vol(Y) \gg \frac1{r\cdot r!},
\]
It follows from this and a short calculation using Stirling's formula that
\als{
  \E|\sL_m(\X_k)|
	  \gg_m \frac{m^r}{k} \frac{J^r}{r\cdot r!}
  	\order_m k^{m-1-\delta_m} (\log k)^{-3/2}.
}
The lower bound in Theorem~\ref{main} is now a direct corollary of this estimate and of Proposition~\ref{local-global}.


\section{Imprimitive transitive subgroups}\label{sec:imprimitive}

In this section we use Theorem~\ref{main} to prove Theorem~\ref{imprimitive} by fleshing out the argument outlined in Section~\ref{sec:outline}. We will start with bounded $\nu$ and gradually treat larger and larger $\nu$. To bound $I(n)$ we will then use the trivial bound
\begin{equation}\label{trivInInnubound}
  I(n) \leq \ssum{\nu\mid n \\  1<\nu<n} I(n,\nu).
\end{equation}

\subsection{Small $\nu$}

We did most of the work for the case in which $\nu$ is bounded already in Section~\ref{sec:outline}. We state the conclusion here.

\begin{proposition}\label{smallnu}
Let $\nu$ be a bounded divisor of $n$. Then
\[
I(n,\nu) \order_\nu
  \begin{cases}
    n^{-\delta_\nu}(\log n)^{-3/2} & \text{if}~1<\nu\leq 4,\\
    n^{-1+1/(\nu-1)} & \text{if}~\nu\geq 5.
  \end{cases}
\]
\end{proposition}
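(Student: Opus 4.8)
Since $\nu$ is bounded, the number of partitions of $\nu$ is $O_\nu(1)$, so the two-sided bound \eqref{Innubound} collapses to
\[
  I(n,\nu) \order_\nu \max_{(d_i)} i(n,(d_in/\nu)_i,(d_i)_i),
\]
the maximum taken over partitions $(d_i)$ of $\nu$; that is, $I(n,\nu)$ is comparable (up to a $\nu$-dependent constant, which is harmless here) to the value of $i$ at a dominant partition. The plan is therefore: first reduce to the dominant partition via \eqref{Innubound}, then invoke Proposition~\ref{dominantpart} to identify it, and finally read off the size of $i$ at that partition from Theorem~\ref{main} or Lemma~\ref{partition-d-1}.

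\textbf{The case $1<\nu\le4$.} By Proposition~\ref{dominantpart} the dominant partition is $(1,\dots,1)$, so
\[
  I(n,\nu)\order_\nu i(n,(n/\nu,\dots,n/\nu)).
\]
Here each part equals $n/\nu\order_\nu n$, so the final assertion of Theorem~\ref{main} gives $i(n,(n/\nu,\dots,n/\nu))\order_\nu n^{-\delta_\nu}(\log n)^{-3/2}$, which is the claimed estimate.

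\textbf{The case $\nu\ge5$.} Again by Proposition~\ref{dominantpart} the dominant partition is $(\nu-1,1)$, so
\[
  I(n,\nu)\order_\nu i\bigl(n,((\nu-1)n/\nu,\,n/\nu),(\nu-1,1)\bigr).
\]
Since $\nu-1\ge4\ge3$ and $(\nu-1)n/\nu$ is divisible by $\nu-1$, Lemma~\ref{partition-d-1} applies with $d=\nu-1$ and $k_1=(\nu-1)n/\nu$, yielding $i(n,((\nu-1)n/\nu,n/\nu),(\nu-1,1))\order ((\nu-1)n/\nu)^{-1+1/(\nu-1)}\order_\nu n^{-1+1/(\nu-1)}$, as required.

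\textbf{Main obstacle.} There is essentially no new difficulty in this proposition: all the real work has already been done in establishing Theorem~\ref{main} (for the equal-parts case) and in Proposition~\ref{dominantpart} (which in turn rests on Lemma~\ref{first-lemma}, Lemma~\ref{partition-d-1}, Theorem~\ref{main}, and the monotonicity of $(\delta_m)$). The only point requiring a moment's care is that the passage from \eqref{Innubound} to a single dominant term costs a constant depending on $\nu$ through the number of partitions of $\nu$; this is why the conclusion is stated with $\order_\nu$ rather than with absolute constants, and it is exactly what one wants when $\nu$ is bounded. For unbounded $\nu$ — which is treated in later subsections — this reduction fails and a different argument is needed.
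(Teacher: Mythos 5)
Your proposal is correct and follows exactly the paper's own route: reduce via \eqref{Innubound} to the dominant partition (costing only an $O_\nu(1)$ factor since $\nu$ is bounded), identify it with Proposition~\ref{dominantpart}, and then quote Theorem~\ref{main} for $\d=(1,\dots,1)$ when $1<\nu\le4$ and Lemma~\ref{partition-d-1} for $\d=(\nu-1,1)$ when $\nu\ge5$. The divisibility check for Lemma~\ref{partition-d-1} is a nice touch but there is no substantive difference from the paper's proof.
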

\begin{proof}
This follows immediately from Lemma~\ref{partition-d-1}, Theorem~\ref{main}, and Proposition~\ref{dominantpart}. Specifically, by Proposition~\ref{dominantpart} we know that
\[
  I(n,\nu) \order_\nu i(n,(d_in/\nu)_i,(d_i)_i),
\]
where
\[
  \d =
  \begin{cases}
    (1,\dots,1) & \text{if}~\nu\leq 4,\\
    (\nu-1,1) & \text{if}~\nu\geq 5.
  \end{cases}
\]
Theorem~\ref{main} provides an estimate for $i(n,(d_in/\nu)_i,(d_i)_i)$ for $\d=(1,\dots,1)$, while Lemma~\ref{partition-d-1} provides an estimate for $i(n,(d_in/\nu)_i,(d_i)_i)$ for $\d = (\nu-1,1)$.
\end{proof}

\subsection{Intermediate $\nu$}

For unbounded but not too large $\nu$ our goal is still to prove $I(n,\nu)\order n^{-1+1/(\nu-1)}$. 
As long as $\nu$ is less than $\log n$, this is not the same as $n^{-1}$, and as long as $\nu$ is less than $(\log n)^{1/2}$, this is not the same as $n^{-1+1/\nu}$, so we must continue to give special status to the partition $(\nu-1,1)$. 

\begin{proposition}\label{intermediatenu}
Let $\nu$ be a divisor of $n$ such that $1000\leq \nu\leq n/\log^2 n$. Then 
\[
  I(n,\nu) \order n^{-1+1/(\nu-1)}.
\]
\end{proposition}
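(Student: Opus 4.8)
The plan is to treat the lower bound (which is immediate) and the upper bound (a somewhat technical sum over partitions of $\nu$) separately; throughout I would write $x=n/\nu$, noting that the hypothesis $\nu\le n/\log^2 n$ forces $x\ge\log^2 n\to\infty$, which is the one feature that always works in our favour.

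\textbf{Lower bound.} Since $\nu\ge1000$ we have $\nu-1\ge3$, so Lemma~\ref{partition-d-1} applies to the partition $(\nu-1,1)$ of $\nu$ and gives $i\bigl(n,((\nu-1)x,x),(\nu-1,1)\bigr)\order((\nu-1)x)^{-1+1/(\nu-1)}\order n^{-1+1/(\nu-1)}$, using $999/1000\le(\nu-1)/\nu<1$ and $\nu x=n$. The left inequality of \eqref{Innubound} then gives $I(n,\nu)\gg n^{-1+1/(\nu-1)}$.

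\textbf{Upper bound.} I would start from the right inequality of \eqref{Innubound}, $I(n,\nu)\le\sum_{\d\vdash\nu}i(n,(d_ix)_i,(d_i)_i)$, handle the partitions $(\nu)$ and $(\nu-1,1)$ directly (via Lemma~\ref{first-lemma}(a) and Lemma~\ref{partition-d-1}, with contributions $\order n^{-1+1/\nu}\le n^{-1+1/(\nu-1)}$ and $\order n^{-1+1/(\nu-1)}$), and show the remaining partitions --- those with all parts $\le\nu-2$ --- contribute $\ll n^{-1+1/(\nu-1)}$ in total. For such a $\d$, write $\nu'$ for the number of parts equal to $1$; the parts $\ge2$ form a partition of $s:=\nu-\nu'$. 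Peeling those parts off one at a time by Lemma~\ref{first-lemma}(b) and bounding each via Lemma~\ref{first-lemma}(a), and then peeling the $\nu'$ unit blocks off in pairs and applying the $m=2$ case of Theorem~\ref{main} (equivalently~\cite{EFG1}) $\lfloor\nu'/2\rfloor$ times, one arrives at
\[
i\bigl(n,(d_ix)_i,(d_i)_i\bigr)\ \le\ \Bigl(\prod_{d_i\ge2}(d_ix)^{-1+1/d_i}\Bigr)\bigl(C_0\,x^{-\delta_2}(\log x)^{-3/2}\bigr)^{\lfloor\nu'/2\rfloor}
\]
with $C_0$ absolute, the last factor being $1$ when $\nu'\le1$ --- a case in which $\d$ automatically has at least two parts $\ge2$, so the product already carries exponent of $x$ at most $-1$. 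It then remains to sum this over $\nu'$ and over the partition of $s=\nu-\nu'$ into parts $\ge2$. Here I would use that $d\mapsto(dx)^{-1+1/d}$ is decreasing (for $d\ge2$, $x\ge2$), so that the inner sum is controlled by the single largest non-trivial part, while partitions with many parts --- trivial or not --- contribute terms small enough to absorb the number $p(\nu)=e^{O(\sqrt\nu)}$ of partitions of $\nu$; guided by Proposition~\ref{dominantpart}, the outcome is that the largest contribution comes from $\d=(\nu-2,1,1)$ (or $(\nu-2,2)$), which is $n^{-1+1/(\nu-1)}$ times an extra $o(1)$ factor of size roughly $x^{-\delta_2}$ or $x^{-1/2}$.

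\textbf{Main obstacle.} The real work --- and the place I expect to spend most of the effort --- is making the final summation uniform in $\nu$ over the whole range $1000\le\nu\le n/\log^2 n$, across which $x=n/\nu$ slides from $\asymp n$ down to $\log^2 n$. The difficulty is a genuine trade-off: partitions with many trivial blocks gain only the modest factor $x^{-\delta_2}$ ($\delta_2=0.086\dots$) per \emph{pair} of blocks, but there can be $\Theta(\nu)$ of them; partitions with a few large non-trivial blocks gain more per block but are far fewer; and all this has to beat both the $\nu$-dependent prefactors (powers of $\nu/d_i$ and factors like $n^{1/d}$) and the mismatch between the exponents $-1+1/(\nu-1)$ and $-1+1/d$ with $d\le\nu-2$. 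Concretely, I would verify estimates like $i(n,(d,1,\dots,1))\ll n^{-1+1/(\nu-1)}$ for $2\le d\le\nu-2$ by splitting the range of $\nu$, say at $\nu=\log n$: for small $\nu$ the saving is a genuine power of $n$, which swamps the $n^{1/((\nu-1)(\nu-2))}$-type losses coming from the exponent mismatch; for large $\nu$ one has $n^{-1+1/(\nu-1)}\asymp n^{-1}$, the prefactors are mild, and the bound $x\ge\log^2 n$ suffices. In every case the saving factors are $o(1)$ because $x\to\infty$; the task is to check that they are $o(1)$ by a large enough margin.
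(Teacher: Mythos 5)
Your lower bound and your treatment of the partitions $(\nu)$ and $(\nu-1,1)$ coincide with the paper's, and your per-partition bound (peel parts $\ge 2$ via Lemma~\ref{first-lemma}(a,b), peel unit blocks in pairs via the $m=2$ case of Theorem~\ref{main}) is essentially the paper's inequality \eqref{idi}. The genuine gap is in the step you defer: the summation over all remaining partitions. Your proposed mechanism --- bound the product through its largest non-trivial part, bound every other factor by its maximal value, and "absorb the number $p(\nu)=e^{O(\sqrt\nu)}$ of partitions" --- fails as soon as $\nu$ is moderately large. The maximal contribution of an excluded partition is about $n^{-1+1/(\nu-2)}x^{-\delta_2}$ (with $x=n/\nu$), so multiplying a maximal term by $e^{c\sqrt\nu}$ already overshoots $n^{-1+1/(\nu-1)}$ once $\sqrt\nu\gg\log x$, and for $\nu$ near $n/\log^2 n$ the discrepancy is astronomical: with only the crude bound $(d_ix)^{-1+1/d_i}\le(2x)^{-1/2}$ per part, the partitions of $\nu$ with $t$ parts $\ge2$ number roughly $\nu^{t}/t!$, so the count-times-max sum behaves like $e^{\nu/\sqrt{2x}}$, which blows up whenever $\nu\gg\sqrt{n}$. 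In short, the entropy of the part sizes cannot be paid for by bounding each factor by its maximum; the decay of $(dx)^{-1+1/d}$ in $d$ must be used inside the sum. (A small symptom of the same issue: your remark that when $\nu'\le1$ "the product already carries exponent of $x$ at most $-1$" is not sufficient, since $x^{-1}=\nu/n\gg n^{-1}$ for large $\nu$; what saves that case is that one part is of size $\asymp\nu$, so its single factor is already $\asymp n^{-1}$.)

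The paper's proof supplies exactly the missing device. It isolates the largest part $d_m\ge\nu/m$, whose factor gives the main term $(n/m)^{-1+m/\nu}$, and then sums each remaining part $\ge2$ independently over its size using the key estimate \eqref{sumd}, $\sum_{2\le d\le\nu}(dn/\nu)^{-1+1/d}\ll(n/\nu)^{-1/2}$, valid precisely because $n/\nu\ge\log^2 n\ge\log^2\nu$ --- this is where the hypothesis $\nu\le n/\log^2 n$ enters, and it replaces "number of choices times maximal weight" (of size $\nu x^{-1/2}$) by $O(x^{-1/2})$ per part. Each pair of unit parts contributes $(cn/\nu)^{-\delta_2}$ as in your sketch. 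The resulting bound $R_m\ll(n/m)^{-1+m/\nu}e^{O(m)}(n/\nu)^{-(m-2)\delta_2/2}$ is then summed over the number of parts $m$ by splitting into the ranges $3\le m\le 40$, $40\le m\le\nu/\log n$, and $m>\max(40,\nu/\log n)$, where the geometric decay in $m$ (available because $x\to\infty$ beats the $e^{O(m)}$ constants) and the control of $n^{m/\nu}$ give $O(n^{-1})$ uniformly in $1000\le\nu\le n/\log^2 n$. Without \eqref{sumd} or an equivalent per-part summation with the true weights, your plan cannot be completed in the upper range of $\nu$, so you should restructure the final summation along these lines rather than via $p(\nu)$.
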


\begin{proof}
The lower bound is immediate from Lemma~\ref{partition-d-1}:
\[
  I(n,\nu) \geq i(n,((\nu-1)n/\nu,n/\nu),(\nu-1,1)) \order n^{-1+1/(\nu-1)}.
\]
Thus it suffices to prove the upper bound.

Consider a partition $(d_i)$ of $\nu$ into $m$ parts, where $d_1\le d_2 \le \cdots \le d_m$. If $m=1$, we have $i(n,(n),(\nu)) \order n^{-1+1/\nu}$ by Lemma \ref{first-lemma}(a). If $m=2$, $d_1=1$ and $d_2=\nu-1$ we get $i(n,((\nu-1)n/\nu,n/\nu),(\nu-1,1)) \order n^{-1+1/(\nu-1)}$ as above. We will show that the sum of all other terms $i(n,(d_i n/\nu)_i,(d_i)_i)$ is $O(n^{-1})$, which will prove the lemma. We will use Lemma~\ref{first-lemma}(b), together with Lemma~\ref{first-lemma}(c) for the parts $d_i\ge 2$, and the main result of \cite{EFG1} for the parts $d_i=1$, namely $i(2n/\nu,(n/\nu,n/\nu)) \le (cn/\nu)^{-\delta_2}$ for some absolute constant $c\in(0,1]$. Writing $\lambda$ for the number of $i<m$ such that $d_i=1$, we find that
\eq{idi}{
 i(n,(d_i n/\nu)_i,(d_i)_i) 
 	&\le \pfrac{d_m n}{\nu}^{-1+1/d_m} 
 	\left(\prod_{i<m,d_i\ge 2} \pfrac{d_i n}{\nu}^{-1+1/d_i} \right) 
 	\pfrac{cn}{\nu}^{-\fl{\lambda/2} \delta_2} \\
	&\le \pfrac{n}{m}^{-1+m/\nu}
 	\left(\prod_{i<m,d_i\ge 2} \pfrac{d_i n}{\nu}^{-1+1/d_i} \right) 
 	\pfrac{cn}{\nu}^{-\fl{\lambda/2} \delta_2} ,
}
where we used the fact that $d_m\ge \nu/m$.

We also make use of the following estimate:
\begin{equation}\label{sumd}
  \sum_{2\le d\le D} \frac{1}{(xd)^{1-1/d}} \ll \frac{1}{x^{1/2}} \qquad (x \ge \log^2 D).
\end{equation}
This is easily proved by observing that the term $d=2$ is $1/(2x)^{1/2}$, while the terms with $d>2$ contribute at most
\[
 \frac{\max_d d^{1/d}}{x} \( x^{1/3} \sum_{d\le \log x}\frac{1}{d}  + e \sum_{\log x < d\le D} \frac{1}{d} \)
 			 \ll \frac{x^{1/3}\log\log x+\log D}{x}\ll \frac{1}{x^{1/2}}.
\]

Now, we use the above discussion to bound the contribution of the remaining terms $i(n,(d_i n/\nu)_i,(d_i)_i)$. First, we deal with those terms that have $m=2$. Relations \eqref{idi} and \eqref{sumd} imply that
\begin{align*}
\sum_{2\le d_1\le \nu/2} i(n,(n-d_1n/\nu,d_1n/\nu),(d_1,\nu-d_1)) 
	&\le \sum_{2\le d_1\le \nu/2} (n/2)^{-1+2/\nu}
		 \pfrac{d_1 n}{\nu}^{-1+1/d_1} \\
	&\ll n^{-1+2/\nu} \pfrac{n}{\nu}^{-1/2} \ll \frac{1}{n\log n}
\end{align*}
uniformly in  $\nu$.

Now consider the partitions with a fixed number $m\in [3,\nu]$ of parts. Applying again \eqref{idi} and \eqref{sumd}, we get that
\begin{align*}
 \ssum{d_1\le \cdots \le d_m \\ d_1+\cdots+d_m=\nu} i(n,(d_i n/\nu),(d_i)) 
 	&\le \pfrac{n}{m}^{-1+m/\nu} \sum_{\lam=0}^{m-1}
		 \pfrac{cn}{\nu}^{-\fl{\lam/2}\delta_2} 
		 	\Bigg( \sum_{2\le d\le \nu} \pfrac{dn}{\nu}^{-1+1/d} \Bigg)^{m-1-\lam} \\
	 &\le \pfrac{n}{m}^{-1+m/\nu} \sum_{\lam=0}^{m-1}  
	 	\pfrac{cn}{\nu}^{-\fl{\lam/2}\delta_2} 
	 	\(O\(\frac{\nu}{n}\)\)^{\frac{m-1-\lam}{2}}\\
	 &\ll  \pfrac{n}{m}^{-1+m/\nu} e^{O(m)} \pfrac{n}{\nu}^{-\frac{m-2}{2}\delta_2}.
\end{align*}
This estimate suffices, and we simply need to analyze the right hand side, denoted $R_m$, in different ranges of $m$ and $\nu$. When $3\le m \le 40$, $R_m\ll n^{-1}$ uniformly in $\nu \geq 1000$. When $40 \le m\le \nu/\log n$ (in particular, $\nu\ge 40\log n$), $n^{m/\nu}\ll 1$ and
$R_m\ll n^{-1} [ O( (n/\nu)^{-\delta_2/2})]^{m-2}$.  Thus 
$$\sum_{40\le m\le \nu/\log n} R_m\ll n^{-1} \pfrac{n}{\nu}^{-19\delta_2}\ll \frac{1}{n}.$$
Finally, suppose $m> \max(40,\nu/\log n)$.  Then $R_m \ll  [ O( (n/\nu)^{-\delta_2/2})]^{m-2}$.
If $\nu< n^{1/4}$, summing on $m$ gives a total of $\sum_{m} R_m \ll (n/\nu)^{-19\delta_2} \ll n^{-1.2}$. On the other hand, if $\nu>n^{1/4}$, then $m>n^{1/4}/\log n$, and we get $\sum_m R_m \ll n^{-100}$.
\end{proof}

\subsection{Large $\nu$}

Our tools are not well adapted to the range $n/\log^2 n\leq \nu<n$, but fortunately those of Diaconis, Fulman, and Guralnick~\cite{dfg08} are. The argument in this subsection is related to~\cite[Theorems~6.3 and~7.4]{dfg08}, but involves a slightly more careful analysis.

We need a small lemma before continuing.
\begin{lemma}\label{L2lem}
The coefficient of $z^m$ in $\exp\(\sum_{k=1}^\infty \frac{z^k}{k^2}\)$ is bounded by $O(1/m^2)$.
\end{lemma}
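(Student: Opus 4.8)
The plan is to extract the coefficient of $z^m$ in $f(z) := \exp\bigl(\sum_{k\ge1} z^k/k^2\bigr)$ by a standard analytic (Cauchy) estimate, exploiting the fact that the inner series $L(z) = \sum_{k\ge1} z^k/k^2$ is the dilogarithm, which is bounded and analytic on the closed unit disc. First I would note that $L(z)$ converges absolutely for $|z|\le 1$, with $L(1) = \pi^2/6$, and that on the circle $|z|=1$ we have $|L(z)| \le \sum_k 1/k^2 = \pi^2/6$; hence $|f(z)| \le e^{\pi^2/6} = O(1)$ uniformly on $|z|\le1$. By Cauchy's integral formula applied on the unit circle, the coefficient $[z^m] f(z)$ is already $O(1)$, but this only gives an $O(1)$ bound, not $O(1/m^2)$. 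To gain the two extra powers of $m$, the trick is to differentiate: write $[z^m]f = \frac1{2\pi i}\oint f(z) z^{-m-1}\,dz$ and integrate by parts, or equivalently observe that $m(m-1)\,[z^m]f = [z^{m-2}]\,f''(z)$, and then bound $[z^{m-2}]f''$ by $O(1)$ using that $f''$ is again analytic and bounded on the closed unit disc.

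The key point making $f''$ bounded on $|z|\le 1$ is that $L'(z) = \sum_{k\ge1} z^{k-1}/k = -\log(1-z)/z$ has only a logarithmic (hence integrable, and in particular $L^1$ on the circle) singularity at $z=1$, and $L''(z) = \sum_{k\ge 2} (k-1)z^{k-2}/k^2$ is actually bounded on $|z|\le 1$ since $\sum_k (k-1)/k^2 \le \sum_k 1/k < \infty$ — wait, that diverges, so I should be slightly more careful: $L''(z)$ has a mild singularity at $z=1$ of type $\frac1{1-z}$ times something, so it is in $L^1$ of the circle but not bounded. Concretely $f'' = (L'' + (L')^2) f$, and near $z=1$ the worst term is $L''(z) \sim \frac{1}{1-z}$ (coming from $\sum z^k/k \sim -\log(1-z)$ differentiated... actually $\frac{d}{dz}\bigl(-\tfrac{\log(1-z)}{z}\bigr)$), while $(L')^2 \sim (\log(1-z))^2$ is only logarithmically singular. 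So $f''$ has at worst a simple-pole-like (in fact $\tfrac{1}{1-z}$ with bounded coefficient, not $\tfrac1{(1-z)^2}$) singularity at the single point $z=1$ and is otherwise bounded and analytic. This is enough: a function that is analytic on the open disc, continuous up to the boundary except for an isolated $\frac1{1-z}$-type singularity, has $L^1$ boundary values on the circle, so its Taylor coefficients are bounded, i.e. $[z^{m-2}]f'' = O(1)$. Dividing by $m(m-1)$ gives the claimed $O(1/m^2)$.

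Alternatively, and perhaps cleaner for the write-up, I would avoid the boundary-singularity bookkeeping by integrating on a circle of radius $\rho = 1 - 1/m$ and optimizing. On $|z| = \rho$ one has $|L(z)| \le L(\rho) \le \pi^2/6$ and $|L''(z)| \le L''(\rho) \ll \log m$ (since $L''(\rho) = \sum_{k\ge2}(k-1)\rho^{k-2}/k^2 \asymp \sum_{k\le m} 1/k \asymp \log m$), and $|(L'(z))^2| \le (L'(\rho))^2 \ll (\log m)^2$, so $|f''(z)| \ll (\log m)^2$ on that circle, whence $[z^{m-2}]f'' \le \rho^{-(m-2)} \max |f''| \ll (\log m)^2 \cdot e = O_\varepsilon(m^\varepsilon)$. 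That would only give $O(m^{\varepsilon}/m^2)$, which is weaker than stated — so for the exact $O(1/m^2)$ I do need the boundary argument from the previous paragraph (the radius-$1$ contour).

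**The main obstacle** is precisely the analysis of the boundary behaviour of $f''$ near $z=1$: one must check that the singularity is no worse than a simple $\tfrac1{1-z}$ (with a bounded multiplier), so that the boundary values of $f''$ are in $L^1$ of the unit circle and the Cauchy estimate on $|z|=1$ yields bounded coefficients. Everything else — absolute convergence of the dilogarithm on the closed disc, the identity $m(m-1)[z^m]f = [z^{m-2}]f''$, and the final division by $m(m-1)$ — is routine. One should also record the trivial check that the bound holds for small $m$ (where it is vacuous up to the implied constant), so that the $O$-notation is uniform over all $m\ge 1$.
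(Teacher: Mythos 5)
Your reduction to showing $[z^{m-2}]f''=O(1)$ is fine (it is just a restatement of the claim), but the step you lean on to establish it fails. The function $1/(1-z)$ does \emph{not} have $L^1$ boundary values on the unit circle: $|1-e^{i\theta}|\asymp|\theta|$ near $\theta=0$, so $\int_{-\pi}^{\pi}|1-e^{i\theta}|^{-1}\,d\theta$ diverges. Hence the asserted principle ``an isolated $\frac{1}{1-z}$-type singularity gives $L^1$ boundary values, hence bounded Taylor coefficients'' has a false middle step. Nor can you substitute ``bounded analytic multiplier times $\frac{1}{1-z}$ has bounded coefficients'': for $u$ bounded and analytic on the disc, the coefficients of $u(z)/(1-z)$ are the partial sums $\sum_{j\le m}u_j$, which in general grow (they can be of size $\log m$). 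So the crucial bound $[z^{m-2}]f''=O(1)$ is not justified as written, and your own fallback contour $|z|=1-1/m$ only yields $m^{o(1)}/m^2$, as you note. This is a genuine gap, not a presentational one.

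The argument can be repaired, but only by invoking a feature you never use: $f$ has \emph{nonnegative} coefficients $c_j$ with $\sum_{j\le M}c_j\le f(1)=e^{\pi^2/6}<\infty$ (let $r\to1^-$ in $\sum_{j\le M}c_jr^j\le f(r)$). Writing $f''=(L''+(L')^2)f$ with $L''(z)=\frac{1}{z(1-z)}+\frac{\log(1-z)}{z^2}$, the dominant term $\frac{f(z)}{z(1-z)}$ then has coefficients bounded by $\sum_j c_j$, and the logarithmic pieces are handled by elementary convolution estimates using the same positivity and summability. The paper takes a much more elementary route that sidesteps all boundary analysis: the coefficientwise domination by $\frac1{1-z}$ gives $c_m\le1$; the identity $f'(z)=f(z)\sum_{k\ge1}z^{k-1}/k$ gives the recurrence $c_m=\frac1m\sum_{k=1}^m\frac{c_{m-k}}{k}$; and feeding the bound back into this recurrence three times gives successively $c_m\ll\frac{\log m}{m}$, $c_m\ll\frac{(\log m)^2}{m^2}$, and finally $c_m\ll\frac1{m^2}$. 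If you want to keep your differentiation idea, the honest fix is essentially this positivity-plus-convolution bookkeeping; otherwise the recurrence-and-bootstrap argument is shorter and fully rigorous.
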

\begin{proof}
Write
\[
  h(z) = \exp\(\sum_{k=1}^\infty \frac{z^k}{k^2}\) = \sum_{m=0}^\infty c_m z^m.
\]
Clearly the coefficients of $h(z)$ are bounded by those of
\[
  \exp\(\sum_{k=1}^\infty \frac{z^k}{k}\) = \frac1{1-z} = \sum_{m=0}^\infty z^m,
\]
so at least we know $c_m\leq 1$. Moreover, the identity 
\[
  h'(z) = h(z) \sum_{k=1}^\infty \frac{z^{k-1}}{k}
\]
implies the recurrence
\begin{equation}\label{cmrecurrence}
  c_m = \frac1m \sum_{k=1}^m \frac1k c_{m-k}.
\end{equation}
Inserting the trivial bound $c_{m-k}\leq 1$, we deduce that
\[
  c_m \leq \frac1m \sum_{k=1}^m \frac1k \ll \frac{\log m}{m}.
\]
Now, we can insert the bound $c_{m-k} \ll \log(m-k)/(m-k)$ into~\eqref{cmrecurrence} to find that
\[
  c_m \ll \frac1m \sum_{k=1}^m \frac{\log(m-k)}{k(m-k)} \ll \frac{(\log m)^2}{m^2}.
\]
Using~\eqref{cmrecurrence} one more time we obtain
\[
  c_m \ll \frac1m \sum_{k=1}^m \frac{\log(m-k)^2}{k(m-k)^2} \ll \frac1{m^2}.\qedhere
\]
\end{proof}

\begin{proposition}\label{largenu}
If $\nu$ is a divisor of $n$ in the range $n^{1/2} \leq \nu < n$, then $I(n,\nu) \order n^{-1+\nu/n}$.
\end{proposition}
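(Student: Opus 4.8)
The plan is to prove matching bounds, writing $\mu=n/\nu$ throughout (so $\mu$ is an integer with $2\le\mu\le n^{1/2}$). For the \emph{lower} bound, observe that any $\pi\in\cS_n$ all of whose cycle lengths are divisible by $\mu$ preserves a system of $\nu$ blocks of size $\mu$: cut each cycle of length $\mu\ell$ into $\ell$ consecutive arcs of length $\mu$, obtaining $\sum(\text{cycle length})/\mu=\nu$ blocks that $\pi$ cyclically permutes. Hence, by Lemma~\ref{first-lemma}(a) with $d=\mu$, $I(n,\nu)\ge i(n,(n),(\mu))\gg n^{-1+1/\mu}=n^{-1+\nu/n}$.

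For the \emph{upper} bound I would start from $I(n,\nu)\le\sum_{(d_i)\vdash\nu}i(n,(d_i\mu)_i,(d_i)_i)$, which is~\eqref{Innubound} with $n/\nu=\mu$. Since $\sum_i d_i\mu=n$, a permutation counted by $i(n,(d_i\mu)_i,(d_i)_i)$ splits completely into fixed sets $A_i$ with $\pi|_{A_i}$ having only $d_i$-divisible cycles, so a first-moment count over \emph{unordered} systems of such sets gives $i(n,(d_i\mu)_i,(d_i)_i)\le\bigl(\prod_d m_d!\bigr)^{-1}\prod_d w_d^{m_d}$, where $m_d$ is the number of parts equal to $d$ and $w_d:=f_d(d\mu)/(d\mu)!$, with $f_d$ as in the proof of Lemma~\ref{first-lemma}(a). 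Summing over all partitions of $\nu$,
\[
  I(n,\nu)\ \le\ \ssum{(m_d)_{d\ge1}\\ \sum_d d m_d=\nu}\ \prod_d\frac{w_d^{m_d}}{m_d!}\ =\ [x^\nu]\exp\Bigl(\sum_{d\ge1}w_d x^d\Bigr).
\]
The formula $f_d(d\mu)=(d\mu-1)!\prod_{j=1}^{\mu-1}(1+\tfrac1{jd})$ from the proof of Lemma~\ref{first-lemma}(a) gives $w_d=\tfrac1{d\mu}\prod_{j=1}^{\mu-1}(1+\tfrac1{jd})$, so $w_1=1$, and writing $w_d=\tfrac1{d\mu}+\rho_d$ with $\rho_d\ge0$ we obtain the factorisation $\exp(\sum_{d\ge1}w_dx^d)=(1-x)^{-1/\mu}e^{P(x)}$, where $P(x):=\sum_{d\ge1}\rho_dx^d$.

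The key point — and where Lemma~\ref{L2lem} enters — is that $e^{P(x)}$ has rapidly decaying coefficients. From $e^t-1\le te^t$ one gets $\rho_d=\tfrac1{d\mu}\bigl(\prod_{j=1}^{\mu-1}(1+\tfrac1{jd})-1\bigr)\le H_{\mu-1}e^{H_{\mu-1}/d}/(d^2\mu)$, and since $H_{\mu-1}\le1+\log\mu$ this yields $\rho_d\le C/d^2$ for an absolute constant $C$, uniformly in $d\ge1$ and $\mu\ge2$. Because $\exp$ is coefficientwise monotone on power series with nonnegative coefficients, $[x^m]e^{P(x)}\le[x^m]\exp\bigl(C\sum_k x^k/k^2\bigr)$, and the bootstrap argument in the proof of Lemma~\ref{L2lem} (unaffected by the extra constant $C$) gives $e_m:=[x^m]e^{P(x)}\ll 1/m^2$ for $m\ge1$; also $\sum_m e_m=e^{P(1)}\le e^{C\pi^2/6}\ll1$. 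Now extract the coefficient: since $[x^k](1-x)^{-1/\mu}=\binom{k-1+1/\mu}{k}$,
\[
  I(n,\nu)\ \le\ \sum_{m=0}^{\nu}e_m\binom{\nu-m-1+1/\mu}{\nu-m},
\]
and for $k\ge1$ one has the elementary bounds $\binom{k-1+1/\mu}{k}\le1/\mu$ and (from $\log\bigl(\mu\binom{k-1+1/\mu}{k}\bigr)=\sum_{j=1}^{k-1}\log(1-\tfrac{1-1/\mu}{j+1})\le-(1-1/\mu)(H_k-1)$) also $\binom{k-1+1/\mu}{k}\le(e/\mu)k^{1/\mu-1}$, while the $k=0$ term is $1$. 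Splitting at $m=\nu/2$ and using $\nu=n/\mu$: the terms with $m\le\nu/2$ contribute $\ll\tfrac1\mu\nu^{1/\mu-1}\sum_m e_m\ll\tfrac1\mu\nu^{1/\mu-1}\le n^{-1+1/\mu}$, while the terms with $m>\nu/2$ contribute $\le e_\nu+\tfrac1\mu\sum_{m>\nu/2}e_m\ll\tfrac1{\nu^2}+\tfrac1{\mu\nu}\ll\tfrac1n\le n^{-1+1/\mu}$, the bound $\mu\le n^{1/2}$ being used here to ensure $\nu^{-2}\le n^{-1}$. Hence $I(n,\nu)\ll n^{-1+1/\mu}$, which with the lower bound gives $I(n,\nu)\order n^{-1+\nu/n}$.

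The main obstacle is precisely the uniform control of $e^{P(x)}$ provided by Lemma~\ref{L2lem}: one needs its coefficients to decay like $m^{-2}$ with an \emph{absolute} implied constant, so that the coefficient extraction from $(1-x)^{-1/\mu}e^{P(x)}$ is dominated by bounded $m$ — where the singular factor $(1-x)^{-1/\mu}$ forces the size $\asymp\nu^{-1+1/\mu}$ — and one must then check that the $\mu$-dependence cancels correctly to convert $\nu^{-1+1/\mu}$ into $n^{-1+1/\mu}$. A decay rate weaker than $m^{-2}$ would already fail to give the desired bound for every $\mu\ge3$, so this step is genuinely sharp.
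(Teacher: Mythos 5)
Your argument is correct, and its analytic endgame coincides with the paper's: the paper likewise writes the relevant generating function as $(1-z)^{-1/s}$ (with $s=n/\nu$, your $\mu$) times a factor whose coefficients decay like $m^{-2}$ via Lemma~\ref{L2lem}, extracts the coefficient of $z^\nu$ by convolution, splits the sum, and uses $\nu\ge n^{1/2}$ to absorb the $\nu^{-2}$ term. The genuine difference is upstream: where the paper simply quotes \cite[Theorem~6.3(1)]{dfg08} to bound $I(n,\nu)$ by the coefficient of $z^\nu$ in $\exp\bigl(\sum_{k}\frac{z^k}{s!}\frac1k(\frac1k+1)\cdots(\frac1k+s-1)\bigr)$, you rederive exactly this inequality from \eqref{Innubound} by a first-moment count over unordered systems of fixed sets; indeed your $w_d=f_d(d\mu)/(d\mu)!=\frac1{d\mu}\prod_{j=1}^{\mu-1}(1+\frac1{jd})$ is precisely the coefficient appearing in the quoted theorem. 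This buys a self-contained proof of the proposition at the cost of re-proving the outside input. You also split off the singular factor exactly, writing $w_d=\frac1{d\mu}+\rho_d$ with $\rho_d\ll d^{-2}$, instead of the paper's convexity bound $p(1/k)\le 1+s/k$ (which makes the second factor literally $\exp(\sum_k z^k/k^2)$); the price is that Lemma~\ref{L2lem} must be rerun with an absolute constant $C$ in the exponent. That is fine, but say a word more than ``unaffected'': the seed bound $c_m\le 1$ no longer holds (the comparison series is now $(1-z)^{-C}$), so one either iterates the bootstrap a few more times starting from $c_m\ll_C m^{C-1}$, or bounds the coefficients by the $\lceil C\rceil$-fold convolution of the $C=1$ case, which is again $O(1/m^2)$ since that sequence is summable.

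Two further small slips, both harmless. In the lower bound, consecutive arcs of length $\mu$ along a cycle are not permuted by $\pi$; the correct blocks inside a cycle $(x_0\,x_1\cdots x_{\mu\ell-1})$ are the residue classes $\{x_i: i\equiv j \pmod{\ell}\}$, $0\le j<\ell$. The conclusion, and hence $I(n,\nu)\ge i(n,(n),(\mu))\gg n^{-1+1/\mu}$, is of course correct and is the same observation the paper makes. Second, your chain $\rho_d\le H_{\mu-1}e^{H_{\mu-1}/d}/(d^2\mu)$ does not give an absolute constant at $d=1$ (there the right-hand side is of order $\log\mu$); but $\rho_1=1-1/\mu\le 1$ directly, so the claim $\rho_d\le C/d^2$ for all $d\ge1$, $\mu\ge2$ stands after treating $d=1$ separately.
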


\begin{proof} Set $s = n/\nu$. The lower bound $I(n,\nu) \gg n^{-1+1/s}$ follows trivially from the observation that any permutation all of whose cycle lengths are divisible by $s$ preserves a system of $n/s$ blocks of size $s$, so it suffices to prove the upper bound.

By~\cite[Theorem~6.3(1)]{dfg08}, 
$I(n,\nu)$ is bounded by the coefficient of $z^\nu$ in
\[
  f(z) = \exp\left(\sum_{k=1}^\infty \frac{z^k}{s!} \frac1k \left(\frac{1}{k} + 1\right)\left(\frac{1}{k} + 2\right)\cdots\left(\frac{1}{k} + s-1\right)\right).
\]
Consider for a moment the polynomial
\[
  p(x) = \frac1{(s-1)!} (x+1)(x+2)\cdots(x+s-1).
\]
Clearly, $p$ has nonnegative coefficients, $p(0) = 1$, and $p(1) = s$. In particular, $p$ is a convex function in $[0,1]$, and we deduce that
\[
  p(x) \leq 1 + (s-1)x \leq 1 + sx \quad(0\le x\le 1) .
\]
Inserting $x=1/k$, we find that
\[
  p(1/k) = \frac1{(s-1)!} \(\frac1k + 1\) \cdots \(\frac1k + s-1\) \leq 1 + \frac{s}{k}.
\]
Thus the coefficients of $f(z)$ are bounded by those of
\[
  g(z) = \exp\(\sum_{k=1}^\infty \frac{z^k}{sk} \(1+\frac{s}{k}\)\) = (1-z)^{-1/s} \exp\(\sum_{k=1}^\infty \frac{z^k}{k^2}\).
\]

Now, for $m>0$, the coefficient of $z^m$ in $(1-z)^{-1/s}$ is
\begin{align*}
  (-1)^m \binom{-1/s}{m}
  &= \frac1{m!} \frac1s \(\frac1s + 1\) \cdots \(\frac1s + m - 1\)\\
  &= \frac1{ms} \prod_{j=1}^{m-1} \(1 + \frac1{js}\)\\
  &\order \frac{1}{m^{1-1/s} s},
\end{align*}
(cf.~the calculation in the proof of Lemma~\ref{first-lemma}(a)), while the coefficient of $z^0$ is of course $1$. 
On the other hand, Lemma~\ref{L2lem} implies that the coefficient of $z^m$ in $\exp\(\sum_{k=1}^\infty \frac{z^k}{k^2}\)$ is $O(1/m^2)$, so that
\begin{align*}
  I(n,\nu)
  &\ll \sum_{m=1}^{\nu-1} \frac1{m^{1-1/s} s} \frac1{(\nu-m)^2} + \frac1{\nu^2}\\
  & \order \sum_{m>\nu/2} \frac1{\nu^{1-1/s} s(\nu-m)^2} + \sum_{m\leq \nu/2} \frac1{m^{1-1/s} s \nu^2} + \frac1{\nu^2}\\
  & \order \frac1{\nu^{1-1/s} s} + O\(\frac{\log \nu}{\nu^{2-1/s}s}\) + \frac1{\nu^2}\\
  & \order \frac1{n^{1-1/s}} + \frac{s^2}{n^2}.
\end{align*}
Since $s\leq n^{1/2}$, this implies that $I(n,\nu) \ll n^{-1+1/s}$, as claimed.
\end{proof}

Theorem~\ref{imprimitive} follows immediately from Propositions~\ref{smallnu}, \ref{intermediatenu}, and \ref{largenu}, the bound~\eqref{trivInInnubound}, and the divisor bound, which states that the number of divisors $\nu$ of $n$ is bounded by $n^{O(1/\log\log n)}$.

\begin{remark}\label{lb-rem}
In general, some extra term in our estimate for $I(n)$ is necessary; that is, it is not always true that $I(n) \ll I(n,p)$.  Let $p_1,\ldots,p_k$ be the prime factors of $n$, and consider the set of numbers $m=p_i h$ with $p_i\le \sqrt{n}$, $h\le \frac12 \sqrt{n}$ and $(h,n)=1$.  Such numbers are clearly all distinct. Also, a permutation which is the product of an $m$-cyle and an $(n-m)$-cycle partitions $\{1,\ldots,n\}$ into $n/p_i$ blocks of size $p_i$. The number of such permutations is $\frac{n!}{m(n-m)} \ge \frac{(n-1)!}{m}$, and so we get that
\[
  I(n) \ge \Big( \sum_{p_i\le \sqrt{n}} \frac{1}{p_i} \Big) \Bigg( \ssum{h\le \frac12 \sqrt{n} \\ (h,n)=1} \frac{1}{h} \Bigg) \frac1n.
\]
Now take $n=p_1\cdots p_k$ with $\log n < p_1 < \cdots < p_k < 10\log n$ and $k\order \frac{\log n}{\log\log n}$.  The sum on $p_i$ is $\order \frac{1}{\log\log n}$ and the sum on $h$ is at least
\als{
\sum_{h\le \frac12 \sqrt{n}} \frac{1}{h} - \sum_{i=1}^k \ssum{h\le \frac12\sqrt{n}\\p_i|h}\frac{1}{h} &\ge
\Big( \sum_{h\le \frac12 \sqrt{n}} \frac{1}{h} \Big) \Big( 1 - \sum_{i=1}^h \frac{1}{p_i} \Big) \\
&= \Big( \sum_{h\le \frac12 \sqrt{n}} \frac{1}{h} \Big) \Big( 1 - O\Big( \frac{1}{\log\log n} \Big)\Big) \sim \frac{\log n}{2}.
}
Also, by Theorem~\ref{imprimitive}, $I(n,p_1) \order n^{-1}$.  Hence
\[
 I(n) \gg \frac{\log n}{\log\log n} n^{-1} \gg \frac{\log n}{\log\log n} I(n,p).
\]
\end{remark}


\section{Primitive subgroups}\label{sec:primitive}

We start by recalling the definition of wreath product. The reader may refer to \cite[Chapter 7]{rotman} for more details. Let $D$ and $Q$ be groups with $Q$ acting on some set $\Omega$. Then $Q$ acts on the set of functions $D^\Omega$ via the operation\footnote{Note that there is a typo in the definition of this action in \cite{rotman}.}
\[
q\cdot (d_\omega)_{\omega\in \Omega}:= (d_{q^{-1}\omega})_{\omega\in \Omega} .
\] 
Then we define the wreath product of $D$ and $Q$, denoted by $D\wr Q$, as the semidirect product of $D^\Omega$ and $Q$. More precisely, $D\wr Q=D^\Omega\times Q$ equipped with the operation 
\[
((d_\omega)_{\omega\in \Omega},q) \cdot ((e_\omega)_{\omega\in \Omega},r) 
	:=((d_\omega e_{q^{-1}\omega})_{\omega\in \Omega},qr) .
\]
If $D$ also acts on some set, say $\Lambda$, then $D\wr Q$ acts on $\Lambda^\Omega$ via the operation
\[
((d_\omega)_{\omega\in \Omega},q) \cdot (\lambda_\omega)_{\omega\in\Omega} 
	:= (d_{\omega} \lambda_{q^{-1}\omega})_{\omega\in \Omega} .
\]
(There is also a natural action of $D\wr Q$ on $\Lambda\times\Omega$, defined by $((d_\omega)_{\omega\in \Omega},q) \cdot (\lambda,\tilde{\omega}) 	:= (d_{q\tilde{\omega}} \lambda , q\tilde{\omega})$, but this action is generically imprimitive, so it will not concern us here.) Moreover, this action is faithful if the actions of $D$ on $\Lambda$ and $Q$ on $\Omega$ are so (and $|\Lambda|\geq 2$), in which case $D\wr Q$ can be realized as a subgroup of $\CS_{\Lambda^\Omega}$. In the special case when $D=\CS_a$, $Q=\CS_b$, $\Lambda=\{1,\dots,a\}$ and $\Omega=\{1,\dots,b\}$, we find that $\CS_a\,\wr\CS_b$ is a transitive subgroup of $\CS_{a^b}$. 

\medskip

We need one last definition: given a nontrivial subgroup $G$ of $\cS_n$, the \emph{minimal degree} of $G$ is the smallest number of points moved by a nontrivial element of $G$. Obviously if $1\neq H\leq G$ then the minimal degree of $G$ is at most that of $H$. 

\medskip

We will combine the following two results.

\begin{theorem}[Bovey~\cite{bovey}]\label{bovey}
Let $\alpha\in(0,1)$. If we choose $\pi$ from $\CS_n$ uniformly at random, then the probability that $\pi\neq1$ and $\langle\pi\rangle$ has minimal degree at least $n^\alpha$ is $\le n^{-\alpha+o_\alpha(1)}$.
\end{theorem}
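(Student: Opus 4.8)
The plan is to reduce the statement to a counting problem about cycle types and then apply the Hardy–Ramanujan/Erdős–Turán philosophy: a permutation $\pi\in\cS_n$ generates a cyclic group of large minimal degree precisely when the cycle type of $\pi$ is ``divisor-poor'' in a suitable sense, and such permutations are rare. First I would recall the classical fact (due to Jordan, and used in this form by Bovey) that if $\pi$ has cycle lengths $\ell_1,\dots,\ell_t$ and $d=\gcd$ of some sub-multiset summing to $s$, then $\langle\pi\rangle$ moves a set of size... — more precisely, the minimal degree of $\langle\pi\rangle$ equals $n$ minus the largest $f$ such that $f$ is a subsum of the multiset $\{\ell_1,\dots,\ell_t\}$ with the property that the corresponding cycles can be simultaneously fixed by some power $\pi^j$ with $j\neq 0$. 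Concretely, $\pi^j$ fixes exactly those points lying on cycles whose length divides $j$, so the number of points fixed by $\pi^j$ is $\sum_{\ell_i\mid j}\ell_i$, and the minimal degree is $\min_{j=1}^{\operatorname{lcm}(\ell_i)-1}\bigl(n-\sum_{\ell_i\mid j}\ell_i\bigr)$. Hence $\langle\pi\rangle$ has minimal degree $\geq n^\alpha$ iff for \emph{every} proper divisor-closed selection of the cycles (every $j$ not a multiple of $\operatorname{lcm}$), the omitted cycles have total length $\geq n^\alpha$; equivalently there is no way to partition off a set of $\geq n-n^\alpha$ points as a union of cycles whose lengths share a common multiple $<\operatorname{lcm}$.

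Next I would package this as: either $\pi$ is an $n$-cycle (or more generally $\operatorname{lcm}$ of cycle lengths equals $n$, but then... ), in which case the event holds trivially with $\alpha$-loss only from the count $(n-1)!/n!=1/n$ of $n$-cycles — wait, we need $\leq n^{-\alpha+o(1)}$ and $1/n$ is smaller, so $n$-cycles are harmless — or there is a prime power $q^a$ and a collection of cycles whose lengths are all coprime to $q$ (so that $j=\operatorname{lcm}/q$, say, fixes them) with total length $\geq n-n^\alpha$. Running over the prime $q$, this means: for some prime $q\mid\operatorname{ord}(\pi)$, the cycles of $\pi$ whose length is divisible by $q$ have total length $<n^\alpha$. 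So the bad event (minimal degree $\geq n^\alpha$, $\pi\neq1$) is contained in the event that $\pi$ is an $n$-cycle, together with the event that \emph{for every} prime $q$ dividing the order, the $q$-divisible cycles cover $<n^\alpha$ points — but actually I only need the containment in one direction, so the bad event is contained in $\{\pi$ is an $n$-cycle$\}\cup\bigcap_q\{$sum of $q$-divisible cycle lengths $<n^\alpha\}$, and in particular is contained, for any single prime $q$, in $\{$sum of lengths of $q$-divisible cycles of $\pi$ is $<n^\alpha\}$ (allowing also the $n$-cycle case).

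Then I would estimate, for a well-chosen prime $q\approx\log n$ or so, the probability that the $q$-divisible cycles of a uniform $\pi\in\cS_n$ have total length $<n^\alpha$. By the standard generating-function / Poisson-approximation description of cycle counts (the number of $k$-cycles is approximately Poisson with mean $1/k$, independent across $k$), the total length $L_q:=\sum_{q\mid k} k\,c_k$ of $q$-divisible cycles behaves like a sum of contributions with $\E L_q=\sum_{q\mid k,\,k\leq n}1\approx n/q$, and it is concentrated; the probability that $L_q\leq n^\alpha$ decays like a power of $n$. Making this rigorous is the step I expect to be the main obstacle: one needs a genuine large-deviation bound, valid uniformly, for the event that $\pi$ has \emph{no} long cycle of length divisible by $q$ and very few points on $q$-divisible cycles. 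I would handle it by the Cauchy/contour-integral method on the exponential generating function $\prod_k(1-z^k)^{-1}$ with the $q$-divisible factors damped — i.e. compare $\sum_{n}(\#\{\pi: L_q<n^\alpha\})z^n/n!$ to $\exp(\sum_{q\nmid k}z^k/k)\cdot(\text{small correction})=(1-z)^{-1}(1-z^q)^{1/q}\cdot(\cdots)$, whose coefficients are $\asymp n^{-1/q}\cdot(\cdots)$ — and optimize $q=q(\alpha,n)\to\infty$ slowly so that $n^{-1/q}=n^{-o(1)}$ is absorbed while the $n^\alpha$ shortfall still costs a factor $n^{-\alpha+o(1)}$. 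Finally I would assemble: the number of primes $q$ one ranges over is $n^{o(1)}$, the $n$-cycle term is $O(1/n)$, and each term is $\leq n^{-\alpha+o(1)}$, giving the claimed bound $n^{-\alpha+o_\alpha(1)}$.
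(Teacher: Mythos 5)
First, note that the paper does not prove this statement at all: it is quoted verbatim from Bovey's paper and used as a black box in Section 7, so there is no internal proof to compare with; your proposal has to stand on its own as a proof of Bovey's theorem.

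It does not, because the central reduction is inverted. Your formula for the minimal degree is right: it equals $\min_j\bigl(n-\sum_{\ell_i\mid j}\ell_i\bigr)$ over $j$ with $\pi^j\neq 1$, and taking $j=N/q$ for a prime $q$ dividing the order $N$, the points moved by $\pi^{N/q}$ are exactly those lying on cycles whose length is divisible by $q$ to the \emph{maximal} power occurring among the cycle lengths. Hence the bad event ``minimal degree $\geq n^\alpha$'' forces, for \emph{every} prime $q\mid N$, that these cycles cover \emph{at least} $n^\alpha$ points --- large, not small. Your claimed containment, that the bad event lies inside $\{\pi\ \text{an } n\text{-cycle}\}\cup\bigcap_q\{\text{sum of }q\text{-divisible cycle lengths}<n^\alpha\}$, and in particular inside the single-prime event $\{\text{sum of lengths of }q\text{-divisible cycles}<n^\alpha\}$, is false. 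For a concrete counterexample, let $\pi$ be a product of two cycles of lengths $qp_1$ and $qp_2$ with $p_1\neq p_2$ large primes and both lengths $\geq n^\alpha$: its minimal degree is $\min(qp_1,qp_2)\geq n^\alpha$, yet the $q$-divisible cycles cover all $n$ points. In fact the event you propose to bound essentially implies the \emph{complement} of the bad event: if the $q$-divisible cycles cover fewer than $n^\alpha$ points for some prime $q\mid N$, then $\pi^{N/q}$ is a nontrivial element moving fewer than $n^\alpha$ points, so the minimal degree is $<n^\alpha$. So the whole strategy bounds the wrong event, and what actually has to be shown --- that with probability at most $n^{-\alpha+o(1)}$ a random $\pi$ has, for every prime $q\mid N$, total length $\geq n^\alpha$ on the cycles carrying the top $q$-power --- is not addressed by your sketch and needs a genuinely different argument.

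A secondary problem: even taken on its own terms, the quantitative claim in the last step fails. For a fixed prime $q$, the probability that the $q$-divisible cycles of a uniform $\pi\in\cS_n$ cover fewer than $n^\alpha$ points is governed by the absence of $q$-divisible cycles of length $>n^\alpha$, and is of size roughly $n^{-(1-\alpha)/q+o(1)}$. This \emph{increases} toward $n^{-o(1)}$ as $q\to\infty$ (the event gets easier for larger $q$), and even at $q=2$ it is only about $n^{-(1-\alpha)/2}$, so no choice of $q=q(n)$ can produce the bound $n^{-\alpha+o(1)}$ for $\alpha$ close to $1$.
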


\begin{theorem}[Liebeck--Saxl~\cite{liebecksaxl}]\label{liebecksaxl}
Let $G$ be a primitive subgroup of $\cS_n$ of minimal degree less than $n/3$. Then there are positive integers $m,k,r$ with $m\geq 5$ for which $n=\binom{m}{k}^r$ and $\cA_m^{\times r}\leq G\leq \cS_m\wr \cS_r,$ where $\cS_m$ acts on the $k$-sets of $\{1,\dots,m\}$ and $\cS_m\wr\cS_r$ acts on $r$-tuples of $k$-sets of $\{1,\dots,m\}$.
\end{theorem}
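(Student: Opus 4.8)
This is a classification result, due to Liebeck and Saxl~\cite{liebecksaxl} and extended by Guralnick and Magaard~\cite{guralnickmagaard}, whose proof rests essentially on the classification of finite simple groups; here I only outline the strategy. Write $\mathrm{fpr}(x)=|\mathrm{Fix}(x)|/n$ for the fixed-point ratio of $x$ in the given action, so that the minimal degree of $G$ equals $n\bigl(1-\max_{1\ne x\in G}\mathrm{fpr}(x)\bigr)$; the hypothesis is therefore that some nontrivial $x\in G$ has $\mathrm{fpr}(x)>2/3$. The plan is to invoke the O'Nan--Scott theorem to split primitive $G\le\cS_n$ into its structural types---affine, diagonal, twisted wreath, almost simple, and product action---and to show that all but one type is incompatible with $\mathrm{fpr}(x)>2/3$.

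The affine, diagonal, and twisted wreath types are eliminated by elementary estimates. If $G=V\rtimes G_0$ is affine with $|V|=p^d=n$, then a nontrivial translation is fixed-point-free and any $x\colon v\mapsto gv+w$ with $g\ne1$ fixes at most one coset of $\ker(g-1)$, hence at most $p^{d-1}\le n/2$ points, so $\mathrm{fpr}(x)\le1/2$. If $G$ has socle $T^{\ell}$ (with $T$ nonabelian simple) in a diagonal action, a nontrivial socle element fixes a set of points parametrised by a centraliser $C_T(a)$ when $\ell=2$, and by a smaller set when $\ell\ge3$; since every nontrivial conjugacy class of a simple group has at least two elements, $|C_T(a)|\le|T|/2$, and again $\mathrm{fpr}(x)\le1/2$. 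Elements lying outside the socle, and the twisted wreath type, are treated the same way and only improve these bounds; so none of these types occurs.

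The heart of the matter is the almost simple type, where $T\trianglelefteq G\le\mathrm{Aut}(T)$ for a nonabelian simple group $T$ and $G$ acts on the cosets of a maximal subgroup $M$ with $n=[T:M]$. One must determine all pairs $(T,M)$ possessing a nontrivial element of fixed-point ratio exceeding $2/3$. When $T=\cA_m$ one runs through the description of the maximal subgroups of $\cA_m$ and $\cS_m$ (the O'Nan--Scott theorem for symmetric groups): the intransitive ones give precisely the actions on $k$-subsets, where a transposition (when $G$ contains odd permutations) or a $3$-cycle (in $\cA_m$) fixes all but a small proportion of the $k$-subsets, and this proportion can be made less than $1/3$ by choosing $m$ and $k$ appropriately; the imprimitive and primitive maximal subgroups are checked to give fixed-point ratios comfortably below $2/3$. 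For sporadic $T$ this is a finite verification from character-table data. The genuinely difficult case is $T$ of Lie type: one uses Aschbacher's theorem together with the description of the $\mathcal S$-type maximal subgroups to enumerate the possible $M$, and invokes the known upper bounds on fixed-point ratios of elements of finite simple groups of Lie type in primitive actions---bounds which tend to $0$ as $|T|\to\infty$---to conclude, after finitely many small groups are checked directly, that $\mathrm{fpr}(x)>2/3$ never occurs. The conclusion is that in the almost simple type the hypothesis forces $\mathrm{soc}(G)=\cA_m$ with $m\ge5$ acting on the $k$-subsets of $\{1,\dots,m\}$ for some $k<m/2$, whence $\cA_m\le G\le\cS_m$; this is the case $r=1$ of the theorem.

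It remains to treat the product (wreath) action type: $G\le H\wr\cS_r$ acting on $\Delta^r$ with $n=|\Delta|^r$, where $r\ge2$, $H$ is primitive on $\Delta$ of almost simple or diagonal type, and $\mathrm{soc}(H)^{\times r}=\mathrm{soc}(G)\le G$. An element $\bigl((h_1,\dots,h_r),\tau\bigr)$ with $\tau\ne1$ fixes at most $|\Delta|^{r-1}\le n/5$ points, since within each cycle of $\tau$ all coordinates of a fixed tuple are determined by any one of them, which is then constrained to lie in a fixed-point set in $\Delta$; so such elements cannot have fixed-point ratio above $2/3$. An element $\bigl((h_1,\dots,h_r),1\bigr)$ fixes the fraction $\prod_i\mathrm{fpr}_\Delta(h_i)$ of $\Delta^r$, so to exceed $2/3$ some $h_i\ne1$ must itself satisfy $\mathrm{fpr}_\Delta(h_i)>2/3$; in other words $H$ is primitive on $\Delta$ of minimal degree less than $|\Delta|/3$. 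By the diagonal-type estimate $H$ is not of diagonal type, so $H$ is almost simple, and by the previous paragraph $\mathrm{soc}(H)=\cA_m$ ($m\ge5$) acts on the $k$-subsets of $\{1,\dots,m\}$ with $\cA_m\le H\le\cS_m$ and $|\Delta|=\binom{m}{k}$. Therefore $\mathrm{soc}(G)=\cA_m^{\times r}$, $G\le\cS_m\wr\cS_r$, and $n=\binom{m}{k}^r$, as required. The principal obstacle in this program is, unsurprisingly, the almost simple Lie type subcase: controlling both the maximal subgroups and the fixed-point ratios of the finite simple groups of Lie type is precisely where the classification of finite simple groups is indispensable.
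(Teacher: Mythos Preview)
The paper does not prove this theorem: it is quoted as a black box from Liebeck and Saxl~\cite{liebecksaxl} (with the remark that the constant $1/3$ can be improved to $1/2$ by Guralnick and Magaard~\cite{guralnickmagaard}), and then only its Corollary~\ref{liebecksaxlcor} is used. So there is no ``paper's own proof'' to compare against.

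That said, your sketch is a faithful outline of the actual Liebeck--Saxl argument. The overall architecture---reformulate as a fixed-point-ratio condition $\mathrm{fpr}(x)>2/3$, apply the O'Nan--Scott theorem, eliminate the affine, diagonal, and twisted wreath types by elementary fixed-point counts, reduce the product action type to the almost simple type on the base, and then handle the almost simple type via the classification of finite simple groups and the Aschbacher description of maximal subgroups---is correct and is exactly how the original proof proceeds. Your bound $|\Delta|^{r-1}\le n/5$ in the product action case (using $|\Delta|\ge 5$) and the reduction ``some $h_i$ must have $\mathrm{fpr}_\Delta(h_i)>2/3$'' are both right; the paper in fact reproves the former calculation in the proof of Corollary~\ref{liebecksaxlcor}. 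The diagonal-type estimate is stated a little loosely, but the conclusion $\mathrm{fpr}\le 1/2$ there is standard. As you correctly emphasize, the substantive content---and the dependence on the classification of finite simple groups---lies entirely in the almost simple Lie type subcase, which your sketch appropriately flags as the main difficulty without attempting to reproduce it.
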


In fact the constant $1/3$ in this theorem can be improved to $3/7$, and even to $1/2$ with explicit exceptions: see Guralnick and Magaard~\cite{guralnickmagaard}. However we only need the following corollary.

\begin{corollary}\label{liebecksaxlcor}
Let $G$ be a primitive subgroup of $\cS_n$ of minimal degree at most $n^{1-\eps}$, and assume that $n$ is sufficiently large depending on $\eps$. Then there are positive integers $m,k,r$ with $k,r\ll_\eps 1$ such that
$\cA_m^{\times r}\leq G\leq \cS_m\wr \cS_r,$ with the action described in Theorem~\ref{liebecksaxl}. In particular, one of the following alternatives holds:
\begin{romenumerate}
	\item $G = \cS_n$ or $\cA_n$;
	\item $G\leq \cS_m$, where $\cS_m$ acts on $k$-sets of $\{1,\dots,m\}$, $n=\binom{m}{k}$, and $1<k\ll_\eps 1$; or
	\item $G\leq \cS_m \wr \cS_r$, where $\cS_m \wr \cS_r$ acts on $\{1,\dots,m\}^r$, $n=m^r$, and $1<r\ll_\eps 1$.
\end{romenumerate}
\end{corollary}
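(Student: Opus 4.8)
The plan is to deduce the corollary directly from Theorem~\ref{liebecksaxl} together with a minimal-degree estimate for product actions of wreath products. First, once $n$ is large enough that $n^{1-\eps}<n/3$, a primitive $G\le\cS_n$ of minimal degree $\le n^{1-\eps}$ falls under Theorem~\ref{liebecksaxl}, so there are integers $m\ge5$ and $k,r\ge1$ with $n=\binom mk^r$ and $\cA_m^{\times r}\le G\le\cS_m\wr\cS_r$ acting on $r$-tuples of $k$-subsets of $\{1,\dots,m\}$. Since this action of $\cS_m$ is permutation-isomorphic to the one on $(m-k)$-subsets, I may assume $k\le m/2$. It then remains to prove $k,r\ll_\eps1$, after which the trichotomy is read off by inspection.

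The key step is a lower bound for the minimal degree of $\cS_m\wr\cS_r$ in this action, namely that every nontrivial element moves at least $\binom{m-2}{k-1}\binom mk^{r-1}$ of the $n$ points. I would verify this by cases on the top component of an element $((g_j)_j,\sigma)$. If $\sigma=1$, some $g_{j_0}\in\cS_m$ is nontrivial; picking $a$ with $g_{j_0}(a)=b\ne a$, every tuple whose $j_0$-th coordinate is a $k$-set containing $a$ but not $b$ is moved, and there are exactly $\binom{m-2}{k-1}\binom mk^{r-1}$ such tuples. If $\sigma\ne1$, pick $j_0$ with $\sigma^{-1}(j_0)\ne j_0$; in a fixed tuple the coordinate indexed by $j_0$ is forced to equal $g_{j_0}K_{\sigma^{-1}(j_0)}$, so at most $\binom mk^{r-1}$ tuples are fixed, hence at least $(\binom mk-1)\binom mk^{r-1}\ge\binom{m-2}{k-1}\binom mk^{r-1}$ are moved, using Pascal's identity $\binom mk=\binom{m-2}{k}+2\binom{m-2}{k-1}+\binom{m-2}{k-2}$. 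As $G\le\cS_m\wr\cS_r$, its minimal degree is at least this quantity too.

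Comparing with the hypothesis gives $\binom{m-2}{k-1}\binom mk^{r-1}\le n^{1-\eps}=\binom mk^{r(1-\eps)}$, i.e.\ $\binom{m-2}{k-1}\le\binom mk^{1-r\eps}$. Writing $\binom{m-2}{k-1}=\tfrac{k(m-k)}{m(m-1)}\binom mk\ge\tfrac{k}{2m}\binom mk$ (valid since $k\le m/2$) converts this to $\binom mk^{r\eps}\le 2m/k\le 2m$. Since $\binom mk\ge m$, taking logarithms yields $r\eps\le\log(2m)/\log m=O(1)$, so $r\ll_\eps1$. For $k$ I would use $\binom mk\ge(m/k)^k$: if $k\le m^{1/2}$ then $\binom mk\ge m^{k/2}$, so the inequality forces $r\eps k\ll1$ and hence $k\ll_\eps1$; if instead $k>m^{1/2}$ then $\binom mk\ge m^{cm^{1/2}}$ for an absolute constant $c>0$, which forces $m\ll_\eps1$ and therefore $n=\binom mk^r\ll_\eps1$, contradicting that $n$ is large in terms of $\eps$. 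Thus $k,r\ll_\eps1$.

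Finally, the alternatives are pure bookkeeping. If $k=r=1$ then $n=m$ and $\cA_n\le G\le\cS_n$, giving (i). If $1=k<r$ then $n=m^r$ with $m\ge5$ and $G\le\cS_m\wr\cS_r$ in its natural action, giving (iii). If $r=1<k$ then $n=\binom mk$ and $G\le\cS_m$ on $k$-sets, giving (ii). If $1<k$ and $1<r$, the faithful action of $\cS_m$ on $k$-subsets embeds $\cS_m\wr\cS_r$ into $\cS_{\tilde m}\wr\cS_r$ with $\tilde m=\binom mk\ge10$, acting naturally on $\tilde m^{\,r}=n$ points, so (iii) holds with $\tilde m$ in place of $m$. (If $G=\cS_n$ or $\cA_n$ we are in (i) in any case.) I expect the main obstacle to be the minimal-degree lower bound of the second paragraph—specifically, handling the top component $\sigma\ne1$ correctly; the remaining estimates and the case analysis are routine.
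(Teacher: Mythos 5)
Your proposal is correct and follows essentially the same route as the paper: invoke Theorem~\ref{liebecksaxl} once $n^{1-\eps}<n/3$, lower-bound the minimal degree of the product action of $\cS_m\wr\cS_r$ on $r$-tuples of $k$-sets by splitting into the cases $\sigma=1$ and $\sigma\neq1$, deduce $k,r\ll_\eps 1$, and finish with the same bookkeeping (including replacing $m$ by $\binom{m}{k}$ when $k,r>1$). The only differences are cosmetic: the paper computes the minimal degree exactly (the extremal element being a transposition in one base coordinate), whereas you settle for the lower bound $\binom{m-2}{k-1}\binom{m}{k}^{r-1}$, and you spell out the final deduction of $k,r\ll_\eps 1$ that the paper leaves to the reader.
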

\begin{proof}
Let $\Delta$ be the set of $k$-sets in $\{1,\dots,m\}$. We must show that the minimal degree of $\cS_m\wr\cS_r$ acting on $\Delta^r$ is at least $n^{1-\eps}$ unless $k,r\ll_\eps 1$. Let $g=(\pi_1,\dots,\pi_r;\sigma)\in\cS_m \wr \cS_r$. We note that an $r$-tuple $(A_1,\dots,A_r)\in\Delta^r$ is a fixed point of $g$ if, and only if, 
\eq{fixed-point}{
\pi_j(A_{\sigma^{-1}(j)}) = A_j \quad(1\le j\le r).
}
We separate two cases.

First, suppose that $\sigma\neq 1$. In particular, $\sigma$ has a cycle of length $s>1$, say $(1\cdots s)$. We then find that $g$ respects the decomposition $\Delta^r = \Delta^s \times \Delta^{r-s}$, and $g$ has at most $\binom{m}{k}$ fixed points in its action on $\Delta^s$: if we know $A_1$ and $\pi_1,\dots,\pi_s$, then $A_2,\dots,A_s$ are determined by the relations \eqref{fixed-point}. Thus $g$ has at most
\begin{equation}\label{gnontrivSr}
	\binom{m}{k}\binom{m}{k}^{r-s} \leq \binom{m}{k}^{r-1}
\end{equation}
fixed points in its action on $\Delta^r$.

On the other hand if $\sigma=1$, then $g$ fixes the point $(A_1,\dots,A_r)\in\Delta^r$ if and only if $\pi_i$ fixes $A_i$ for each $i$. Clearly then the greatest number of points are fixed by an element of the form $(\pi_1,1,\dots,1)$ with $\pi_1\neq 1$. Find $x\in\{1,\dots,m\}$ such that $\pi_1(x)\neq x$. Consequently, if $\pi_1$ fixes $A$, then either $x,\pi_1(x)\in A$ or $x,\pi_1(x)\notin A$. We thus find that the number of fixed points of $g$ acting on $\Delta^r$ is at most
\[
	\(\binom{m-2}{k} + \binom{m-2}{k-2}\)\binom{m}{k}^{r-1},
\]
and, as a matter of fact, exactly that if $\pi_1$ is a transposition. By comparing with~\eqref{gnontrivSr}, we see that the greatest number of points are fixed by a transposition in one coordinate in the base, so the minimal degree of $\cS_m\wr\cS_r$ acting on $\Delta^r$ is
\[
	\binom{m}{k}^r - \(\binom{m-2}{k} + \binom{m-2}{k-2}\)\binom{m}{k}^{r-1} 
		= \frac{2k(m-k)}{m(m-1)} \binom{m}{k}^r \geq \frac2{m-1} \binom{m}{k}^r.
\]
This is at least $\binom{m}{k}^{r(1-\eps)}$ unless $k,r\ll_\eps 1$.

The last part of the corollary follows by assigning the case $k=r=1$ to (i), the case $k>1, r=1$ to (ii), and the case $r>1$ to (iii). In the last case we must replace $m$ by $m'=\binom{m}{k}$.
\end{proof}

We need a couple lemmas to help rule out cases (ii) and (iii) of Corollary~\ref{liebecksaxlcor}.

\begin{lemma}\label{caseii}
If $k\geq 2$ then every $\pi \in \cS_m\setminus\{1\}$ has $\gg_k m^{1/2}$ cycles in its action on the set of $k$-sets of $\{1,\dots,m\}$.
\end{lemma}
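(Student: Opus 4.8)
The number of cycles of $\pi$ acting on the $k$-sets of $\{1,\dots,m\}$ equals the number $N$ of orbits of $\langle\pi\rangle$ on such $k$-sets, and the plan is to bound $N$ from below using a single observation: if $C$ is a cycle of $\pi$, then $\pi$ restricts on $C$ to a cyclic rotation, so for a $k$-set $A$ both $|A\cap C|$ and, more precisely, the orbit of $A\cap C$ under that rotation depend only on the $\langle\pi\rangle$-orbit of $A$. Hence $N$ is at least the number of \emph{realizable} tuples $(\text{rotation-orbit of }A\cap C)_{C}$ indexed by the cycles $C$ of $\pi$, which equals $\sum\prod_{C}\omega(|C|,a_C)$, the sum running over all choices of integers $a_C$ with $0\le a_C\le|C|$ and $\sum_C a_C=k$, where $\omega(\ell,a)$ is the number of orbits of a length-$\ell$ rotation acting on the $a$-subsets of an $\ell$-element set. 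Since every such orbit has size dividing $\ell$, we have $\omega(\ell,a)\ge\binom{\ell}{a}/\ell$. In practice I would use only two consequences: placing all the weight on one cycle $C_0$ gives $N\ge\omega(|C_0|,k)\ge\binom{|C_0|}{k}/|C_0|$, and varying which cycles receive an $a_C=1$ produces many distinct realizable tuples.

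Let $L$ be the largest cycle length of $\pi$ and $F$ the number of its fixed points, and assume $m$ is large in terms of $k$. I would split into three exhaustive cases. If $F\ge m^{1/2}$: fix a point $a$ moved by $\pi$, and for each $(k-1)$-subset $S$ of the fixed points form the $k$-set $S\cup\{a\}$; distinct choices of $S$ give distinct intersection patterns with the fixed points, hence lie in distinct orbits, so $N\ge\binom{F}{k-1}\gg_k m^{(k-1)/2}\ge m^{1/2}$. If instead $L\ge m^{1/2}$: choosing a longest cycle $C_0$ gives $N\ge\binom{L}{k}/L\gg_k L^{k-1}\ge m^{(k-1)/2}\ge m^{1/2}$. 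In the remaining case $F<m^{1/2}$ and $L<m^{1/2}$: the number of cycles of length at least $2$ is at least $(m-F)/L>m^{1/2}-1$, so (as $m$ is large) one can pick $k$ distinct such cycles and take one point from each to form a $k$-set $A$; the set of cycles met by $A$ is an orbit invariant, so $N\ge\binom{m^{1/2}-1}{k}\gg_k m^{k/2}\ge m^{1/2}$.

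I do not expect a genuine obstacle: once one observes that cycle-wise intersection data is an orbit invariant, everything reduces to bookkeeping. The only points requiring care are checking that the three cases above are exhaustive (which is immediate from the definitions of $L$ and $F$) and that in the last case there really are at least $k$ nontrivial cycles to select, which is what forces the hypothesis ``$m$ large in terms of $k$''; the finitely many remaining values $m\ge k$ are harmless because then $N\ge1$, which is $\gg_k m^{1/2}$ for a small enough implied constant. It is worth noting that the argument in fact delivers $N\gg_k m^{(k-1)/2}$, the stated exponent $1/2$ being merely the bottleneck at $k=2$.
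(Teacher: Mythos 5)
Your proof is correct and takes essentially the same route as the paper: the paper's dichotomy is ``$\pi$ has at least $m^{1/2}$ cycles'' (distinct choices of $k$ cycles met by a $k$-set give distinct orbits, so $\gg_k m^{k/2}$ orbits) versus ``$\pi$ has a cycle of length $L\ge m^{1/2}$'' (giving $\ge \binom{L}{k}/L \gg_k L^{k-1}$ orbits), which are exactly your third and second cases. Your separate case of many fixed points is subsumed by the paper's first case, since there fixed points are simply counted as cycles of length one, so the extra split (and the requirement of a moved point) is harmless but unnecessary.
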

\begin{proof}
Write $\Omega$ for $\{1,\dots,m\}$ and $\binom{\Omega}{k}$ for the set of $k$-sets of $\Omega$. Either there are at least $m^{1/2}$ disjoint cycles in $\Omega$, or there is a cycle of length at least $m^{1/2}$. In the former case we get at least one cycle in $\binom{\Omega}{k}$ for each choice of $k$ distinct cycles in $\Omega$, so there are at least
\[\binom{\fl{m^{1/2}}}{k} \order_k m^{k/2} \geq m^{1/2}\]
cycles in $\binom{\Omega}{k}$. In the latter case, fix a cycle $C$ in $\Omega$ of length at least $m^{1/2}$. There are $\binom{|C|}{k}$ $k$-sets contained in $C$, and each cycle in $\binom{C}{k}$ has length at most $|C|$, so there are at least 
\[
\frac{1}{|C|}  \binom{|C|}{k} \order_k |C|^{k-1} \geq m^{1/2}
\]
cycles in $\binom{C}{k}$.
\end{proof}

\begin{lemma}\label{caseiii}
If $r\geq2$, then every $g\in \cS_m \wr \cS_r$ which is nontrivial in the $\cS_r$ factor has at least $m/r$ cycles in its action on $\{1,\dots,m\}^r$.
\end{lemma}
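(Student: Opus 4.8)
The plan is to first reduce to the case in which the image $\sigma$ of $g$ in the $\cS_r$-factor is a single cycle, and then to count cycles by an averaging argument. Since $\sigma\neq1$, it has a cycle $C$ of some length $\ell$ with $2\leq\ell\leq r$. The permutation $g$ respects the decomposition $\{1,\dots,m\}^r=\prod_{C'}\{1,\dots,m\}^{C'}$ indexed by the cycles $C'$ of $\sigma$ (because $(gx)_j=\pi_j x_{\sigma^{-1}(j)}$ and $\sigma^{-1}(j)$ lies in the same $\sigma$-cycle as $j$), and the coordinate projection onto the factor indexed by $C$ is $\langle g\rangle$-equivariant and surjective. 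Hence the number of cycles of $g$ on $\{1,\dots,m\}^r$ is at least the number of cycles of the induced permutation $g_C$ on $\{1,\dots,m\}^C\cong\{1,\dots,m\}^\ell$, and since $\ell\leq r$ it suffices to show that whenever $\sigma$ is an $\ell$-cycle ($\ell\geq2$), the permutation $g=(\pi_1,\dots,\pi_\ell;\sigma)$ of $\{1,\dots,m\}^\ell$ has at least $m/\ell$ cycles.

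For this, I would compute $g^\ell$. Since $\sigma^\ell=1$, one has $g^\ell=(\rho_1,\dots,\rho_\ell;1)$, which acts on $\{1,\dots,m\}^\ell$ coordinatewise with $\rho_j$ acting in the $j$th slot, where $\rho_j$ is the product of all the $\pi$'s in the cyclic order determined by $\sigma$; because conjugation by $g$ fixes $g^\ell$ while cyclically permuting the coordinates through the $\ell$-cycle $\sigma$, the $\rho_j$ are all conjugate in $\cS_m$. Write $\rho=\rho_1$ and $D=\ord(\rho)$. Then $\ord(g)=\ell D$, and $|\mathrm{Fix}(g^{\ell k})|=\prod_{j=1}^{\ell}|\mathrm{Fix}(\rho_j^{k})|=|\mathrm{Fix}(\rho^{k})|^{\ell}$ for every $k$. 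Applying Burnside's lemma to $\langle g\rangle$ and keeping only the (nonnegative) terms coming from powers $g^{t}$ with $\ell\mid t$, I get
\[
  \#\{\text{cycles of }g\}=\frac{1}{\ell D}\sum_{t=0}^{\ell D-1}|\mathrm{Fix}(g^{t})|\ \geq\ \frac{1}{\ell D}\sum_{k=0}^{D-1}|\mathrm{Fix}(\rho^{k})|^{\ell}.
\]
Finally, if $L_1,\dots,L_c$ are the cycle lengths of $\rho$, then $\sum_iL_i=m$, $D=\lcm_iL_i$, and $|\mathrm{Fix}(\rho^{k})|=\sum_{i:\,L_i\mid k}L_i$; using $(\sum_ib_i)^{\ell}\geq\sum_ib_i^{\ell}$ for nonnegative $b_i$, followed by $L_i^{\ell-1}\geq L_i$ (valid since $\ell\geq2$), one finds
\[
  \sum_{k=0}^{D-1}|\mathrm{Fix}(\rho^{k})|^{\ell}\ \geq\ \sum_{i=1}^{c}L_i^{\ell}\cdot\frac{D}{L_i}=D\sum_{i=1}^{c}L_i^{\ell-1}\ \geq\ D\sum_{i=1}^{c}L_i=Dm,
\]
so that $g$ has at least $m/\ell\geq m/r$ cycles, as desired.

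The step I expect to require the most care is bounding the Burnside sum. The crude estimate $\#\{\text{cycles of }g\}\geq m^{\ell}/(\ell\,\ord(\rho))$ obtained from the single term $k=0$ is \emph{not} enough, because $\ord(\rho)$ can exceed any fixed power of $m$ (already for $\rho$ of order $35$ in $\cS_{12}$ with $\ell=2$). The point is that one must retain all $D$ terms $|\mathrm{Fix}(\rho^k)|^\ell$ and estimate their sum through the cycle-length identity $|\mathrm{Fix}(\rho^k)|=\sum_{L_i\mid k}L_i$; the elementary inequalities $(\sum b_i)^\ell\geq\sum b_i^\ell$ and $L_i^{\ell-1}\geq L_i$ then do the rest. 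All the remaining ingredients — the reduction to a single cycle, the coordinatewise form of $g^\ell$, the conjugacy of the $\rho_j$, and the value $\ord(g)=\ell D$ — are routine verifications.
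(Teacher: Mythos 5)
Your proof is correct, and its skeleton coincides with the paper's: both arguments restrict attention to a single cycle $C$ of $\sigma$ of length $\ell\ge 2$ (you project onto the $C$-coordinates at the outset, the paper restricts at the end, which is immaterial), both compute $g^\ell=(\rho_1,\dots,\rho_\ell;1)$ acting coordinatewise with the $\rho_j$ conjugate in $\cS_m$, and both in effect show that $g^\ell$ has at least $m$ cycles on $\{1,\dots,m\}^\ell$ and then divide by $\ell$. The one genuine difference is how that count of cycles of $g^\ell$ is obtained. The paper constructs them directly: for each $i$, the tuples all of whose coordinates lie in $i$-cycles of the respective $\rho_j$ split into cycles of $g^\ell$ of length exactly $i$, giving at least $(ic_i)^\ell/i\ge ic_i$ of them, and summing over $i$ yields at least $\sum_i ic_i=m$ cycles — no Burnside, and no reference to $\ord(\rho)$. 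You instead apply Burnside's lemma to $\langle g\rangle$, discard the powers $g^t$ with $\ell\nmid t$, and bound $\sum_{k=0}^{D-1}|\mathrm{Fix}(\rho^k)|^\ell\ge Dm$ via the identity $|\mathrm{Fix}(\rho^k)|=\sum_{L_i\mid k}L_i$, superadditivity of $x\mapsto x^\ell$, and $L_i^{\ell-1}\ge L_i$; all of these steps are valid, as is your claim $\ord(g)=\ell D$. Your route is a bit longer, precisely because it must average over all $D$ powers of $g^\ell$ to neutralize the fact that $D=\ord(\rho)$ can be enormous (a point you correctly flag), whereas the paper's explicit construction of cycles sidesteps the order of $\rho$ entirely; on the other hand, the Burnside computation is systematic and would adapt readily to settings where one wants orbit counts rather than a supply of explicitly exhibited cycles.
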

\begin{proof}
Let $g = (\pi_1,\dots,\pi_r;\sigma)$, where $\sigma\neq 1$. Suppose $(1\cdots s)$ is a cycle of $\sigma$, where $s>1$. Then $g$ acts on $\{1,\dots,m\}^s$, and $g^s$ (the $s$-th power of $g$) acts on $\cS_m\wr\cS_s$ as
\[
	(\pi_1\pi_2\cdots\pi_{s-1}\pi_s,\pi_2\pi_3\cdots\pi_s\pi_1,\dots,\pi_s\pi_1\cdots\pi_{s-2}\pi_{s-1};1).
\]
The coordinates appearing here are conjugate to one another, so they have the same number of cycles of each length $i$, say $c_i$. But if $x_1,\dots,x_s$ are each contained in cycles of length $i$, then $(x_1,\dots,x_s)$ is contained in a cycle of length $i$, so the number of cycles of $g^s$ in $\{1,\dots,m\}^s$ of length $i$ is at least
\[
	\frac{(ic_i)^s}{i} \geq i c_i.
\]
The total number of cycles of $g^s$ in $\{1,\dots,m\}^s$ is thus at least
\[
	\sum_{i=1}^m ic_i = m.
\]
Thus $g$ itself has at least $m/s$ cycles in $\{1,\dots,m\}^s$, and in particular at least $m/s\geq m/r$ cycles in $\{1,\dots,m\}^r$.
\end{proof}

\begin{proof}[Proof of Theorem~\ref{primitive}]
Choose $\pi\in\cS_n$ uniformly at random. By Theorem~\ref{bovey} the probability that $\langle\pi\rangle$ has minimal degree greater than $n^{1-\eps}$ is $O_\eps(n^{-1+2\eps})$, so we may assume that $\langle\pi\rangle$ has minimal degree at most $n^{1-\eps}$. Thus, if $\pi\in G$ and $G$ is primitive, then $G$ also has minimal degree at most $n^{1-\eps}$. Consequently, if $n$ is large enough depending on $\eps$, then $G$ must fall into one of the cases of Corollary~\ref{liebecksaxlcor}. We must rule out cases (ii) and (iii).

Since the number of cycles of a random permutation is approximately Poisson with mean $O(\log n)$, we know that all but at most a proportion $O(n^{-100})$ of $\pi\in\cS_n$ have at most $(\log n)^2$ cycles: see for example~\cite[Lemma~2.2]{EFG2}. Thus, by Lemma~\ref{caseii} we may ignore case (ii). The last case we need to consider is when we can identity $\pi$ with an element $(\pi_1,\dots,\pi_r;\sigma)$ of $\CS_m\wr \CS_r$, acting on $\{1,\dots,m\}^r$. Lemma~\ref{caseiii} then allows us to assume that $\sigma=1$. In this case, though, we find that $\pi$ preserves a system of $m=n^{1/r}$ blocks of size $n^{1-1/r}$, the blocks being the sets $B_a =\{(a,a_2,\dots,a_r): 1\le a_2,\dots,a_r\le m\}$, for $1\le a\le m$. Therefore, Theorem~\ref{imprimitive} implies that the proportion of such $\pi\in\cS_n$ is bounded by
\[
	\sum_{r=2}^{O_\eps(1)} I(n,n^{1/r}) \ll_\eps n^{-1}.\qedhere
\]
\end{proof}


\bibliography{higher-dim}
\bibliographystyle{alpha}

\begin{dajauthors}
\begin{authorinfo}[eber]
  Sean Eberhard\\
  London, UK\\
  eberhard\imagedot{}math\imageat{}gmail\imagedot{}com
\end{authorinfo}
\begin{authorinfo}[ford]
  Kevin Ford\\
  Department of Mathematics\\
  1409 West Green Street\\
  University of Illinois at Urabana--Champaign\\
  Urbana, IL 61801, USA\\
  ford\imageat{}math\imagedot{}uiuc\imagedot{}edu
\end{authorinfo}
\begin{authorinfo}[kouk]
  Dimitris Koukoulopoulos\\
  D\'epartement de math\'ematiques et de statistique\\
  Universit\'e de Montr\'eal\\
  CP 6128 succ. Centre-Ville\\
  Montr\'eal, QC H3C 3J7, Canada\\
  koukoulo\imageat{}dms\imagedot{}umontreal\imagedot{}ca
\end{authorinfo}
\end{dajauthors}
 
\end{document}